\newtheorem{theorem}{Theorem}[section]
\newtheorem{lemma}[theorem]{Lemma}
\newtheorem{prop}[theorem]{Proposition}
\newtheorem{corollary}[theorem]{Corollary}
\theoremstyle{definition}
\newtheorem{conj}[theorem]{Conjecture}
\newtheorem{defn}[theorem]{Definition}
\newtheorem{example}[theorem]{Example}
\newtheorem{remark}[theorem]{Remark}
\newtheorem{question}[theorem]{Question}
\newtheorem{conventions}[theorem]{Conventions}
\numberwithin{equation}{section}
\def\gen{\text{gen}}
\def\Aut{\text{Aut}}
\def\Stab{\text{Stab}}
\def\ad{\text{ad}}
\def\pr{\text{pr}}
\def\Der{\text{Der}}
\def\GL{{\text{GL}}}
\def\Lie{\text{Lie}}
\def\id{\text{id}}
\def\diag{\text{diag}}
\def\Gr{\text{Gr}}
\def\gl{\mathfrak{gl}}
\def\bba{\mathbb A}
\def\bbk{\mathbb K}
\def\bbz{\mathbb Z}
\def\bbf{\mathbb F}
\def\bbp{\mathbb P}
\def\bby{\mathbb Y}
\def\tsb{\textsf{B}}
\def\frakh{\mathfrak{H}}
\def\frakx{\mathfrak{X}}
\def\frakt{\mathfrak{T}}
\def\frakv{\mathfrak{V}}
\def\frako{\mathfrak{O}}
\def\cv{\mathcal {V}}
\def\cb{\mathcal {B}}
\def\cn{\mathcal {N}}
\def\cg{\mathcal {G}}
\def\b0{{\bar 0}}
\def\b1{{\bar 1}}
\def\ttt{{\mathfrak t}}
\def\ggg{{\mathfrak g}}
\def\hhh{{\mathfrak h}}
\def\bbb{{\mathfrak b}}
\begin{document}
\title[Generic Property and Borel subalgbras]{Generic property and conjugacy classes of homogeneous Borel subalgebras of restricted  Lie algebras}

\author{Bin Shu}
\address{Department of Mathematics, East China Normal University, Shanghai 200241, China.}\email{bshu@math.ecnu.edu.cn}


\thanks{MSC (2000 Revision) 17B 50, 17B 05, 17B 20. This work is  supported partially by the NSF of China (No. 11271130; 11671138),  Shanghai Key Laboratory of PMMP}

\begin{abstract} Let $(\ggg,[p])$ be a finite-dimensional restricted Lie algebra over an algebraically closed field $\bbk$
of characteristic $p>0$, and $G$ be the adjoint group of $\ggg$. We say that $\ggg$ satisfying the {\sl generic property} if $\ggg$ admits generic tori introduced in \cite{BFS}. A Borel subalgebra (or Borel for short) of $\ggg$ is by definition  a maximal solvable
 subalgebra containing a maximal torus of $\ggg$, which is further called generic if additionally containing a generic torus.
In this paper, we first settle  a conjecture proposed by Premet in \cite{Pr2}  on regular Cartan subalgebras of restricted Lie algebras. We prove that the statement in the conjecture for a given $\ggg$  is valid if and only if it is the case when $\ggg$ satisfies the generic property.
We then classify
the conjugay classes of homogeneous Borel subalgebras of the restricted simple
Lie algebras $\ggg=W(n)$ under $G$-conjugation when $p>3$, and present
the representatives of these classes. Here $W(n)$ is the so-called Jacobson-Witt algebra, by definition the derivation algebra of the truncated polynomial ring $\bbk[T_1,\cdots,T_n]\slash (T_1^p,\cdots,T_n^p)$. We also describe the closed
connected solvable subgroups of $G$ associated with those
representative Borel subalgebras.
\end{abstract}



\maketitle

\section*{Introduction}
 In the structure theory of classical Lie algebras (which arise from  Lie algebras of connected semi-simple algebraic groups), Borel subalgebras (which arise from  Lie algebras of Borel subgroups)
 along with Cartan subalgebras, Weyl groups, {\sl etc} are very important. These usual structural aspects
 constitute most important parts of the machinery of classical Lie algebras, playing fundamental roles in the structure and representation theory of classical Lie algebras (cf. \cite{Hum}, \cite{Hum-1}, \cite{Jan2}, \cite{Jan2} and \cite{Jan3} {\sl{etc}}).  There the basic nature  is that for a connected semi-simple algebraic group $G$ and $\ggg=\Lie(G)$,  all Borel ({{resp}}. Cartan) subalgebras of $\ggg$ are conjugate under $G$ (note that we can identify  Borel subalgebras with maximal solvable subalgebra containing maximal tori when the ground field is an algebraically closed field of characteristic $\neq 2,3$ (cf.  \cite[\S{14.3}]{Hum-1} and \cite[\S{III.4}]{Seligman})). Especially, the variety of all Borel subalgebras plays some key role in the representations of $\ggg$ (cf. \cite{BMR}).
By contrast, Cartan subalgebras and the maximal solvable subalgebras containing maximal tori, which will be still called {\sl Borel subalgebras} in the present paper,  of arbitrary restricted Lie algebras in prime characteristic, usually are not longer conjugate.
In \cite{Pr1} and \cite{Pr2}, Premet studied systemically Cartan subalgebras of restricted Lie algebras. Regular Cartan subalgebras which are the most important class of Cartan subalgebras, by definition the ones containing a maximal torus of maximal dimension, are proved to be conjugate  by means of a finite number of so-called elementary switchings (some invertible linear transformations provided by root vectors). He further proposed a conjecture on the conjugation situation of regular Cartan subalgebras under the adjoint group $G$ of $\ggg$ (see Conjecture \ref{premetconj}).
As to Borel subalgebras, there is less study on them so far.
Especially, for non-classical restricted simple Lie algebras, we have no answer to the questions how many and what  the conjugacy classes of Borel subalgebras are, and what kind of role  the Borel subalgebras could play in the representations of the Lie algebras (for those Lie algebras, Cartan subalgebras have been well-known (cf. \cite{St}). Our motivation is to understand  more on Borel subalgebras of non-classical restricted simple Lie algebras, and to exploit some possible connection among the theory of Borel subalgebras,  representations and others.

The purpose of the present paper is twofold. One is to settle Premet's conjecture mentioned above, the other one is to study Borel subalgebras of restricted Lie algebras.

According to Block-Wilson-Strade-Premet classification of restricted simple Lie algebras over an algebraically closed field of characteristic $p>3$ (cf. \cite{PS}), it is known that aside from the analogues of the complex simple Lie algebras (called classical Lie algebras) there are usually four additional classes of restricted simple Lie algebras{\footnote{The case when $p=5$ involves the extra ones Melikian algebras.}}, the so-called restricted Lie algebras of Cartan type, among of which the Jacobson-Witt algebras $W(n)$ will be a main topic of the present paper. In his article \cite{Pr3}, Premet studied analogy of Weyl groups and of the Chevalley restriction theorem in the complex simple Lie algebras,  and the variety of nilpotent elements for $W(n)$. Along this direction, Bois, Farnsteiner and the author of the present paper developed some general theory of Weyl groups for restricted Lie algebras, and studied the Weyl groups for the other three classes of Cartan type Lie algebras (cf. \cite{BFS} and \cite{Fa}). They proposed in \cite{BFS} the notion  "generic tori" which plays the same important role as the maximal tori in classical Lie algebras, associated with which Weyl groups and the Chevalley restriction theorem were obtained, with aid of the classification of isomorphism classes of maximal tori in \cite{De1} and \cite{De2}.
Inspired by the work above-mentioned, we begin the study of Borel subalgebras for restricted  Lie algebras. One aim  of the present paper is to present the conjugacy classes of homogenous Borel subalgebras of $W(n)$ (see Definition \ref{homo Borel} for "homogeneous").
Meanwhile, we especially stress on generic Borel subalgebras and the associated solvable subgroups for a non-classical restricted Lie algebra which could be expected to play some  important roles as Borel subalgebras and Borel subgroups do in classical cases. The special feature of generic Borel subalgebras sheds some new light on the further study of geometric aspects and modular representations of non-classical Lie algebras (to see \cite{O}).

Our paper is organized as follows. In the first section, we will give the notion {\sl{generic property}} for a restricted Lie algebra. Then we will prove that for generic Cartan subalgebras (see \S\ref{Generic} for the definition), all of them are conjugate under the adjoint group of $\ggg$ (Theorem \ref{genericCT}). Consequently, it can be shown that the statement for a given restricted Lie algebra in Premet's conjecture on regular Cartan subalgebras holds if and only if it is the case when the restricted Lie algebra satisfies the generic property (Theorem \ref{correctpremetconj}). The notion of generic Borel subalgebras are proposed here.
In Section 2, we recall some knowledge  on $W(n)$, then  propose $\ttt_r$-grading for $W(n)$, associated with the representative  $\ttt_r$   of the $r$th conjugacy class of maximal tori, then we introduce the notion of  homogeneous Borel subalgebras of $W(n)$. Section 3 is devoted to demonstrating all standard Borel subspaces (whose definition will be seen in \S\ref{Borels}) and then to proving that those subspace are Lie subalgebras and maximal solvable ones.  In Section 4, we complete the arguments on classification of isomorphism classes of homogeneous Borel
 subalgebras of $W(n)$ (Theorem \ref{maintheorem}) when the characteristic  $p$ of the ground filed $\bbk$ is bigger than $3$.
In Section 5, we prove that the solvable groups associated with Borel subalgebras are connected, and further give a precise description of the solvable subgroups associated with the generic Borel subalgebras. 

In this paper, we have to use many notations in the arguments. For the readers' convenience, we make a notation list  at the end of the paper.

\section{Generic property and Borel subalgebras}
Throughout, 
all vector spaces are assumed to be finite-dimensional and over an algebraically closed filed $\bbk$ of prime characteristic char$(\bbk)=p>0$ unless mentioned otherwise. Given a restricted Lie algebra $(\ggg,[p])$, we have an {\sl{adjoint group}} $G:=\Aut_p(\ggg)^\circ$, the identity component of its restricted automorphism group.

\subsection{Borel subalgebras}\label{1.1} A maximal solvable subalgebra $\cb$ of a restricted Lie algebra $(\ggg, [p])$ is called a {\sl {Borel subalgebra}} (or {\sl{Borel}} for short) if $\cb$ contains a maximal torus of $\ggg$ (recall that a torus $\ttt$ is by definition an abelian restricted subalgebra consisting of semi-simple elements, i.e.  $X\in (X^{[p]})_p$ for all $X\in\ttt$, where $(X^{[p]})_p$ stands for the restricted subalgebra generated by $X^{[p]}$ (see \cite[\S2.3]{SF})). A Cartan subalgebra is called a regular one if it contains a torus of maximal dimension. 

The following observation is clear.
\begin{lemma} \label{restrictedborel} Let $(\ggg,[p])$ be a restricted Lie algebra over $\bbk$. Then every maximal solvable subalgebra of $\ggg$ is a restricted subalgebra.
\end{lemma}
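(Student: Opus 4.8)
The plan is to show that a maximal solvable subalgebra $\cb$ coincides with its $p$-envelope $\langle\cb\rangle_p$, the smallest restricted subalgebra of $\ggg$ containing $\cb$. Since $\langle\cb\rangle_p$ is by construction closed under the $[p]$-operation, once I know that $\langle\cb\rangle_p$ is again solvable, the maximality of $\cb$ forces $\cb=\langle\cb\rangle_p$, and hence $\cb$ is a restricted subalgebra. Thus everything reduces to the assertion that the $p$-envelope of a solvable subalgebra is solvable.

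To establish this reduction I would first recall the explicit description $\langle\hhh\rangle_p=\hhh+T$, where $T$ is the $\bbk$-span of all iterated $p$-th powers $X^{[p]^i}$ with $X\in\hhh$ and $i\ge 1$; this is a restricted subalgebra because $\ad(X^{[p]^i})=\ad(X)^{p^i}$ preserves $\hhh$ (cf. \cite[\S2.5]{SF}). The heart of the matter is to control the derived algebra. Using $\ad(X^{[p]^i})=\ad(X)^{p^i}$ together with the identity $[X,Y^{[p]}]=-\ad(Y)^p(X)$ and the fact that $[\hhh,\hhh]$ is an $\ad\hhh$-stable ideal of $\hhh$, one checks successively that $[\hhh,T]\subseteq[\hhh,\hhh]$ and then $[T,T]\subseteq[\hhh,\hhh]$. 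Combined with $[\hhh,\hhh]\subseteq\hhh$ this yields the key identity
\begin{equation*}
[\langle\hhh\rangle_p,\langle\hhh\rangle_p]=[\hhh,\hhh],
\end{equation*}
so that the derived series of $\langle\hhh\rangle_p$ agrees with that of $\hhh$ from the first term onward. In particular $\langle\hhh\rangle_p$ is solvable whenever $\hhh$ is. Applying this with $\hhh=\cb$ then finishes the proof: $\langle\cb\rangle_p$ is a solvable restricted subalgebra containing the maximal solvable subalgebra $\cb$, whence $\cb=\langle\cb\rangle_p$ is restricted.

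The step I expect to require the most care is the verification that $\hhh+T$ is genuinely closed under $[p]$ and really equals $\langle\hhh\rangle_p$: since $[p]$ is only $p$-semilinear, the expansion $(A+B)^{[p]}=A^{[p]}+B^{[p]}+\sum_{i=1}^{p-1}s_i(A,B)$ produces correction terms $s_i(A,B)$ that are iterated brackets, and one must confirm that these all land back in $\hhh$, which they do, being elements of $[\langle\hhh\rangle_p,\langle\hhh\rangle_p]=[\hhh,\hhh]\subseteq\hhh$. This is the standard $p$-envelope bookkeeping, and I would either carry it out directly or quote it from \cite{SF}; the representation-theoretic input $\ad(X^{[p]})=\ad(X)^p$ is precisely what makes all of these correction terms manageable and is the real engine behind the derived-algebra identity.
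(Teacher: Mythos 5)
Your proposal is correct and follows essentially the same route as the paper: pass to the $p$-envelope $\langle\cb\rangle_p$, note it is still solvable, and invoke maximality to conclude $\cb=\langle\cb\rangle_p$. The only difference is that the paper simply cites the solvability of the $p$-envelope as known (cf.\ \cite{SF}), whereas you supply the standard verification via $[\langle\hhh\rangle_p,\langle\hhh\rangle_p]=[\hhh,\hhh]$, which is accurate but not a new argument.
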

\begin{proof} Let $\cb$ be a given maximal solvable subalgebra of $\ggg$.
Note that the restricted subalgebra $\langle\cb\rangle_p$ generated by $\cb$ is still  solvable. By the maximal solvableness of Borel subalgebras, $\cb$ coincides with $\langle\cb\rangle_p$. Hence $\cb$ itself is restricted.
\end{proof}

\subsection{Generic property and Premet's conjecture}\label{Generic}
 For a restricted Lie algebra $(\ggg,[p])$, denote by $\mu(\ggg)$ the maximal dimension of all tori $\ttt\subset \ggg$. Following Premet, we say that $\hhh$ is a regular Cartan subalgebra of $\ggg$ if this Cartan subalgebra contains a maxiaml torus of dimensin $\mu(\ggg)$ (cf. \cite{Pr2}). The dimension of any regular Cartan subalgebra coincides with $r(\ggg)$ the rank of $\ggg$ (cf. \cite[Theorem 1]{Pr1}). For a given regular Cartan subalgebra $\hhh$ with a maximal torus  $\ttt$, one has $\ggg=\hhh\oplus \sum_{\alpha\in \Delta(\ggg,\hhh)} \ggg_\alpha$,  a root-space decomposition of $\ggg$ with respect to $\hhh$, where $\Delta(\ggg,\hhh)$ is the corresponding root system.  Then, any two regular Cartan subalgebras in $\ggg$ can be obtained from each other by means of a finite number of elementary switchings defined via the root-space decompositions mentioned above (cf. \cite[Theorem 1]{Pr2}).

 A torus $\ttt_\gen$ of $\dim\ttt_\gen=\mu(\ggg)$  is called generic if  $G\cdot\ttt_\gen$ is a dense subset of $\overline {\textsf{S}}_\ggg$, where $\textsf{S}_\ggg$ stands for the Zariski closure of all semisimple elements in $\ggg$ (cf. \cite[\S1]{BFS}). Set $\hhh_\gen:=C_\ggg(\ttt_\gen)$,  the centralizer of $\ttt_\gen$ in $\ggg$. By \cite[Theorem 2.4.1]{SF}, $\hhh_\gen$ is a regular Cartan subalgebra of $\ggg$, which is called a generic Cartan subalgebra.
A Borel subalgebra $\cb$ of $\ggg$ is called generic if it contains a generic torus of $\ggg$.

Recall that $X\in\ggg$ is called regular if the Fitting-nilspace $\ggg^0(\ad X):=\{v\in \ggg\mid \ad X^{m(X, v)}v=0 \mbox{ for some postive integer }m(X,v)\}$ has the minimal dimension among all Fitting-nilspaces when $X$ runs over $\ggg$ (cf. \cite{Pr1} and \cite{Pr2}). 
 Denote by $\mbox{Reg}(\ggg)$ the set of regular elements of $\ggg$. Then $\mbox{Reg}(\ggg)$ is an open dense subset of $\ggg$.
A regular element $X$ in $\ggg$ is called generic if it happens that $G\cdot \ggg^0(\ad X)$ is a dense subset of $\ggg$. It is worthwhile reminding that it is not necessary for a restricted Lie algebra to contain generic elements. For the criterion of the existence of generic elements, we have the following lemma by the definitions and \cite[Proposition 1.7]{BFS}.

\begin{lemma} ({\bf{Definition}}) \label{genericlemma} Let $(\ggg,[p])$ be a restricted Lie algebra. We say that  $\ggg$ satisfies the generic property if $\ggg$ admits generic tori. The generic property is satisfied if and only if one of the following conditions is done.
\begin{itemize}
\item[(1)] There are generic Borel subalgebras in $\ggg$.
\item[(2)] There are generic Cartan subalgebras in $\ggg$.
\item[(3)] There are generic elements in $\ggg$.
\end{itemize}
\end{lemma}

\begin{example} \label{examplegen}
 (1) For a classical Lie algebra $\ggg$,  all regular elements are generic because the saturation $G\cdot \hhh$ is a dense subset of $\ggg$ for any Cartan subalgbra $\hhh$  (cf. \cite[Theorem III 4.1]{Seligman}). According to \cite[Proposition 3.3]{BFS}, restricted simple Lie algebras of type $W, S$ and $H$  have generic elements. So these Lie algebras  satisfy the generic property. However, the restricted contact Lie algebras do not satisfy the generic property (cf. \cite[Theorem 6.6]{BFS}).

(2) One can easily give a description of all generic elements for the Jacobson-Witt algebra $W(n)$. 
\end{example}

If $\hhh_\gen$ is a generic Cartan subalgebra of $\ggg$ containing the generic torus $\ttt_\gen$, then it follows from \cite[Lemma 3.2]{Fa} that there exists an open dense subset $U$ in $\ttt_\gen$ such that all elements of $U$ are generic. Furthermore, we will see  in the forthcoming Corollary \ref{correctpremetconj} that in such a case, there exists an open dense subset $V$ in $\ggg$ such that all elements of $V$ are generic.

\begin{theorem}\label{genericCT} Let $(\ggg, [p])$ be a restricted Lie algebra  satisfying the generic  property.
Then all generic Cartan subalgebras (resp. generic tori) are conjugate under $G$.
\end{theorem}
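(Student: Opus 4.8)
The plan is to prove the statement for generic Cartan subalgebras and then to read off the case of generic tori via the correspondence $\ttt_\gen\leftrightarrow\hhh_\gen=C_\ggg(\ttt_\gen)$. Since a generic torus $\ttt_\gen$ has dimension $\mu(\ggg)$ it is a maximal torus of $\ggg$, hence the unique maximal torus of the nilpotent algebra $\hhh_\gen$; thus each generic Cartan subalgebra determines its generic torus and conversely, and any single $g\in G$ with $g\cdot\hhh_1=\hhh_2$ automatically carries the maximal torus of $\hhh_1$ to that of $\hhh_2$, i.e. conjugates the corresponding generic tori. So it suffices to conjugate the Cartan subalgebras.

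First I would record two elementary facts about the $G$-action. Since $G=\Aut_p(\ggg)^\circ$ acts by restricted automorphisms, $\ad(g\cdot X)=g\circ\ad X\circ g^{-1}$, so $g\cdot\ggg^0(\ad X)=\ggg^0(\ad(g\cdot X))$ and the dimension of the Fitting nullspace is $G$-invariant; in particular $\mathrm{Reg}(\ggg)$ is $G$-stable. Next, for a regular element $h$ lying in a regular Cartan subalgebra $\hhh$ one has $\ggg^0(\ad h)=\hhh$: nilpotency of $\hhh$ gives $\hhh\subseteq\ggg^0(\ad h)$, while both spaces have dimension $r(\ggg)$, whence equality.

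The core step is a density argument. Given a generic Cartan subalgebra $\hhh_\gen\supset\ttt_\gen$, the comment preceding the theorem (via \cite[Lemma 3.2]{Fa}) furnishes a generic element $X\in\ttt_\gen$; as $X$ is semisimple and generic inside the torus, $\ggg^0(\ad X)=C_\ggg(X)=C_\ggg(\ttt_\gen)=\hhh_\gen$, and genericity of $X$ says precisely that $G\cdot\hhh_\gen=G\cdot\ggg^0(\ad X)$ is dense in $\ggg$. Now let $\hhh_1,\hhh_2$ be two generic Cartan subalgebras. Both $G\cdot\hhh_1$ and $G\cdot\hhh_2$ are dense constructible subsets of the irreducible variety $\ggg$, hence contain dense open subsets; intersecting these with the open dense set $\mathrm{Reg}(\ggg)$ yields a regular element $Y\in(G\cdot\hhh_1)\cap(G\cdot\hhh_2)$. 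Writing $Y=g_i\cdot h_i$ with $h_i\in\hhh_i$ and $g_i\in G$, the regularity of $Y$ forces each $h_i=g_i^{-1}\cdot Y$ to be regular, and the two facts above give $\hhh_i=\ggg^0(\ad h_i)=g_i^{-1}\cdot\ggg^0(\ad Y)$, i.e. $g_i\cdot\hhh_i=\ggg^0(\ad Y)$ for $i=1,2$. Hence $g_1\cdot\hhh_1=g_2\cdot\hhh_2$ and $\hhh_1=(g_1^{-1}g_2)\cdot\hhh_2$, as desired.

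The two delicate inputs feeding this argument are the existence of a generic element inside $\ttt_\gen$ whose Fitting nullspace is exactly $\hhh_\gen$ (so that $G\cdot\hhh_\gen$ is genuinely dense in $\ggg$, not merely in $\overline{\tsss}_\ggg$), and the selection of a common regular point of the two dense orbit-sets; both reduce to openness and density of $\mathrm{Reg}(\ggg)$ together with irreducibility of $\ggg$. I expect the main obstacle to be the bookkeeping around the identity $\ggg^0(\ad h)=\hhh$ for regular $h\in\hhh$, which hinges on $\hhh_\gen$ being a \emph{regular} Cartan subalgebra of dimension $r(\ggg)$, a fact already recorded in the excerpt; once this is in place the conjugacy falls out of the equivariance $g\cdot\ggg^0(\ad X)=\ggg^0(\ad(g\cdot X))$.
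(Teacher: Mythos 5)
Your argument is correct, and its endgame coincides with the paper's: both proofs finish by choosing a regular element $Y$ in $(G\cdot \hhh_1)\cap (G\cdot \hhh_2)\cap \mbox{Reg}(\ggg)$ and invoking the uniqueness of the regular Cartan subalgebra through a regular element (your explicit mechanism --- $\hhh_i\subseteq \ggg^0(\ad h_i)$ by nilpotency of $\hhh_i$, equality by the dimension count $r(\ggg)$, then the equivariance $g\cdot\ggg^0(\ad X)=\ggg^0(\ad (g\cdot X))$ --- is exactly what the paper's phrase ``there is a unique regular Cartan subalgebra containing $H$'' abbreviates). Where you genuinely diverge is the middle step, establishing that $G\cdot\hhh_\gen$ is dense and constructible in $\ggg$. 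The paper builds this from scratch: it forms the incidence variety $\frakv=\{(X,\varphi)\in\ggg\times\frakx_{\ttt_\gen}(\bbk)\mid X\in C_\ggg(\varphi(\ttt_\gen))\}$ over the scheme of embeddings of tori, uses its irreducibility and the dominance of the projection $\pi_\ggg$, takes the open orbit $G\cdot\varphi_\gen$ supplied by \cite[Proposition 1.4]{BFS}, and applies Chevalley's theorem to the open set $\frako$ to get $\pi_\ggg(\frako)=G\cdot\hhh_\gen$ constructible with dense interior. You instead import the remark preceding the theorem (via \cite[Lemma 3.2]{Fa}) to obtain a generic element $X\in\ttt_\gen$, verify $\ggg^0(\ad X)=C_\ggg(X)=C_\ggg(\ttt_\gen)=\hhh_\gen$ (containment plus the dimension count, as you note), and read density straight off the definition of a generic element; constructibility then comes from Chevalley applied to the elementary action morphism $G\times\hhh_i\rightarrow\ggg$ --- a step you should state explicitly, since density alone does not force two dense subsets of an irreducible variety to meet, and constructibility is precisely what yields the dense open subsets you intersect. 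Your route is shorter and more elementary on the page, at the cost of outsourcing the geometric content to \cite[Lemma 3.2]{Fa} (equivalently \cite[Proposition 1.7]{BFS}), which is essentially what the paper's incidence-variety computation re-proves in situ; the paper's version is correspondingly more self-contained. Your reduction of the statement for generic tori to that for generic Cartan subalgebras, via the uniqueness of the maximal torus of the nilpotent algebra $\hhh_\gen$ as its set of semisimple elements, is the same as the paper's appeal to \cite[2.4.1 and 2.4.2]{SF}.
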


\begin{proof} For any given two generic tori Cartan subalgebras $\hhh_1$ and $\hhh_2$,  by the definition $G\cdot\hhh_i$, $i=1,2$ are two dense subsets of $\ggg$. Note that both $G\cdot \hhh_i$ are constructible, thereby $G\cdot \hhh_i$, $i=1,2$, contain open dense subsets of $\ggg$ respectively (cf. (cf. \cite[II.\S3]{Ha} or \cite[4.4]{Hum}). 
On the other hand, the set $\text{Reg}(\ggg)$ of regular elements of $\ggg$ is also an open dense subset of $\ggg$, which is $G$-stable. So we have a fact that
$(G\cdot \hhh_1)\cap (G\cdot \hhh_2)\cap \text{Reg}(\ggg)$ is nonempty. Take an element $H$ from the above nonempty. Then there is a unique regular Cartan subalgebra $\hhh$ containing $H$. Hence $\hhh$ coincides with $g_1\cdot\hhh_1$ and $\ggg_2\cdot\hhh_2$ for some $g_1,g_2\in G$. Thus $\hhh_1$ and $\hhh_2$ are conjugate. We complete the proof.
\end{proof}

In \cite[Conjecture 2]{Pr2}, Premet proposed a conjecture:
\begin{conj}(Premet conjecture) \label{premetconj}
For a finite-dimensional restricted Lie algebra $\ggg$, there exists a nonempty Zariski open subset V consisting of regular elements and such that for any $u,v\in V$ the Cartan subalgebras $\ggg^0(\ad u)$ and $\ggg^0(\ad v)$ are conjugate under $G$.
\end{conj}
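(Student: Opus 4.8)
The plan is to take for $V$ the locus of generic elements and to deduce the required $G$-conjugacy directly from Theorem \ref{genericCT}. Recall that for a regular $u\in\ggg$ with Jordan--Chevalley--Seligman decomposition $u=u_s+u_n$ one has $\ggg^0(\ad u)=C_\ggg(u_s)$, a regular Cartan subalgebra, and that $u$ is generic precisely when $G\cdot\ggg^0(\ad u)$ is dense in $\ggg$, i.e. when $\ggg^0(\ad u)$ is a generic Cartan subalgebra. Setting
\[
V:=\{u\in\ggg\mid u\text{ is generic}\},
\]
any two $u,v\in V$ give generic Cartan subalgebras $\ggg^0(\ad u)$ and $\ggg^0(\ad v)$, which are $G$-conjugate by Theorem \ref{genericCT}. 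Since generic elements are regular by definition, $V\subseteq\mbox{Reg}(\ggg)$ automatically. Thus the conjugacy assertion is immediate once $V$ is known to be nonempty and Zariski-open.

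To produce $V$, I would first fix a maximal torus $\ttt$ with $\dim\ttt=\mu(\ggg)$ and suppose (this is the crux, see below) that a generic torus $\ttt_\gen$ exists. Then \cite[Lemma 3.2]{Fa} furnishes an open dense set $U\subseteq\ttt_\gen$ of generic elements, while the proof of Theorem \ref{genericCT} shows $\overline{G\cdot\hhh_\gen}=\ggg$ for $\hhh_\gen=C_\ggg(\ttt_\gen)$. Hence the $G$-stable constructible set $G\cdot U$ is dense in $\ggg$ and, being constructible, contains a nonempty open dense subset; this subset is the desired $V$. Genericity being a $G$-invariant condition, $V$ is automatically $G$-stable, which is consistent with the $G$-conjugacy just established.

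The entire difficulty is thus concentrated in the first step of the previous paragraph: the unconditional existence of a generic torus (equivalently, by Lemma \ref{genericlemma}, of a generic element, Cartan subalgebra, or Borel) in an arbitrary finite-dimensional restricted Lie algebra. Concretely one must exhibit a torus $\ttt_\gen$, $\dim\ttt_\gen=\mu(\ggg)$, whose $G$-orbit is dense in the semisimple locus $\overline{\tsss}_\ggg$. I would attack this through the embedding scheme $\frakt_\ggg$ and its distinguished irreducible component $\frakx_\ttt(\bbk)$ from the proof of Theorem \ref{genericCT}: since $\dim\frakx_\ttt(\bbk)=\dim\ggg-r(\ggg)$ and $\pi_\ggg\colon\frakv\to\ggg$ is dominant, a dimension count weighing $\dim G$ against a generic stabilizer, or a case-by-case appeal to the classification of maximal tori \cite{De1}, would single out a dense $G$-orbit on $\frakx_\ttt(\bbk)$ and hence a generic torus. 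I expect this to be genuinely hard in full generality---there is no a priori reason for $G$ to act with a dense orbit, and this is exactly the point at which Premet's statement resists a uniform proof. For the Jacobson--Witt algebras $W(n)$ studied in the later sections the requisite generic torus can be written down explicitly, which is what renders the conjecture accessible in that case.
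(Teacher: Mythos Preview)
The statement you are attempting to prove is a \emph{conjecture}, not a theorem; the paper does not prove it, and in fact the text immediately following Corollary~\ref{correctpremetconj} points out that it is \emph{false} in general: for restricted simple Lie algebras of contact type $K$ there are no generic elements (\cite[Theorem 6.6]{BFS}), so by Corollary~\ref{correctpremetconj} the Premet conjecture fails for them.

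Your reduction is nonetheless essentially the content of the implication $(2)\Rightarrow(1)$ in Corollary~\ref{correctpremetconj}: once a generic Cartan subalgebra $\hhh_\gen$ exists, $G\cdot\hhh_\gen$ is constructible and dense, its intersection with $\mbox{Reg}(\ggg)$ contains a nonempty open set $V$, and on $V$ all the Fitting-null Cartan subalgebras are generic and hence $G$-conjugate by Theorem~\ref{genericCT}. Where your proposal goes wrong is in the final step, where you propose to establish the existence of a generic torus unconditionally via a dimension count on $\frakx_\ttt(\bbk)$ or a dense $G$-orbit argument. This cannot succeed: the contact algebras give explicit counterexamples where no maximal torus has a $G$-orbit dense in $\overline{\tsss}_\ggg$, so no amount of dimension bookkeeping will produce one. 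In short, you have correctly located the obstruction (existence of generic elements) and correctly shown it suffices, but that obstruction is a genuine one and the conjecture as stated does not hold.
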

From the above theorem, we can prove that the statement in Premet's conjecture for a given $\ggg$ is valid if and only if $\ggg$ satisfies the generic property.

\begin{theorem} \label{correctpremetconj} Let $(\ggg,[p])$ be a restricted Lie algebra, and $G$ be the adjoint group of $\ggg$. Then the following statements are equivalent.
\begin{itemize}
\item[(1)] There exists a nonempty Zariski open subset V consisting of regular elements and such that for any $u,v\in V$ the Cartan subalgebras $\ggg^0(\ad u)$ and $\ggg^0(\ad v)$ are conjugate under $G$.
\item[(2)] The Lie algebra $\ggg$ satisfies the generic property.
\end{itemize}
\end{theorem}
\begin{proof} $(2) \Rightarrow (1) $: Suppose that $\ggg$ satisfies  the generic property. Then $\ggg$ has a generic Cartan subalgebra $\hhh_\gen$ (cf. Lemma \ref{genericlemma}). By the definition, $G\cdot \hhh_\gen$ is dense in $\ggg$, thereby a constructible subset of $\ggg$. This means that $G\cdot\hhh_\gen$ contains a open dense subset $U$. Denote $V=U\cap\mbox{Reg}(\ggg)$. Then $V$ is non-empty and an open dense subset of $\ggg$. For any $v\in V$, $\ggg^0(\ad v)$ is $G$-conjugate to $\ggg^0(\ad v_0)$ for $v_0\in \hhh_\gen\cap \mbox{Reg}(\ggg)$ with $v=g\cdot v_0$ for $g\in G$. From the uniqueness of the regular Cartan subalgebra containing $v_0$ it follows that $\ggg^0(\ad v_0)$ coincides with $\hhh_\gen$. Thus $\ggg^0(\ad v)$ is also a generic Cartan subalgebra. Thanks to  Theorem \ref{genericCT},  the Cartan subalgebra $\ggg^0(\ad v)$ and $\ggg^0(\ad u)$ are conjugate under $G$ for $u,v\in V$. The statement (1) follows.

$(1)\Rightarrow (2)$: Suppose that $V$ is a nonempty Zariski open subset of $\ggg$ consisting of regular elements,  and that the regular Cartan subalgebras $\ggg^0(\ad v)$ for all $v\in V$ are conjugate under $G$. Note that  $v\in \ggg^0(\ad v)$. Hence for any given $v\in V$, $\overline {G\cdot \ggg^0(\ad v)}\supset \overline V=\ggg$. Thus, the Cartan subalgebra $\hhh:=\ggg^0(\ad v)$ is generic (in particular, $v$ is a generic element of $\ggg$). The statement (2) follows.
\end{proof}

As said in Example \ref{examplegen}, restricted simple Lie algebras of type $W, S$ and $H$ satisfy the generic property. Hence the statement in Premet's conjecture holds for them. However,  the restricted contact simple Lie algebras does not satisfy the generic property. So the contact restricted simple Lie algebras are examples showing the failure of Premet's conjecture.

In view of Theorem \ref{genericCT}, we propose the following question.

\begin{remark} Let $(\ggg,[p])$ be a restricted Lie algebra satisfying the generic property. When are all generic Borel subalgebras  conjugate under the adjoint group $G$?

 In view of known examples, we hope that for a restricted Lie algebra $\ggg$ satisfying  the generic property, then all generic Borel subalgebras of maximal dimension are conjugate under $G$.
\end{remark}

\section{Automorphisms and standard maximal tori of  the restricted simple Lie algebra $W(n)$}
From now on, we assume that $\bbk$ is an algebraically closed field of characteristic $p\geqslant 3$.
\subsection{}\label{firstnotations}
Set $\bbp=\{0,1\cdots,p-1\}$. For an element ${\bf{a}}=(a_1,\cdots,a_n)\in \bbp^n$, we denote $|{\bf{a}}|:=a_1+\cdots+a_n$.
Define a truncated polynomial algebra $A(n)$ to be the quotient of the polynomial ring $\bbk[T_1,\cdots,T_n]$ by the ideal generated by $T_1^p,\cdots,
T_n^p$. Set $x_i$ to be the image of $T_i$ in the quotient. Then $A(n)=\sum_{{\bf{a}}\in\bbp^n}\bbk{\bf{x}}^{\bf{a}}$, where ${\bf{x}}^{\bf{a}}=x_1^{a_1}\cdots x_n^{a_n}$ with ${\bf{a}}=(a_1,\cdots,a_n)\in \bbp^n$. We sometimes write $A(n)$ as $\bbk[x_1,\cdots,x_n]$ to emphasize  those indeterminants, and naturally regard $A(n)=\bigoplus_{i=0}^{n(p-1)} A_{[i]}$ as a $\bbz$-graded algebra, where $A_{[i]}$ is  spanned by monomials ${\bf{x}}^{\bf{a}}=x_1^{a_1}\cdots x_n^{a_n}$, $|{\bf{a}}|=i$. The Jacobson-Witt algebra $W(n)$ is defined to be the derivation algebra of $A(n)$.
This is to say, $W(n)$ consists of all linear transformations $D$ of $A(n)$ satisfying $D(fg)=D(f)g+fD(g)$ for  $f,g\in A(n)$. It is easily seen that $W(n)$ is a free $A(n)$-module of rank $n$ with basis $\partial_i$, $i=1,\cdots,n$,
$$W(n)=\sum_{i=1}^n A(n)\partial_i.$$
 Here $\partial_i$ is the image of ${\partial\over \partial T_i}$ in the quotient of the $\bbk[T_1,\cdots,T_n]$-module $\Der(\bbk[T_1,\cdots,T_n])$ by the submodule arising from the ideal  generated by $T_i^p,i=1,\cdots,n$. Hence
 \begin{align*}
\partial_i {\bf{x}}^{\bf{a}}=a_i{\bf{x}}^{{\bf{a}}-\epsilon_i}
\end{align*}
where ${\bf{\epsilon}}_i=(\delta_{i,1},\cdots,\delta_{i,n})\in \bbp^n$, $\delta_{i,j}=1$ if $i=j$, and $\delta_{i,j}=0$ otherwise.

 Set $\ggg=W(n)$. Then $\ggg$ is a $\bbz$-graded restricted simple Lie algebra. The $\bbz$-grading of $W(n)$ arises from the one of the truncated polynomial ring  $\bbk[T_1,\cdots,T_n]$. More precisely, $\ggg=\sum_{i}\ggg_{[i]}$ with $\ggg_{[i]}=\sum_{j=1}^n A_{[i+1]}\partial_j$ and the following properties  hold:
 \begin{align}\label{standardgrading}
 \ggg=\bigoplus_{i=-1}^h\ggg_{[i]}, [\ggg_{[i]},\ggg_{[j]}]\subset \ggg_{[i+j]},\ggg^{[p]}_{[i]}\subset \ggg_{[pi]}
 \end{align}
where $h=n(p-1)-1$, and we set $\ggg_{[i]}:=0$ if $i$ is not between $-1$ and $h$.
Associated with such a grading, one has a filtration
\begin{align}\label{filtation}
\ggg=\ggg_{-1}\supset \ggg_{0}\supset \cdots \supset \ggg_{h}\supset 0
\end{align}
 for $\ggg_i=\sum_{j=i}^h\ggg_{[j]}$, $i=-1,0,1,\cdots,h$.
For more details on $W(n)$, the readers can be referred to \cite[Ch.4]{SF}.

\subsection{Automorphisms of $W(n)$} \label{Automorphisms} Recall that an automorphism $\varphi\in\Aut(A(n))$ induces an automorphism $\overline\varphi$ of $W(n)$ defined via $\overline\varphi:D\mapsto \varphi\circ D\circ \varphi^{-1}$. The inducing correspondence gives rise to a group isomorphism from $\Aut(A(n))$ to $\Aut(W(n))$ (cf. \cite{Wilson}). As to the former, an automorphism is determined by its action on the generators $x_i$, $i=1,\cdots,n$. We have a criterion that an algebra endomorphism $\varphi$ of $A(n)$ is an automorphism if and only if $\varphi$ stabilizes the unique maximal ideal of $A(n)$ and  the determinant
$\det((\partial_i\varphi(x_j))_{n\times n})$ is invertible in $A(n)$.
\begin{theorem} (\cite{Wilson})\label{wilsonauto} Let $\ggg=W(n)$ over $\bbk$ of characteristic $p\geqslant3$ (unless $n=1$ with assumption $p>3$). The following statements hold.
\begin{itemize}
\item[(1)] The automorphism group $\text{Aut}(\ggg)$ coincides with the adjoint group $G=\text{Aut}_p(\ggg)^\circ$ (see the paragraph  before  \S\ref{1.1} for the notation).
\item[(2)] The group $G$ is a semi-direct product $G=G_0\ltimes U$, where $G_0\cong \GL(n,\bbk)$ consists of those autormophisms preserving the $\bbz$-grading of $\ggg$, and $$U=\{g\in G;(g-\id_\ggg)(\ggg_i)\subset \ggg_{i+1}\}.$$
\end{itemize}
\end{theorem}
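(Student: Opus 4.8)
The plan is to derive both statements from the two facts recorded immediately above the theorem: the group isomorphism $\Aut(A(n)) \cong \Aut(W(n))$, $\varphi \mapsto \overline\varphi$, and the semidirect decomposition $\Aut(W(n)) = \Aut(W(n))_0 \ltimes \Aut(W(n))_1$ with $\Aut(W(n))_0 \cong \GL(n,\bbk)$. For (1), I would first check that every $\phi \in \Aut(\ggg)$ automatically preserves the $p$-structure. Writing $\phi = \overline\varphi$ for a suitable $\varphi \in \Aut(A(n))$ and recalling that the $p$-map on $\ggg = \Der(A(n))$ is the $p$-th power of a derivation, one computes
$$\phi(D^{[p]}) = \varphi \circ D^p \circ \varphi^{-1} = (\varphi \circ D \circ \varphi^{-1})^p = \phi(D)^{[p]}$$
for all $D \in \ggg$, so that $\Aut(\ggg) = \Aut_p(\ggg)$.

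Next I would establish connectedness. By the recorded decomposition, $\Aut(\ggg)$ is the semidirect product of the connected group $\Aut(\ggg)_0 \cong \GL(n,\bbk)$ with the unipotent radical $\Aut(\ggg)_1$, which is connected by definition; a semidirect product of connected algebraic groups is connected, so $\Aut(\ggg)$ is connected. Combining this with the previous step yields $\Aut(\ggg) = \Aut_p(\ggg) = \Aut_p(\ggg)^\circ = G$, which is statement (1).

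For (2), set $G_0 := \Aut(\ggg)_0$ and $U := \Aut(\ggg)_1$, so that $G = G_0 \ltimes U$ with $G_0 \cong \GL(n,\bbk)$ by the same decomposition; it then remains only to identify $U$ with $\{g \in G : (g-\id_\ggg)(\ggg_i) \subset \ggg_{i+1},\ i=-1,\dots,h\}$, equivalently with the automorphisms acting trivially on the associated graded $\bigoplus_i \ggg_i/\ggg_{i+1}$. For the inclusion $\subseteq$, an element $g = \overline\varphi \in U$ comes from $\varphi \in \Aut(A(n))$ with trivial linear part, $\varphi(x_i) \equiv x_i \pmod{\sum_{|{\bf a}|\geq 2}\bbk{\bf x}^{\bf a}}$; a direct computation of $\overline\varphi(\partial_j) = \varphi \circ \partial_j \circ \varphi^{-1}$ shows $(g-\id_\ggg)\ggg_{[-1]} \subset \ggg_0$, and since the condition $(g-\id_\ggg)\ggg_i \subset \ggg_{i+1}$ is stable under the Lie bracket by (\ref{standardgrading}) and $\ggg$ is generated by its low-degree homogeneous components, the filtration condition for all $i$ follows. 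For the reverse inclusion, factor an arbitrary such $g$ as $g = g_0 u$ with $g_0 \in G_0$ and $u \in U$; since both $g$ and $u$ act trivially on every $\ggg_i/\ggg_{i+1}$, so does the grading-preserving automorphism $g_0$, forcing $g_0 = \id_\ggg$ and hence $g = u \in U$.

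The genuinely deep input is the surjectivity half of $\Aut(A(n)) \cong \Aut(W(n))$ — that every abstract Lie-algebra automorphism of $W(n)$ is induced by an automorphism of the coordinate algebra $A(n)$ — together with the invariance of the standard filtration (\ref{filtation}) under all automorphisms, which is what makes the associated graded well defined and underlies the whole decomposition. I would take this from \cite{Wilson}; reproving it would require reconstructing $A(n)$ and its filtration intrinsically from the Lie structure of $W(n)$, for instance by characterizing $\ggg_0$ as a distinguished maximal subalgebra, and this is the real content. Everything else above is bookkeeping.
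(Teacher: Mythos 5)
The first thing to note is that the paper contains no proof of this theorem: it is quoted verbatim from \cite{Wilson}, and the two facts you build on (the isomorphism $\Aut(A(n))\cong\Aut(W(n))$ and the decomposition $\Aut(W(n))=\Aut(W(n))_0\ltimes\Aut(W(n))_1$) are exactly the cited material stated in \S2.2. So there is no in-paper argument to compare against; the only question is whether your bookkeeping on top of Wilson's deep input is sound. You correctly identify the deep input, and your arguments for the $p$-map preservation (conjugation commutes with $p$-th powers of derivations), for connectedness of $G_0\ltimes U$, and for the factorization $g=g_0u$ forcing $g_0=\id$ are all fine.

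There is, however, one step that genuinely fails: in the inclusion $\Aut(\ggg)_1\subseteq\{g:(g-\id_\ggg)(\ggg_i)\subset\ggg_{i+1}\}$ you verify the displacement condition only on $\ggg_{[-1]}$ and then invoke ``$\ggg$ is generated by its low-degree homogeneous components.'' For $W(1)$ --- which the theorem covers when $p>3$ --- this is false: $\partial$, $x\partial$, $x^2\partial$ span a copy of $\mathfrak{sl}_2$ closed under the bracket, so the local part generates a proper subalgebra, and any generating set must reach up to the top component $x^{p-1}\partial$. Even for $n\geqslant 2$, where generation by the local part $\ggg_{[-1]}\oplus\ggg_{[0]}\oplus\ggg_{[1]}$ does hold, your computation covers only $\ggg_{[-1]}$, so the propagation is incomplete as stated. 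The repair is easy and makes the generation argument unnecessary. Let $\mathfrak{m}$ be the maximal ideal of $A(n)$; then $\ggg_i=\{D\in\ggg\mid D(\mathfrak{m}^k)\subset\mathfrak{m}^{k+i}\text{ for all }k\}$. Write $\varphi=\id+\delta$ with $\delta(\mathfrak{m}^k)\subset\mathfrak{m}^{k+1}$ (trivial linear part). For $D\in\ggg_i$,
\begin{align*}
\overline\varphi(D)-D=(\varphi D-D\varphi)\varphi^{-1}=(\delta D-D\delta)\varphi^{-1},
\end{align*}
and $\delta D(\mathfrak{m}^k)\subset\delta(\mathfrak{m}^{k+i})\subset\mathfrak{m}^{k+i+1}$, $D\delta(\mathfrak{m}^k)\subset D(\mathfrak{m}^{k+1})\subset\mathfrak{m}^{k+i+1}$, while $\varphi^{-1}$ preserves each $\mathfrak{m}^k$; hence $\overline\varphi(D)-D\in\ggg_{i+1}$ for every $i$ simultaneously, with no appeal to generators. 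With this substitution your derivation is correct, and the hypotheses $p\geqslant 3$ (resp.\ $p>3$ for $n=1$) enter only where you said they do: in Wilson's isomorphism itself.
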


\subsection{Conjugacy classes of the maximal tori} \label{2.3}
According to Demu\u{s}kin's result \cite{De1}, we have the following conjugacy results for maximal tori of $W(n)$.
\begin{theorem}\label{Demuskintori} Let $\ggg=W(n)$. Then the following statements hold.
\begin{itemize}
\item[(1)] Two maximal tori $\ttt,\ttt'$ belong to the same $G$-orbit if and only  if $\dim\ttt\cap\ggg_0=\dim\ttt'\cap\ggg_0$.
\item[(2)] There are $(n+1)$ conjugacy classes of maximal tori of $\ggg$. Each maximal torus of $\ggg$ is conjugate to  one of   $$\ttt_r=\sum_{i=1}^n\bbk z_i\partial_i,\; r=0,1,\cdots,n$$
    where $z_i=x_i$ for $i=1,\cdots, n-r$, and $z_i=1+x_i$ for $i=n-r+1,\cdots,n$.
\end{itemize}
\end{theorem}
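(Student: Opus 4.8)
The plan is to reduce everything to the single assertion that each maximal torus $\ttt$ of $\ggg=W(n)$ is $G$-conjugate to exactly one $\ttt_r$, with $r=n-\dim(\ttt\cap\ggg_0)$; parts (1) and (2) then follow formally. I would first dispose of the easy half. The subalgebra $\ggg_0=\{D\in\ggg : D(\mmm)\subseteq\mmm\}$, where $\mmm=(x_1,\dots,x_n)$ is the maximal ideal of $A(n)$, is intrinsically defined, hence stable under $\Aut(A(n))\cong\Aut(W(n))=G$ (every automorphism fixes $\mmm$). Therefore $\dim(\ttt\cap\ggg_0)$ is constant on $G$-orbits. A direct computation shows each $z_i\partial_i$ is toral, $(z_i\partial_i)^{[p]}=z_i\partial_i$, and that distinct ones commute, so $\ttt_r$ is a torus of dimension $n=\mu(\ggg)$, i.e. maximal; since $x_i\partial_i\in\ggg_{[0]}\subseteq\ggg_0$ while $(1+x_i)\partial_i=\partial_i+x_i\partial_i$ has a nonzero $\ggg_{[-1]}$-component, one gets $\ttt_r\cap\ggg_0=\sum_{i\le n-r}\bbk\, x_i\partial_i$, so $\dim(\ttt_r\cap\ggg_0)=n-r$. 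These values are distinct, which already yields the ``only if'' part of (1) and the lower bound $n+1$ on the number of classes.

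For the substantial half I would diagonalize $\ttt$ on $A(n)$. Since $\ttt$ consists of commuting semisimple derivations with eigenvalues in $\bbf_p$, the algebra $A(n)$ decomposes as a grading $A(n)=\bigoplus_{\chi\in X(\ttt)}A(n)_\chi$ by the character group $X(\ttt)=\Hom_{\bbf_p}(\ttt,\bbf_p)\cong\bbf_p^{\,n}$, compatible with multiplication, with $1\in A(n)_0$. The key input is that maximality forces this grading to be fine. Indeed $\ttt$ has dimension $\mu(\ggg)=n$, and since $r(\ggg)=n$ as well (computed on $\ttt_0$), the regular Cartan subalgebra $C_\ggg(\ttt)$ coincides with $\ttt$; but $C_\ggg(\ttt)$ is exactly the space of degree-zero (weight-preserving) derivations, so a weight space of dimension $\ge 2$ would enlarge $C_\ggg(\ttt)$ beyond $\ttt$. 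Hence $\dim A(n)_\chi\le 1$, and as $\dim A(n)=p^{\,n}=|X(\ttt)|$ this gives $\dim A(n)_\chi=1$ for every $\chi$ and $A(n)_0=\bbk\cdot 1$.

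Next I would separate the two types of directions. Writing $A(n)_\chi=\bbk\, v_\chi$, the power $v_\chi^{\,p}$ lies in $A(n)_{p\chi}=A(n)_0=\bbk$, so $v_\chi$ is either a unit or nilpotent; the set $\Gamma=\{\chi : A(n)_\chi\text{ contains a unit}\}$ is a subgroup of $X(\ttt)$, say of $\bbf_p$-rank $r$. Choose a basis $\chi_1,\dots,\chi_n$ of $X(\ttt)$ with $\chi_{n-r+1},\dots,\chi_n$ spanning $\Gamma$. For $i>n-r$ pick a unit $u_i\in A(n)_{\chi_i}$ rescaled so that $u_i^{\,p}=1$, and set $t_i=u_i-1$ (so $t_i^{\,p}=0$); for $i\le n-r$ pick a nilpotent weight vector $t_i\in A(n)_{\chi_i}$ (so $t_i^{\,p}=0$). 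Granting that $t_1,\dots,t_n$ generate $A(n)$, the assignment $x_i\mapsto t_i$ is an automorphism, and with $\partial_{t_i}$ the dual derivations one computes on the weight basis $\prod_{i>n-r}u_i^{a_i}\prod_{i\le n-r}t_i^{a_i}$ that every $D\in\ttt$ equals $\sum_i\chi_i(D)\, z_i\partial_{t_i}$ with $z_i=1+t_i$ for $i>n-r$ and $z_i=t_i$ otherwise. A dimension count then gives $\ttt=\sum_i\bbk\, z_i\partial_{t_i}$, which the automorphism carries to $\ttt_r$; together with the first paragraph this proves both statements.

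The main obstacle is the generation claim, together with the fineness it rests on. Reducing generation to the nonvanishing of the monomials $\prod_{i>n-r}u_i^{a_i}\prod_{i\le n-r}t_i^{a_i}$ (equivalently $\prod_{i\le n-r}t_i^{a_i}\neq 0$ for all $0\le a_i\le p-1$, since the $u_i$ are units) isolates exactly what must be shown: that the nilpotent weight vectors attached to a complement of $\Gamma$ can be chosen to form a genuine regular system of parameters. This nondegeneracy, and the fineness of the grading used to obtain it, are the points where maximality of $\ttt$ must be invoked in an essential way, and they constitute the technical core of Demu\v{s}kin's classification; the remaining steps are the formal bookkeeping recorded above.
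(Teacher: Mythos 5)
The paper does not actually prove this statement: it is quoted verbatim from Demu\v{s}kin's classification \cite{De1}, so your proposal has to be measured against Demu\v{s}kin's argument itself. Your bookkeeping is sound: $\ggg_0=\{D\in W(n)\mid D(\mmm)\subseteq\mmm\}$ for the unique maximal ideal $\mmm$ of $A(n)$ is indeed intrinsic, so $\dim(\ttt\cap\ggg_0)$ is a conjugation invariant; the computation $\dim(\ttt_r\cap\ggg_0)=n-r$ is correct; and \emph{granted} a fine $X(\ttt)$-grading of $A(n)$ with $A(n)_0=\bbk$ and granted that the chosen weight vectors $t_1,\dots,t_n$ form a coordinate system, your reconstruction $D=\sum_i\chi_i(D)z_i\partial_{t_i}$ and the conjugation of $\ttt$ onto $\ttt_r$ via $x_i\mapsto t_i$ work exactly as in the standard treatment (this weight-space strategy is precisely Demu\v{s}kin's, as reproduced for instance in Strade's book).

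The genuine gaps are the two facts you flag at the end, and the one reduction you do offer for them is circular. First, you assert that a maximal torus automatically has dimension $\mu(\ggg)=n$. Maximality under inclusion does not imply maximal dimension in a general restricted Lie algebra (maximal tori of different dimensions do occur), and for $W(n)$ the equality of all these dimensions is itself part of Demu\v{s}kin's theorem; invoking it as input begs the question. Second, even granting $\dim\ttt=n$ and hence $C_\ggg(\ttt)=\ttt$ via Premet's rank theorem, your fineness deduction ``a weight space of dimension $\geqslant 2$ would enlarge $C_\ggg(\ttt)$'' is only asserted: a linear map preserving each $A(n)_\chi$ need not be a derivation, so one must actually construct a semisimple derivation centralizing $\ttt$ but outside it (Demu\v{s}kin does this through a structural analysis of $A(n)_0$ and an induction, not a dimension count). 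Third, the claim that the nilpotent weight vectors attached to a complement of $\Gamma$ can be chosen to be a regular system of parameters --- equivalently that all monomials $\prod_{i\leqslant n-r}t_i^{a_i}$, $0\leqslant a_i\leqslant p-1$, are nonzero --- is left entirely unproved, and you say so yourself. Since these three points (maximal tori have dimension $n$; the grading is fine; the parameters are nondegenerate) constitute the whole analytic content of the theorem, what you have is a correct reduction of the statement to its technical core together with the surrounding formalities, not a proof.
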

We call these $\ttt_r$ the standard maximal tori of $W(n)$.

\subsection{Gradings associated with $\ttt_r$}
Note that the  truncated polynomial algebra $A(n)$ can be presented as  the quotient algebra $\bbk[T_1,\cdots,T_n]\slash (T_1^p-1,\cdots,T_n^p-1)$. Denote the image $T_i$ by $y_i$ in the quotient algebra. Then we can write $A(n)$ as $\bbk[y_1,\cdots,y_n]$. Comparing with the notations used  in \S\ref{firstnotations},  we actually  have $y_i=1+x_i$, $i=1,\cdots,n$. More generally,  $A(n)$ can be presented as a truncated polynomial
$$\bbk[z_1,\cdots,z_{n-r};z_{n-r+1},\cdots,z_n]$$
 with generator $z_i:=x_i, z_j:=y_j, \; i=1,\cdots,n-r;j= n-r+1,\cdots,n$, and defining relations:
\begin{align*}
&[x_i,x_{i'}]=[y_{j},y_{j'}]=[x_i,y_j]=x_i^p=y_j^p-1=0.
\end{align*}
Thus, $W(n)$ can be  presented, as a vector space,
\begin{align}\label{W(n) r-decomp}
W(n)=\sum_{i=1}^n\sum_{{\bf{c}}(i)\in\bbp^n}\bbk{\bf{z}}^{{\bf{c}}(i)}\partial_i,
\end{align}
where ${\bf{z}}=(z_1,\cdots,z_n)$ and ${\bf{z}}^{{\bf{c}}(i)}=z_1^{c_1}\cdots z_n^{c_n}$ with ${\bf{c}}(i)=(c_1,\cdots,c_n)\in \bbp^n$. Associated to the presentation (\ref{W(n) r-decomp}), there is a $\bbz$-graded structure as below, called {\sl $\bbz(\ttt_r)$-grading}:
\begin{align} \label{W(n) trgraded}
W(n)=\bigoplus_s W^{(\ttt_r)}_{[s]}, \mbox{ with }
W^{(\ttt_r)}_{[s]}=\bbk\mbox{-Span}\{{\bf{z}}^{{\bf{c}}(i)}\partial_i \mid |{\bf{c}}(i)|=s+1, i=1,\cdots,n\}.
\end{align}
Actually, every homogenous space $W^{(\ttt_r)}_{[s]}$ is a $\ttt_r$-module.
For the case $r=0$, the associated graded structure in (\ref{W(n) trgraded}) is called a {\sl standard-graded structure}, coinciding with the one in (\ref{standardgrading}). It is worthwhile mentioning a fact that for a given $v\in W(n)$,  $v$ belongs to $W^{(\ttt_r)}_{[s]}$ 
if and only if
\begin{align*} 
\ad T_r(v)=sv
\end{align*}
with $T_r:=\sum_{i=1}^n z_i\partial_i$, where $z_i=x_i$ and  $z_j=1+x_j$, $i=1,\cdots, n-r; j=n-r+1,\cdots,n$.

When talking about $\ttt_0$, we will omit the superscript for the associated graded structure as below
\begin{align*} 
W(n)=\bigoplus_s W_{[s]}, \mbox{ with }
W_{[s]}=\bbk\mbox{-Span}\{{\bf{x}}^{{\bf{c}}(i)}\partial_i \mid |{\bf{c}}(i)|=s+1, i=1,\cdots,n\}.
\end{align*}
This gives rise to a $\bbz$-graded Lie algebra structure for $W(n)$ as shown in (\ref{standardgrading}). Thanks to Theorem \ref{wilsonauto}, the associated filtration is invariant under $\Aut(W(n))$. Let $\frakh$ be a subalgebra of $W(n)$.  Call $\frakh$ {\sl {a $\bbz(\ttt_r)$-graded subalgebra}} if $\frakh=\sum_{i}\frakh_{[i]}^{(\ttt_r)}$, where  $\frakh_{[i]}^{(\ttt_r)}=\frakh\cap W(n)^{(\ttt_r)}_{[i]}$.

We refine $\bbz(\ttt_r)$-grading.  
If a subalgebra $\frakh$ is $\bbz(\ttt_r)$-graded, we set for $\alpha\in \bbp^n$ with $|\alpha|=i+1$,  $\frakh_\alpha=\{v\in\frakh^{(\ttt_r)}_{[i]}
\mid \ad z_i\partial_i(v)=\alpha_i v\}$. Then
\begin{align}\label{homo weights}
\frakh=\sum_{\alpha\in {\bbp^n}}\frakh_\alpha^{(\ttt_r)}.
\end{align}
In this sense, we call $\frakh$ is {\sl $\ttt_r$-graded}. The weight system of $\frakh$ associated with $\ttt_r$ is the set of all $\alpha$ with nonzero weight space $\frakh_\alpha^{(\ttt_r)}$.
Naturally, the conditions of bing $\ttt_r$-graded and of being $\bbz(\ttt_r)$ are equivalent.

\begin{defn}\label{homo Borel}
A subalgebra $Q$ is called  {\it homogeneous} if its image under $\varphi$ is $\ttt_r$-graded as long as $Q$ contains a maximal torus conjugate to $\ttt_r$ under $\varphi\in \Aut(W(n))$.
\end{defn}

\subsection{}\label{2.6}
Recall $W(n)_{[0]}\cong \gl(n,\bbk)$ under the mapping $x_i\partial_j\mapsto E_{ij}$, where $E_{ij}$ is the elementary matrix with  all entries equal to $0$ except $(i,j)$-entry equal to $1$. We then have a triangular  decomposition $W(n)_{[0]}=\sum_{i<j}\bbk x_j\partial_i +\ttt_0+\sum_{i<j}\bbk x_i\partial_j$. Denote $\bbb=\ttt_0+\sum_{i<j}\bbk x_i\partial_j$, which is a standard  Borel subalgebra of $W(n)_{[0]}$. We identify $W(n)_{[0]}$ with $\gl(n,\bbk)$, and $\Aut(W(n)_{[0]})$ with $\GL(n,\bbk)$ in the sequent arguments whenever the context is clear.

\begin{lemma}\label{The Third Key Lemma} Assume the characteristic $p$ of the ground field $\bbk$ is bigger than $3$, and  $W(n)_0=\sum_{i\geqslant 0}W(n)_{[i]}$.  The following statements hold.
\begin{itemize}
\item[(1)] All maximal tori in $W(n)_0$ are conjugate to $\ttt_0$ under $\Aut(W(n))$.
\item[(2)] All Borel subalgebras of $W(n)_0$ are conjugate to $\tsb_0:=\bbb+W(n)_1$ under $\Aut(W(n))$.
\end{itemize}
\end{lemma}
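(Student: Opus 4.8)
The plan is to handle the two parts together, exploiting the structure of $W(n)_0$ as a restricted subalgebra with nilpotent ideal $W(n)_1=\sum_{j\geqslant 1}W(n)_{[j]}$ and Levi part $W(n)_{[0]}\cong\gl(n,\bbk)$, and to reduce everything to the classical statements in $\gl(n,\bbk)$ together with the grading Corollary \ref{theSecondKeyLemma}. First I would record a structural fact: since $\ad D$ strictly raises the filtration degree for every $D\in W(n)_1$ and the filtration (\ref{filtation}) is finite, each element of $W(n)_1$ is $\ad$-nilpotent; as a toral element is $\ad$-semisimple and $W(n)$ is centerless, $W(n)_1$ contains no nonzero toral element. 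Hence the projection $\pi\colon W(n)_0\to W(n)_0/W(n)_1\cong\gl(n,\bbk)$ is injective on every torus. For (1), let $\ttt$ be a maximal torus of $W(n)_0$ and put $\bar\ttt=\pi(\ttt)$, a torus of $\gl(n,\bbk)$, i.e. a space of commuting semisimple elements. Using the grading-preserving subgroup $G_0\cong\GL(n,\bbk)$ of Theorem \ref{wilsonauto}, which acts on $W(n)_{[0]}$ through the adjoint action of $\GL(n,\bbk)$, I would first conjugate so that $\bar\ttt$ lies in the diagonal torus, i.e. $\bar\ttt\subseteq\ttt_0$.

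The core of (1) is then to straighten $\ttt$ into $W(n)_{[0]}$ by conjugating with the unipotent radical $U$. Because $\pi$ restricts to an isomorphism of restricted Lie algebras $\ttt\to\bar\ttt$, I may pick a toral basis $\bar Y_1,\dots,\bar Y_d$ of $\bar\ttt$ and lift it to a toral basis $Y_a=\bar Y_a+\sum_{k\geqslant 1}c_{a,k}$ of $\ttt$ with $c_{a,k}\in W(n)_{[k]}$ and $Y_a^{[p]}=Y_a$. I would induct on $k$, assuming $c_{a,1}=\dots=c_{a,k-1}=0$ for all $a$ and removing the degree-$k$ part. Decomposing $W(n)_{[k]}$ into $\bar\ttt$-weight spaces, the abelian relation $[Y_a,Y_b]=0$ yields, in degree $k$, the identity $\ad(\bar Y_a)c_{b,k}=\ad(\bar Y_b)c_{a,k}$, which forces the nonzero-weight part of $c_{a,k}$ to be a common coboundary $\ad(\bar Y_a)\xi_k$ for a single $\xi_k\in W(n)_{[k]}$. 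The weight-zero part is exactly where $\ttt$ being a genuine torus rather than merely abelian is needed: comparing degree-$k$ components in $Y_a^{[p]}=Y_a$ and using that every term of Jacobson's $[p]$-correction to $(\bar Y_a+w)^{[p]}$ involves at least one bracket with $\bar Y_a$, the weight-zero component of $c_{a,k}$ is seen to vanish. Conjugating by an element of the congruence subgroup $U_k=\{g\in U\mid (g-\id)W(n)_i\subseteq W(n)_{i+k}\}$ with leading term $\xi_k$ then removes the degree-$k$ component without disturbing lower degrees; after finitely many steps $\ttt\subseteq\ttt_0$, and maximality gives $\ttt=\ttt_0$.

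Part (2) should then follow cleanly. Given a Borel $\cb$ of $W(n)_0$, it contains a maximal torus of $W(n)_0$, which by (1) I may conjugate to $\ttt_0$; then Corollary \ref{theSecondKeyLemma} makes $\cb=\sum_{j\geqslant 0}\cb_{[j]}$ a $\ttt_0$-graded subalgebra. Since $W(n)_1$ is a nilpotent ideal, $\cb+W(n)_1$ is again solvable, so maximality of $\cb$ forces $W(n)_1\subseteq\cb$ and hence $\cb=\cb_{[0]}\oplus W(n)_1$ with $\cb_{[0]}\subseteq\gl(n,\bbk)$ a solvable subalgebra containing $\ttt_0$; a solvable subalgebra of $\gl(n,\bbk)$ properly containing $\cb_{[0]}$ would, together with $W(n)_1$, contradict the maximality of $\cb$, so $\cb_{[0]}$ is a Borel subalgebra of $\gl(n,\bbk)$. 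By Proposition \ref{GLBconjugation} (valid since $p>3$) the Borels of $\gl(n,\bbk)$ containing $\ttt_0$ are conjugate to $\bbb$ by a Weyl-group representative $w\in N_{G_0}(\ttt_0)$; as $w\in G_0$ preserves the grading it fixes $\ttt_0$ and stabilizes $W(n)_1$, whence $w\cdot\cb=\bbb\oplus W(n)_1=\tsb_0$.

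I expect the main obstacle to be the straightening step in (1), and in particular its characteristic-$p$ subtleties. The filtration length $n(p-1)$ may exceed $p$, so one cannot exponentiate the relevant nilpotent derivations and must instead argue degree by degree inside the group $U$ through its congruence filtration $U_k$; and the weight-zero component of the obstruction does not vanish for formal cohomological reasons but only because the lifted elements are $[p]$-semisimple. The remaining points — that $W(n)_0$ and $\tsb_0$ are genuinely of the asserted type, and the bookkeeping in the $\gl(n,\bbk)$ reduction — are routine.
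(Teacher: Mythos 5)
Your proposal is correct, but part (1) takes a genuinely different route from the paper. The paper's proof of (1) is a reduction to Demu\u{s}kin's theorem (Theorem \ref{Demuskintori}): writing a basis of a maximal torus $\ttt\subset W(n)_0$ as $Z_i=T_i+V_i$ with $T_i\in W(n)_{[0]}$ and $V_i\in W(n)_1$ nilpotent, it argues that the $T_i$ span a maximal torus of $\gl(n,\bbk)$, hence that $\ttt$ is already a maximal torus of $W(n)$ itself with $\dim\ttt\cap W(n)_0=n$, and conjugacy to $\ttt_0$ under $\Aut(W(n))$ is then exactly the case $\dim\ttt\cap\ggg_0=n$ of Demu\u{s}kin's classification. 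You instead prove this conjugacy from scratch by straightening through the congruence filtration of $U$, and the mechanism checks out: the cocycle identity $\ad(\bar Y_a)c_{b,k}=\ad(\bar Y_b)c_{a,k}$ does yield one coboundary $\xi_k$ covering all nonzero $\bar\ttt$-weights (take $\xi_{k,\lambda}=\lambda(\bar Y_{a_0})^{-1}c_{a_0,k,\lambda}$ for any $a_0$ with $\lambda(\bar Y_{a_0})\neq 0$); the degree-$k$ part of $Y_a^{[p]}=Y_a$ is $(\ad\bar Y_a)^{p-1}c_{a,k}=c_{a,k}$, which kills the weight-zero components as you claim; and conjugating by the automorphism induced by $x_i\mapsto x_i+\xi_k(x_i)$, which lies in $U_k$ and acts as $\id+\ad\xi_k$ modulo one higher filtration degree, completes the induction, with your final appeal to maximality ($\Phi(\ttt)\subseteq\ttt_0$ forces equality) arguably cleaner than the paper's dimension bookkeeping. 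What each approach buys: the paper's argument is a few lines long but leans on the cited, nontrivial classification of maximal tori of $W(n)$; yours is self-contained modulo standard facts you should still record explicitly --- that the projection $W(n)_0\to\gl(n,\bbk)$ is a homomorphism of restricted Lie algebras (so toral bases of $\bar\ttt$ lift to toral bases of $\ttt$), and the stated congruence property of the automorphism, which is implicit in Wilson's description of $U$ --- and you correctly flag the characteristic-$p$ obstruction to simply exponentiating $\ad\xi_k$. Part (2) is essentially the paper's argument (grading via Corollary \ref{theSecondKeyLemma}, $W(n)_1\subseteq\cb$ by maximality, $\cb_{[0]}$ a Borel subalgebra of $\gl(n,\bbk)$, then Proposition \ref{GLBconjugation}); the one simplification available to you is that no Weyl-group representative normalizing $\ttt_0$ is needed --- any $g\in G_0$ carrying $\cb_{[0]}$ to $\bbb$ already maps $\cb$ to $\tsb_0$, since every automorphism preserves the filtration and hence $W(n)_1$.
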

\begin{proof} (1) Set $\ggg=W(n)$， and $G=\Aut(W(n))$. Denote the standard-graded and filtered structure by $\ggg=\sum_i\ggg_{[i]}$ and $\{\ggg_i\}$ respectively. According to Dem\u{s}kin's result (cf. \cite{De1}), we only need to show that maximal tori of $\ggg_0$ are also the ones of $\ggg$. Suppose that $\ttt$ is a maximal torus of $\ggg_0$ with basis $Z_i, i=1,\cdots, m$. We can write $Z_i=T_i+V_i$ with $T_i\in \ggg_{[0]}$ and $V_i\in \ggg_1$. Note that $V_i$ is nilpotent. Hence $T_i$ must be a semisimple element. Thus the set $\{T_i, i=1,\cdots,m\}$ spans a maximal torus of $\ggg_0$, thereby the maximal torus of $\ggg_{[0]}$. The latter is conjugate to $\ttt_0$ under $\GL(n,\bbk)$ (cf. \cite[Theorem III.4.1]{Seligman}. The first assertion is proved.

(2) We first observe that for any Borel subalgebra $\bbb'$ of $\ggg_{[0]}\cong {\frak{gl}}(n,\bbk)$, $\cb':=\bbb'+\ggg_1$ must be a Borel subalgebra of $\ggg_0$. The solvableness of $\cb'$ comes from the fact that ${\cb'}^{(n)}\subset \ggg_1$, and $\ggg_1$ is nilpotent, where  $L^{(m)}$ for a Lie algebra $L$ denotes the $m$-th derived ideal, i.e.  $L^{(m)}:=[L^{(m-1)}, L^{(m-1)}]$ by induction  with $L^{(0)}:=L$. In order to check  the maximal solvableness of $\cb'$, we observe that $\cb'\subset \ggg_0$ is standard-graded with $\cb'_{[0]}=\bbb'$ and $\cb'_{[i]}=\ggg_{[i]}$ for all $i>0$. So any other solvable algebra of $\ggg_0$ containing $\cb'$ is also standard-graded. Then the maximal solvableness of $\bbb'$ in $\ggg_{[0]}$ implies that of $\cb'$ in $\ggg_0$. Note that any Borel subalgebra  of $\gl(n,\bbk)$ are conjugate to the standard one $\bbb$ under $\GL(n,\bbk)\subset \Aut(W(n))$ (cf. \cite[\S14.3-4]{Hum-1}). It follows that any Borel subalgebra of $\ggg_0$, up to conjugation,  can be constructed in this way.

By the invariance of the filtration $\{\ggg_i\}$ under $G$, we know those Borel subalgebras are conjugate to $\tsb_0=\bbb+W(n)_1$ under $\GL(n,\bbk)$. We complete the proof.
\end{proof}


\subsection{An application to  $W(1)$}   We assume $\ggg=W(1)$ and the characteristic $p$ of the ground field is bigger than $3$. In this special case,  we adopt the notations $x$ and $\partial$, with the same meaning as  $x_1$ and $\partial_1$ respectively. 
 It is easily known that  any subalgebra of $W(1)$ containing $\ttt_i$, $i\in\{0,1\}$ is $\ttt_i$-graded, thereby  all Borel subalgegbras of $W(1)$ is homogeneous. We can list the conjugation result of Borel subalgebras in $W(1)$ as below.

\begin{prop} There are only two conjugacy classes of Borel subalgebras for the Witt algebra $W(1)$. The standard representatives are
$\tsb_1:=\bbk\partial+\bbk x\partial$ and $\tsb_0:=\bbk x\partial+\bbk x^2\partial +\cdots +\bbk x^{p-1}\partial$.
\end{prop}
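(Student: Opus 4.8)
The plan is to combine the classification of maximal tori of $W(1)$ (Theorem \ref{Demuskintori}) with the grading lemma (Lemma \ref{theSecondKeyLemma-1} and Corollary \ref{theSecondKeyLemma}) so as to reduce the determination of Borel subalgebras to a finite combinatorial problem about weights. Write $e_i = x^{i+1}\partial$ for $-1\le i\le p-2$, so that $W(1)=\bigoplus_i \bbk e_i$ with $[e_i,e_j]=(j-i)e_{i+j}$ (and $e_k=0$ for $k$ outside the range). Every Borel subalgebra $\cb$ contains a maximal torus of $W(1)$, and by Theorem \ref{Demuskintori} this torus is $G$-conjugate to $\ttt_0=\bbk x\partial$ or to $\ttt_1=\bbk(1+x)\partial$ (here $p>3$ guarantees via Theorem \ref{wilsonauto} that $G$ is the full automorphism group, so $G$-conjugacy is conjugacy by substitutions). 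After conjugating I may thus assume $\cb\supseteq\ttt_0$ or $\cb\supseteq\ttt_1$, and by Corollary \ref{theSecondKeyLemma} such a $\cb$ is $\ttt_r$-graded, hence a direct sum of one-dimensional weight lines; the task becomes to find which sets of weights span a maximal solvable subalgebra.

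For the $\ttt_0$-case, $\cb=\bigoplus_{i\in I}\bbk e_i$ with $0\in I\subseteq\{-1,0,\dots,p-2\}$ closed under $(i,j)\mapsto i+j$ whenever $i\neq j$ and $-1\le i+j\le p-2$. The key observation is that if $-1\in I$ and $I$ contains any positive weight $k$, then repeatedly adding $-1$ forces $\{0,1,\dots,k\}\subseteq I$, in particular $1\in I$, so $\cb\supseteq\langle e_{-1},e_0,e_1\rangle\cong\mathfrak{sl}_2$, which is not solvable for $p>3$. Hence a solvable such $\cb$ either contains $e_{-1}$ and no positive weight, giving $I=\{-1,0\}$ and $\cb=\bbk\partial+\bbk x\partial$, or satisfies $I\subseteq\{0,\dots,p-2\}$, whose unique maximal (and manifestly solvable, its derived algebra lying in the nilpotent $\ggg_1$) member is $\ggg_0=\bbk x\partial+\cdots+\bbk x^{p-1}\partial$. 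This produces exactly the two asserted representatives among Borels containing $\ttt_0$; they have dimensions $2$ and $p-1$ and so are not $G$-conjugate.

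The main work is the $\ttt_1$-case. Setting $y=1+x$, so $y^p=1$ in $A(1)$, the elements $E_i:=y^{i+1}\partial$ ($i\in\bbz/p$) satisfy $[E_i,E_j]=(j-i)E_{i+j}$ with indices now read cyclically in $\bbz/p$, and $\ttt_1=\bbk E_0$. A Borel containing $\ttt_1$ is $\bigoplus_{i\in M}\bbk E_i$ for $M\ni 0$ closed under $(i,j)\mapsto i+j$ for $i\neq j$. The crux is to show that if $M$ contains two distinct nonzero weights then $\cb$ is not solvable: unless $M$ already contains an antipodal pair $\{c,-c\}$ (producing $\mathfrak{sl}_2=\langle E_c,E_0,E_{-c}\rangle$), starting from distinct nonzero $a,b\in M$ an induction shows the whole arithmetic progression $\{ma+b:m\in\bbz/p\}$ lies in $M$; since $a\neq 0$ this is all of $\bbz/p$, forcing an antipodal pair anyway. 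Hence a solvable such $\cb$ has at most one nonzero weight, so every Borel containing $\ttt_1$ is two-dimensional of the form $\bbk E_0+\bbk E_a$.

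It remains to see that all these two-dimensional Borels form a single $G$-class containing $\bbk\partial+\bbk x\partial$. For this I would use the automorphisms $\phi_c$ of $A(1)$ determined by $y\mapsto y^c$ (equivalently $x\mapsto (1+x)^c-1$) for $c\in\bbf_p^\times$, which are genuine automorphisms since $((1+x)^c-1)^p=0$ and the linear term is $cx\neq 0$; a direct computation gives $\overline{\phi_c}(E_i)=c^{-1}E_{ci}$, so $\phi_c$ normalizes $\ttt_1$ and acts on weights by the scaling $i\mapsto ci$. As $\bbf_p^\times$ acts transitively on the nonzero weights, all $\bbk E_0+\bbk E_a$ are mutually conjugate; and since $\bbk\partial+\bbk x\partial$ itself contains tori of type $\ttt_1$ (e.g. $\bbk(x+c)\partial$, which is semisimple with distinct eigenvalues and meets $\ggg_0$ trivially), conjugating such a torus to $\ttt_1$ identifies $\bbk\partial+\bbk x\partial$ with one of the $\bbk E_0+\bbk E_a$. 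Combining the two cases, every Borel is $G$-conjugate to $\bbk\partial+\bbk x\partial$ (dimension $2$) or to $\ggg_0$ (dimension $p-1$), and these are distinct classes. The main obstacle is precisely this cyclic $\ttt_1$-analysis: both the non-solvability argument, which must exploit $y^p=1$ and the wrap-around in $\bbz/p$, and the verification that the resulting two-dimensional Borels coincide up to conjugacy with the type-$\ttt_0$ Borel $\bbk\partial+\bbk x\partial$.
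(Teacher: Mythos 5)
Your proposal is correct and follows essentially the same route as the paper: split into the cases $\cb\supseteq\ttt_0$ and $\cb\supseteq\ttt_1$, invoke the $\ttt_r$-grading from Lemma \ref{theSecondKeyLemma-1}/Corollary \ref{theSecondKeyLemma}, rule out non-solvable configurations (the $\mathfrak{sl}_2$-triples), and conjugate the two-dimensional Borels of type $\ttt_1$ to $\bbk\partial+\bbk x\partial$ via the automorphisms $x\mapsto(1+x)^c-1$, exactly the $\Phi$ used in the paper. Your cyclic weight-set argument in $\bbz/p$ in fact supplies a detail the paper only asserts --- that $\bbk(1+x)\partial+\bbk(1+x)^a\partial$ is maximal solvable, equivalently that a solvable subalgebra containing $\ttt_1$ carries at most one nonzero weight --- so your write-up is, if anything, more complete at that step.
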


\begin{proof} Recall $W(1)$ admits two conjugacy classes of maximal tori, with representatives $\ttt_0=\bbk x\partial$, and $\ttt_1=\bbk(1+x)\partial$.  For a given Borel subalgebra $\cb$,
we can assume that up to conjugation, either $\cb\supset \ttt_0$, or
$\cb\supset \ttt_1$.

 In the case when $\cb\supset \ttt_0$, the homogeneous property  implies that $\cb$ is standard-graded. So $\cb=\sum_{i}\cb_{[i]}$, where $\cb_{[0]}=W_{[0]}$. If $\dim\cb_{[-1]}=1$,
  then the maximal solvable subalgebra $\cb\supset \bbk\partial+\bbk x\partial$.
      However, the latter is already a maximal solvable subalgebra.
       This is because
  any subalgebra  properly containing $\bbk\partial+\bbk x\partial$  must contain $\bbk \partial +\bbk x\partial +\bbk x^2\partial$, while
   the latter is not solvable. 
  So it must happen that $\cb= \bbk\partial+\bbk x\partial$ in this case.
  If $\dim\cb_{[-1]}=0$, then $\cb\subset W(1)_{0}$. The latter is solvable. The maximal solvableness of $\cb$ implies $\cb=W(1)_0$. In the following argument, we assume that $\cb\supset \ttt_1$.

By the homogeneous property again, $\cb$ is $\ttt_1$-graded. Consider the $\bbz(\ttt_1)$-graded structure  $W(1)=\bigoplus_{i} W(1)^{(\ttt_1)}_{[i]}$.
In the  case $\dim \cb^{(\ttt_1)}_{[-1]}=1$, $\cb\supset \bbk\partial+\bbk(1+x)\partial=\bbk\partial+\bbk x\partial=\tsb_1$. Hence $\cb=\tsb_1$. Next we  consider the remaining  situation $\cb\supset \ttt_1$ and $\dim \cb^{(\ttt_1)}_{[-1]}=0$. In such a case, $\cb\subset \bbk(1+x)\partial+\sum_{i>0}\bbk(1+x)^i\partial$. As $\cb$ is a maximal solvable subalgebra,
$\cb\supsetneqq \bbk(1+x)\partial$. Therefore, there exists $D:=(1+x)^a\partial\in \cb$ with $p>a>1$ (note the homogeneous property). Consequently on can conclude that  $\cb$ coincides with $\bbk(1+x)\partial+\bbk(1+x)^a\partial$ since the latter is readily known  to be a maximal solvable subalgebra.  Actually,  any subalgebra properly containing $\bbk(1+x)\partial+\bbk(1+x)^a\partial$ must contain $\bbk(1+x)\partial+\bbk(1+x)^a\partial+\bbk \partial$ (note that $(1+x)^p=1$), which contradicts the assumption $\dim \cb^{(\ttt_1)}_{[-1]}=0$.  Next, we will show that $\cb$ ($=\bbk(1+x)\partial+\bbk (1+x)^a\partial$) is conjugate to $\tsb_1$. Consider $\varphi\in\Aut(W(1))$  which is defined via $\varphi(x)=(1+x)^b-1$, where $b\in\{ 2,\cdots,p-1 \}$ with $(1-a)b\equiv 1\mod p$. Then the inverse
 $\varphi^{-1}:x\mapsto (1+x)^{p-a+1}-1$. We have an automorphism $\overline\varphi$ of $W(1)$ induced by $\varphi$ (see \S\ref{Automorphisms}), denoted by $\Phi$. By a straightforward computation, we have
\begin{align}\label{compWitt}
&\Phi((1+x)^a\partial)=(p-a+1)\partial, \cr
&\Phi((1+x)\partial)=(p-a+1)(1+x)\partial.
\end{align}
Under the conjugation via such a $\Phi$, we have $\cb\cong \bbk\partial+\bbk(1+x)\partial=\tsb_1$.
\end{proof}

\begin{remark} For $\ggg=W(1)$, we can show that any maximal
solvable subalgebra is a Borel. Actually, for a given maximal
subalgebra $\cb$, it can be endowed with a filtration structure
$\{\cb_i\}$  inheriting the one of $\ggg$ as in (\ref{filtation}). Consider the graded subalgebra $\Gr(\cb)$ of $\ggg$. Then
$\Gr(\cb)=\sum_{i=-1}^{p-2}\Gr(\cb)_{[i]}$ is also a maximal
solvable subalgebra of $\ggg$. Now $\Gr(\cb)$ is normalized by $\bbk
x\partial$. Hence, $\Gr(\cb)$ contains $\bbk x\partial$. This
implies that $\cb$ contains  nonzero semi-simple elements, and then
contains some maximal torus of $\ggg$. Therefore,  the above
proposition covers the main result of \cite{YC}.

The above arguments of classifying Borels for $W(1)$  will be extended to the general case in the next sections. The ideas are essentially the same.
\end{remark}


\section{Standard  Borel subalgebras}

We maintain the notations as before.

\subsection{Borel subspaces} \label{Borels} We will first introduce  $(n+1)$ vector subspaces $\tsb_q$ in $W(n)$, $q=0,1,\cdots,n$.
Recall $\tsb_0=\bbb+W(n)_1$, where $W_1=\sum_{i\geqslant 1}W(n)_{[i]}$.
Next we set
$$\tsb_n=W(n)_{[-1]}+\bbb+ \sum_{q=1}^n\sum_{{\bf{a}}(q)}\bbk{\bf{x}}^{{\bf{a}}(q)} \partial_q,$$
where ${\bf{x}}=(x_1,\cdots,x_n)$ and ${\bf{a}}(q):=(a_1,\cdots,a_{q},0,\cdots,0)\in \bbp^{n}$ with $|{\bf{a}}(q)|>1$, $a_i$ runs through $\bbp$ for $i=1,\cdots,q-1$, and $a_q=0, 1$. We call $\tsb_0$ and $\tsb_n$ {\sl the nought-switched Borel subspace, and the full-switched Borel subspace} respectively.

Before introducing  general Borel subspaces $\tsb_q$, we need some conventions as below (some have appeared before, but now  presented formally).
\begin{conventions}\label{conventions}
Let $z_i$ be either $x_i$ or $(1+x_i)$, $i=1,\cdots,n$.
For  a subsequence ${\bf{u}}=(u_1,\cdots,u_q)$ of the sequence $(z_1,\cdots,z_n)$， i.e, $\{u_1,\cdots,u_q\}\subset\{z_1,\cdots,z_n\}$
we adopt the notations
\begin{itemize}
\item[(1)] Set $\tsb_0(u_1,\cdots,u_q)$ and $\tsb_q(u_1,\cdots,u_q)$ to be the nought-switched Borel subspace, and the full-switched Borel subspace of $W(q)$ respectively. Here $W(q)$ is the derivation algebra of $A(q)=\bbk[u_1,\cdots,u_q]$, which is the subalgebra of the truncated polynomial algebra $A(n)=\bbk[x_1,\cdots,x_n]$ generated by $u_1,\cdots,u_q$. For distinguishing indeterminants,  we sometimes need the notations $A(u_1,\cdots,u_q)$ and $W(u_1,\cdots,u_q)$ standing for those $A(q)$ and $W(q)$ respectively.
\item[(2)] Set $\ttt_0(u_1,\cdots,u_q):=x_{i_1}\partial_{i_1}+\cdots+x_{i_q}\partial_{i_q}$, and
$\ttt_q(u_1,\cdots,u_q)=(1+x_{i_1})\partial_{i_1}+\cdots+(1+x_{i_q})\partial_{i_q}$
when $(u_1,\cdots,u_q)=(x_{i_1},\cdots,x_{i_q})$. Those $\ttt_0$ and $\ttt_q$ mean the first and last standard tori of $W(x_{i_1},\cdots,x_{i_q})$ respectively.
\item[(3)] Set ${\bf{u}}^{\bf{a}}:=u_1^{a_1}\cdots u_q^{a_q}$
if ${\bf{a}}=(a_1,\cdots,a_q)\in \bbp^q$.
\end{itemize}
\end{conventions}
 It is the position for us to define the $q$th Borel subspace $\tsb_q$.  Let us first take ${\bf{u}}=(x_1,\cdots,x_{n-q})$ and ${\bf{w}}=(x_{n-q+1},\cdots, x_n)$. Define
$$\tsb_q=\tsb_{0}(x_1,\cdots,x_{n-q})+Q_q +\tsb_{q}(x_{n-q+1},\cdots,x_n),$$
where
\begin{align}\label{Qq}Q_q=\sum_{i=1}^{n-q}\sum_{{\bf{a}}(i),{\bf{b}}(i)}\bbk{\bf{u}}^{{\bf{a}}(i)}{\bf{w}}^{{\bf{b}}(i)} \partial_i +\sum_{j=n-q+1}^n\sum_{{\bf{a}}(j);{\bf{b}}(j)}\bbk{\bf{u}}^{{\bf{a}}(j)}{\bf{w}}^{{\bf{b}}(j)} \partial_j,
\end{align}
where ${\bf{a}}(i):=(a_1,\cdots,a_{n-q})\in \bbp^{n-q}$ is subjected to the condition that either $|{\bf{a}}(i)|>1$ or $|{\bf{a}}(i)|=1=a_1+\cdots+a_i$, while ${\bf{a}}(j)\in \bbp^{n-q}$ is subjected to the condition $|{\bf{a}}(j)|>0$, and ${\bf{b}}(-):=(b_{n-q+1},\cdots,b_n)$ runs through $\bbp^{q}$ for $(-)=(i), (j)$, subjected to the condition $|{\bf{b}}(i)|>0$.

In the sequent subsections, we will prove that all $\tsb_q$ are Borel subalgebras of $W(n)$.

\subsection{Switched positive root systems of the rigid root system} Set $E=\{-1, 0\}$. Consider a subset $\Delta$ of $\Gamma:=\bbp^n\times E^n$:
$$\Delta=\{(a_1,\cdots,a_n)\times (\eta_1,\cdots,\eta_n)\in\Gamma\mid \eta_1+\cdots+\eta_n=-1\},$$
and then set
$$\bar\Delta=\Delta\cup \{\infty\}.$$
For $\alpha=(\sum_ia_i\epsilon_i)\times(-\epsilon_s)$ and $\beta=(\sum_ib_i\epsilon_i)\times(-\epsilon_t)\in \Delta$, we define an ordered operator "$+: \bar\Delta\times \bar\Delta\rightarrow \bar\Delta$" as $\alpha+\beta:=(\sum_i(a_i+b_i)\epsilon_i-\epsilon_s)\times(-\epsilon_t)$ if it lies in $\Delta$, and $\alpha+\beta:=\infty$ otherwise; and define $\infty+\mbox{any}=\mbox{any}+\infty=\infty$.
Then $W(n)$ can be decomposed into a direct sum of one-dimensional root spaces, associated with the so-called rigid root system as below:
$$W(n)=\sum_{\alpha\in\bar\Delta}W(n)_\alpha$$
where $W(n)_\alpha:=\bbk x_1^{a_1}\cdots x_n^{a_n}\partial_j$ for $\alpha=(a_1,\cdots,a_n)\times (-\epsilon_j)$, and $W(n)_\infty:=0$.
We call $\bar\Delta$ the {\sl rigid root system} of $W(n)$. Then we have
\begin{align}\label{productformula}
[W(n)_\alpha,W(n)_\beta]\subset W(n)_{\alpha+\beta}+W(n)_{\beta+\alpha}.
\end{align}
Associated with the naught-switched, and full-switched Borel subspaces, we define positive root systems $\bar\Delta(0)_+$, and $\bar\Delta(n)_+$ respectively as below
\begin{align*}
\bar\Delta(0)_+:=&\{\epsilon_i\times(-\epsilon_j)\mid 1\leqslant i\leqslant j\leqslant n\}\cup\{\infty\}\cup \cr
&\{\sum_{i=1}^n a_i\epsilon_i\times (-\epsilon_j)\mid a_1+\cdots+a_n>1, j=1,\cdots,n\};\cr
\bar\Delta(n)_+:=&\{0\times(-\epsilon_j)\mid j=1,\cdots,n\}
\cup\{\epsilon_i\times(-\epsilon_j)\mid 1\leqslant i\leqslant j\leqslant n\}\cup\{\infty\}\cup\cr
& \{\sum_{i=1}^n a_i\epsilon_i\times (-\epsilon_j)\mid  a_1+\cdots+a_n=a_1+\cdots +a_j>1; a_j=0,\mbox{ or }1;j=1,2,\cdots,n\}.
\end{align*}
Then $\tsb_0=\sum_{\alpha\in \bar\Delta(0)_+}W(n)_\alpha$ and $\tsb_n=\sum_{\alpha\in \Bar\Delta(n)_+}W(n)_\alpha$. Generally, we set
$$\Bar\Delta(q)_+=\Bar\Delta^{(x_1,\cdots,x_{n-q})}(0)_+\cup\Bar\Delta^{(x_{n-q+1},\cdots,x_n)}(q)_+\cup \Bar\Delta\{Q\}_+,$$
where $\Bar\Delta^{(x_1,\cdots,x_{n-q})}(0)_+$, and $\Bar\Delta^{(x_{n-q+1},\cdots,x_n)}(q)_+$ denote the nought-switched positive root system and full-switched positive root system associated with $\bbk[x_1,\cdots,x_{n-q}]$ and $\bbk[x_{n-q+1},\cdots,x_n]$ respectively; and $\Bar\Delta\{Q\}_+$ denotes a subset of $\Bar\Delta_+$ as below:
\begin{align*}
\bar\Delta\{Q\}_+=&\{(\sum_{i=1}^{n-q}a_i\epsilon_i+\sum_{j=n-q+1}^nb_j\epsilon_j)\times(-\epsilon_r)\mid 1\leqslant r\leqslant n-q,
\mbox{either } a_1+\cdots+a_{n-q}>1 \cr
&\mbox{ or }   a_1+\cdots+a_{n-q}=a_1+\cdots+a_r=1; b_{n-q+1}+\cdots +b_n>0  \}\cup\{\infty\}\cup\cr
&\{(\sum_{i=1}^{n-q}a_i\epsilon_i+\sum_{j=n-q+1}^nb_j\epsilon_j)\times(-\epsilon_s)\mid n-q+1\leqslant s\leqslant n,
a_1+\cdots+a_{n-q}>0\}.
\end{align*}
By the above construction, we have
\begin{lemma} \label{subalgebras} Maintain the notations as above.
The following statements hold.
\begin{itemize}
\item[(1)] $\tsb_q=\sum_{\alpha\in \Bar\Delta(q)_+}W(n)_\alpha$, $q=0,1,\cdots,n$.
\item[(2)] $\tsb_q$ contains the maximal tori $\ttt_r$, $r=0,1,\cdots,q$.
\item[(3)] Furthermore, $\tsb_q$ is a restricted $\ttt_r$-module for $r=0,1,\cdots,q$. As a $\ttt_0$-module the weight system is described as below, in the sense
of (\ref{homo weights}):
$$\{{\bf{a}}=(a_1,\cdots,a_n) \in \bbf_p^n\mid  a_i\mbox{ coinicides with the } i\mbox{th entriy of }\overline\alpha \mbox{ for }\alpha\in \Delta(q)_+\}$$
where $\overline\alpha=(\cdot)+(\cdot\cdot)$ for $\alpha=(\cdot)\times(\cdot\cdot)$ and $\Delta(q)_+=\bar\Delta(q)_+\backslash \{\infty\}$.
\item[(4)] $\tsb_q$ is a subalgebra of $W(n)$.
\end{itemize}
\end{lemma}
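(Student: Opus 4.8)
The plan is to deduce the subalgebra property from a combinatorial closure statement for the index set $\bar\Delta(q)_+$, but in a refined form that remembers the structure constants. By part (1) we have $\tsb_q=\sum_{\alpha\in\bar\Delta(q)_+}W(n)_\alpha$, so it suffices to check $[W(n)_\alpha,W(n)_\beta]\subseteq\tsb_q$ for all $\alpha,\beta\in\bar\Delta(q)_+$. Writing $\alpha=A\times(-\epsilon_s)$ and $\beta=B\times(-\epsilon_t)$ with $A=\sum_ia_i\epsilon_i$, $B=\sum_ib_i\epsilon_i$, the bracket $[{\bf x}^A\partial_s,{\bf x}^B\partial_t]=b_s{\bf x}^{A+B-\epsilon_s}\partial_t-a_t{\bf x}^{A+B-\epsilon_t}\partial_s$ shows that the $W(n)_{\alpha+\beta}$-component carries the coefficient $b_s$ and the $W(n)_{\beta+\alpha}$-component the coefficient $a_t$; this is the precise form of (\ref{productformula}). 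Hence everything reduces to the implication that if $b_s\neq0$ then $\alpha+\beta\in\bar\Delta(q)_+$, the companion statement for $\beta+\alpha$ following by interchanging $\alpha$ and $\beta$. I would emphasise that the coarser assertion ``$\bar\Delta(q)_+$ is closed under $+$'' is genuinely \emph{false}, and that the hypothesis $b_s\neq0$ --- that the variable $x_s$ actually occurs in the monomial of $\beta$ --- is exactly what makes the offending configurations vanish.

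For the verification I would first merge the three defining pieces of $\bar\Delta(q)_+$ into a single criterion, separating the indices into the $u$-block $\{1,\dots,n-q\}$ and the $w$-block $\{n-q+1,\dots,n\}$ and writing $C_u,C_w$ for the two parts of an exponent $C$. For a root $C\times(-\epsilon_r)$ this reads: when $r\leqslant n-q$, either $|C_u|\geqslant2$, or $|C_u|=1$ with its surviving coordinate at a position $\leqslant r$, and $C_w$ is free; when $r>n-q$, either $|C_u|\geqslant1$, or else $C_u=0$ together with the full-varied condition on the $w$-block, which is equivalent to $\mathrm{supp}(C_w)\subseteq\{n-q+1,\dots,r\}$ and $c_r\leqslant1$. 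Two bookkeeping facts then drive the argument: the degree vectors add, $\overline{\alpha+\beta}=\overline\alpha+\overline\beta$ in the notation of part (3); and since $b_s\neq0$ keeps the $s$-th exponent of $A+B-\epsilon_s$ non-negative, the only way to produce the value $\infty$ is an exponent overflow, and as $\infty\in\bar\Delta(q)_+$ this is harmless, so I may assume $\alpha+\beta\in\Delta$.

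I would then run the case analysis according to the blocks containing $s$ and $t$ (the index $t$ of $\beta$ being also that of $\alpha+\beta$). A root whose index lies in the $u$-block automatically has positive $u$-degree, and $|C_u|$ equals $|A_u|+|B_u|$, dropping by one exactly when $s$ lies in the $u$-block. When $t>n-q$ the criterion holds as soon as $|C_u|>0$, so every subcase is immediate except $A_u=B_u=0$, which forces $s>n-q$. When $t\leqslant n-q$ one has $|B_u|\geqslant1$, and in the borderline $|C_u|=1$ case a short check --- using $b_s\neq0$ to locate the support of $B_u$ when $s\leqslant n-q$ --- shows the surviving $u$-coordinate sits at a position $\leqslant t$, as required.

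The genuine obstacle is the remaining full-varied core, where $A_u=B_u=0$ and both indices lie in the $w$-block, so that the problem collapses to closure of the full-varied system on the $w$-variables. Here the decisive point is that $b_s\neq0$ and $\mathrm{supp}(B)\subseteq\{n-q+1,\dots,t\}$ together force $s\leqslant t$; consequently $\mathrm{supp}(A)\subseteq\{n-q+1,\dots,s\}\subseteq\{n-q+1,\dots,t\}$, whence $\mathrm{supp}(A+B-\epsilon_s)\subseteq\{n-q+1,\dots,t\}$, and the $t$-th coordinate stays $\leqslant1$: if $s<t$ it equals $b_t\leqslant1$, while if $s=t$ the constraint $b_t\leqslant1$ forces $b_s=1$ and it equals $a_t+b_t-1=a_t\leqslant1$. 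Thus $\alpha+\beta$ remains full-varied. I expect this last bookkeeping with the exponent of the $t$-th variable --- and the realisation that vanishing structure constants, not a closed root set, are what save the naive cases --- to be the only subtle part of the proof.
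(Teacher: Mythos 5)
Your proof is correct, and although it pursues the same root-combinatorial strategy as the paper --- write $\tsb_q=\sum_{\alpha\in\bar\Delta(q)_+}W(n)_\alpha$ and reduce part (4) to arithmetic in the index set --- its execution differs at the decisive point, and to your advantage. The paper's proof decomposes $\tsb_q=\tsb_0(x_1,\cdots,x_{n-q})+Q_q+\tsb_q(x_{n-q+1},\cdots,x_n)$, claims by ``a direct check'' that the ordered operation ``$+$'' is closed in $\bar\Delta(0)_+$ and $\bar\Delta(n)_+$, and disposes of the cross terms by asserting that $Q_q$ is normalized by the other two summands. Your warning that the unconditional closure is false is borne out: for $\alpha=(\epsilon_1+\epsilon_2)\times(-\epsilon_2)$ (i.e.\ $x_1x_2\partial_2$) and $\beta=\epsilon_1\times(-\epsilon_1)$ (i.e.\ $x_1\partial_1$), both in $\bar\Delta(n)_+$, the definition gives $\alpha+\beta=2\epsilon_1\times(-\epsilon_1)$, which lies in $\Delta$ but not in $\bar\Delta(n)_+$ --- and indeed $x_1^2\partial_1$ cannot lie in $\tsb_n$, since $\partial_1,x_1\partial_1,x_1^2\partial_1$ generate a non-solvable algebra; the actual bracket $[x_1x_2\partial_2,x_1\partial_1]=-x_1x_2\partial_2$ stays inside $\tsb_n$ only because the structure constant $b_s=b_2$ vanishes. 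Thus your refined lemma --- if $b_s\neq0$ then $\alpha+\beta\in\bar\Delta(q)_+$, with the companion statement for $a_t$ and $\beta+\alpha$ --- is exactly the correct substitute for the closure claim, and I have checked your merged criterion for $\bar\Delta(q)_+$ and the ensuing case analysis (including the full-varied core, where $b_s\neq0$ forces $s\leqslant t$ and keeps the $t$-th exponent $\leqslant1$, treating $s<t$ and $s=t$ separately); it is complete, with the overflow-to-$\infty$ cases correctly dismissed since $W(n)_\infty=0$. A side benefit of your single uniform criterion is that it bypasses the paper's normalization claim for $Q_q$, which is likewise only true modulo the other summands (e.g.\ $[\partial_n,x_1x_n\partial_1]=x_1\partial_1$ lies in $\tsb_0(x_1,\cdots,x_{n-q})$ but not in $Q_q$). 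In short, the paper's three-piece decomposition buys brevity and reuse of the $q=0$ and $q=n$ checks, while your conditional closure statement, tracking the coefficients $b_s$ and $-a_t$ in $[{\bf{x}}^A\partial_s,{\bf{x}}^B\partial_t]$, is what makes this route fully rigorous; your proof can fairly be read as a corrected and completed version of the paper's argument.
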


\begin{proof} The first three statements  are clear. In order to prove the last one, we only need to check the operator "$+$" is closed in $\Bar\Delta(q)_+$  in view of  the first three statements, and Formula (\ref{productformula}), along with the homogenous condition. By a direct computation, we easily know that it is closed in $\Bar\Delta(0)_+$ and $\Bar\Delta(n)_+$. As to an arbitrary $q$, we note that in
$$\tsb_q=\tsb_{0}(x_1,\cdots,x_{n-q})+Q_q +\tsb_{q}(x_{n-q+1},\cdots,x_n),$$
the first and last summands are already known to be subalgebras, mutually orthogonal (i.e. the Lie brackets between them are  $0$). Furthermore, $Q_q$ is normalized by the first summand, and $[Q_q,\tsb_{q}(x_{n-q+1},\cdots,x_n)]\subset \tsb_{0}(x_1,\cdots,x_{n-q})+Q_q$.
 So we only need to prove that $Q_q$ is a subalgebra. For this, it is sufficient to check that the operator "$+$" is closed in $\Bar\Delta\{Q\}_+$. By a straightforward and easy computation, "$+$" is closed there.
\end{proof}

\subsection{Standard Borel subalgebras}\label{standardborels} In the concluding subsection, we prove all $\tsb_q$ are Borel subalgebras of $W(n)$.
\begin{prop} Maintain the notations as before.
 All Borel subspaces $\tsb_r$, $r=0, 1,2,\cdots,n$, are Borel subalgebras of $W(n)$, called the standard Borel subalgebras.
\end{prop}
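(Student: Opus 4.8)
The plan is to show each $\tsb_q$ is (i) solvable and (ii) maximal among solvable subalgebras. By Lemma~\ref{subalgebras}(4) each $\tsb_q$ is already known to be a subalgebra containing the torus $\ttt_r$ for $r=0,1,\dots,q$, so I can freely exploit the $\ttt_r$-grading supplied by Corollary~\ref{theSecondKeyLemma}. The two extreme cases $\tsb_0$ and $\tsb_n$ should be treated as the base of the argument, and the general $\tsb_q$ handled by exploiting the decomposition
$$\tsb_q=\tsb_{0}(x_1,\cdots,x_{n-q})+Q_q +\tsb_{q}(x_{n-q+1},\cdots,x_n).$$

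First I would establish solvability. For $\tsb_0=\bbb+W(n)_1$ this is already done inside the proof of Lemma~\ref{The Third Key Lemma}: the derived series lands in the nilpotent ideal $W(n)_1$, since $\bbb$ is solvable in $W(n)_{[0]}\cong\gl(n,\bbk)$ and $W(n)_1$ is the nilradical of the filtered structure. For $\tsb_n$ and the general $\tsb_q$, the key observation is that, relative to the $\ttt_0$-grading, the positive-degree part is a nilpotent ideal and the degree-zero part $\tsb_q\cap W(n)_{[0]}$ is a Borel subalgebra of $\gl(n,\bbk)$. Concretely I would argue that $[\tsb_q,\tsb_q]$ is contained in the sum of the strictly upper-triangular part of $W(n)_{[0]}$ together with all strictly positive graded pieces, and that repeated commutators push the degree up (or the nilpotency of the $\gl(n)$-part takes over), so the derived series terminates. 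The decomposition above makes this transparent: the first and last summands are solvable Borel spaces of smaller Witt algebras (by the $\tsb_0$ and $\tsb_n$ cases), and $Q_q$ is a nilpotent ideal normalized by both, so solvability of the whole follows from solvability of the two flanking pieces.

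The harder half is \emph{maximal} solvability, and this is where I expect the main obstacle to lie. The strategy is: suppose $\cb\supsetneq\tsb_q$ is solvable; since $\tsb_q\supset\ttt_q$, any larger solvable subalgebra still contains $\ttt_q$ (indeed $\ttt_r$ for the relevant $r$), so by Corollary~\ref{theSecondKeyLemma} $\cb$ is $\ttt_r$-graded and hence a sum of root spaces $W(n)_\alpha$. Thus there is a root $\alpha\in\bar\Delta\setminus\bar\Delta(q)_+$ with $W(n)_\alpha\subset\cb$. I would then produce, by bracketing $W(n)_\alpha$ against elements already in $\tsb_q$ and using the product formula~(\ref{productformula}), a copy of $\gl(2)$-type or an $\fs\fl_2$-triple, or more directly enough root vectors to force a non-solvable (e.g. non-triangularizable) subalgebra, contradicting solvability. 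The delicate point is doing this case-analysis uniformly over all ``missing'' roots $\alpha$, keeping careful track of how the varied coordinates $z_j=1+x_j$ interact with the grading near degree $-1$; the combinatorics of which $\alpha$ can be adjoined without violating solvability is exactly the content encoded in the definition of $\bar\Delta(q)_+$.

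Finally I would reduce the general statement to the extreme cases already verified inside the earlier lemmas. Because $W(n)=W(x_1,\dots,x_{n-q})\oplus\text{(mixed)}\oplus W(x_{n-q+1},\dots,x_n)$ as $\ttt_r$-modules, maximal solvability of $\tsb_q$ should follow from maximal solvability of $\tsb_0$ in the first factor and of $\tsb_n$ in the second, together with the claim that $Q_q$ already exhausts all admissible ``mixed'' root vectors one may adjoin while preserving solvability. For $\tsb_0$ this is essentially Lemma~\ref{The Third Key Lemma}(2), and for $\tsb_n$ it is the new $n$-varied computation; both feed into the inductive/product structure. I expect the write-up to spend most of its effort on the root-theoretic maximality argument, verifying that adjoining any single omitted root space to $\tsb_q$ generates a non-solvable subalgebra.
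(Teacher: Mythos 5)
You have reproduced the paper's overall architecture (solvability plus maximality, with $\tsb_0$, $\tsb_n$ as base cases and the three-summand decomposition for general $q$), but both halves of your plan contain genuine gaps. First, solvability of $\tsb_n$: your key structural claims are false. Since $W(n)_{[-1]}\subset\tsb_n$, the positive-degree part is \emph{not} an ideal of $\tsb_n$ (bracketing with $\partial_i$ lowers degree), and the derived algebra is not contained in the strictly upper-triangular part plus positive degrees: for instance $\partial_1\in\tsb_n$ and $x_1\partial_2\in\bbb\subset\tsb_n$ give $[\partial_1,x_1\partial_2]=\partial_2\in[\tsb_n,\tsb_n]$, which has degree $-1$. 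So ``repeated commutators push the degree up'' fails exactly where it matters. The paper's mechanism is different: it observes $\tsb_n^{(n+1)}\subset\tsb_0$ (so solvability of $\tsb_n$ reduces to that of $\tsb_0$), and for general $q$ that $Q_q^{(q)}\subset W(n)_1$; since your treatment of general $\tsb_q$ leans on the $\tsb_n$ case, the solvability half of your argument is broken, not merely unpolished.

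Second, maximality: your reduction ``$\cb$ is $\ttt_r$-graded and hence a sum of root spaces $W(n)_\alpha$'' is unjustified and in fact false. Corollary \ref{theSecondKeyLemma} only gives a decomposition into $\ttt_r$-\emph{weight} spaces, and the proof of Lemma \ref{theSecondKeyLemma-1} shows each weight space of $W(n)$ is spanned by up to $n$ monomials ${\bf{z}}^{\alpha+\epsilon_i}\partial_i$, $i=1,\cdots,n$; a weight component of a solvable $\frakh\supsetneqq\tsb_q$ may therefore be a linear combination of several monomials none of which lies in $\frakh$, so you cannot start from a single rigid root vector. This is precisely why the paper's proof works with general weight vectors $X=\sum C_{{\bf{b}}(q)}{\bf{x}}^{{\bf{b}}(q)}\partial_q$ and applies suitable sequences of $\ad D$ for degree $-1$ elements $D$ already in the Borel space to isolate monomials, before exhibiting the non-solvable triples $x_i\partial_j,\, x_j\partial_i,\, x_i\partial_i-x_j\partial_j$ or $\partial_q,\, x_q\partial_q,\, x_q^2\partial_q$ --- the same contradictions you gesture at. Your outer frame for general $q$ (intersect $\frakh$ with $W(x_1,\cdots,x_{n-q})$ and $W(x_{n-q+1},\cdots,x_n)$, invoke maximality of the flanking Borel spaces there, then rule out extra mixed vectors) does coincide with the paper's squeeze $\tsb_q\subseteq\frakh\subseteq\tsb_0(x_1,\cdots,x_{n-q})+\overline{Q_q}+\tsb_q(x_{n-q+1},\cdots,x_n)$ and the analysis of $X\in\overline{Q_q}\backslash Q_q$; but you explicitly defer the entire case analysis over the ``missing'' roots, which is the actual content of the proof. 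As it stands, the proposal is a correct table of contents with a false solvability mechanism and an unproved (indeed false) monomial reduction at the heart of the maximality argument.
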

\begin{proof} By the definition of Borel subspaces, every Borel subspace contains the maximal torus $\ttt_0$ of $W(n)$.
 According to Lemma \ref{subalgebras}(4), those Borel subspaces are subalgebras of $W(n)$. What we need to do  are the following two,
 one is to verify the solvableness of Borel subspaces, and the other one is to do for the maximality.

(1) We want to prove that all the subalgebras $\tsb_q$ are solvable. 
By Lemma \ref{The Third Key Lemma}, we know $\tsb_0$ is a solvable subalgebra.  As to $\tsb_n$, we first observe that $\tsb_n^{(n+1)}\subset \tsb_0$, from which it follows that $\tsb_n^{(n+1)}$ is solvable. Hence $\tsb_n$ itself is solvable.

For an arbitrary  $q$, recall $\tsb_q=\tsb_{0}(x_1,\cdots,x_{n-q})+Q_q +\tsb_{q}(x_{n-q+1},\cdots,x_n)$. The first and last summands have been known solvable subalgebras. Note that the middle summand subalgebra $Q_q$ is normalized by the first one and that the sum of the first and second summands are normalized by the third one.
So we only need to verify that $Q_q$  is solvable. This follows since   $Q_q^{(q)}\subset W(n)_1$, the latter of which is solvable.

(2) We want to prove that  all  solvable subalgebras $\tsb_q$ are maximal. 

For $\tsb_0=\bbb+W(n)_1$, the maximality is  easily seen. Actually, if a solvable subalgebra $\frakh\supseteq \tsb_0$, then $\frakh$ is $\ttt_0$-graded by the homogenous condition. Then $\frakh=\sum_{q}\frakh_{[q]}$ with $\frakh_{[q]}=\frakh\cap W(n)_{[q]}$. Note that the subalgebra generated by $\partial_i,x_i\partial_i,x_i^2\partial_i$ is not solvable. So $\frakh_{[-1]}$ must be zero.  Hence $\frakh\subset W(n)_0$. On the other hand, $\frakh_{[0]}$ is solvable in $W(n)_{[0]}$. However $\bbb\subset \frakh_{[0]}\subset W(n)_{[0]}$, and $\bbb$ is already a Borel subalgebra of $W(n)_{[0]}$. Hence $\frakh_{[0]}$ coincides with $\bbb$. Thus $\frakh=\tsb_0$.

As to $\tsb_n$, recall
$$\tsb_n=W(n)_{[-1]}+\bbb+ \sum_{q=1}^n\sum_{{\bf{a}}(q)}\bbk{\bf{x}}^{{\bf{a}}(q)} \partial_q$$
with the notation ${\bf{x}}=(x_1,\cdots,x_n)$, ${\bf{a}}(q)=(a_1,\cdots,a_{q},0,\cdots,0)\in \bbp^{n}$ with $|{\bf{a}}(q)|>1$, $a_i$ runs through $\bbp$ for $i=1,\cdots,q-1$, and $a_q=0, 1$.
Suppose there is a solvable algebra $\frakh\supseteq \tsb_n$. Similar to the above argument, we  have that  $\frakh=\sum_{q}\frakh_{[q]}$ with $\frakh_{[q]}=\frakh\cap W(n)_{[q]}\supset (\tsb_n)_{[q]}$ with $\frakh_{[-1]}=W(n)_{[-1]}$ and $\frakh_{[0]}=\bbb$. If $\frakh$ contains $\tsb_n$ properly, then there must be a non-zero element $X\in\frakh_\alpha\subset\frakh_{[s]}$ for $s>0$ which is not in $(\tsb_n)_{[s]}$. We can write
$$X=\sum_{q=1}^n \sum_{{\bf{b}}(q)}C_{{\bf{b}}(q)}{\bf{x}}^{{\bf{b}}(q)} \partial_q$$
where  $C_{{\bf{b}}(q)}\in \bbk$, ${\bf{b}}(q)=(b_1,\cdots,b_n)\in \bbp^n$ satisfying either $(b_{q+1},\cdots,b_n)\neq 0$, or $(b_{q+1},\cdots,b_n)=0$ with $b_q>1$,  as long as $C_{{\bf{b}}(q)}\neq 0$. Say,  either  $b_{t}\neq 0$ for some $t>q$, or  $b_q=d\geq 2$.
By applying suitable adjoint actions $\ad D$ for some  $D\in \tsb_{[-1]}$,  one of the following situations happens:
either $\frakh$ contains a subalgebra generated by $x_t\partial_q, x_q\partial_t, x_t\partial_t-x_q\partial_q$, or $\frakh$ contains a subalgebra generated by $x_q^2\partial_q,x_q\partial_q, \partial_q$. But either subalgebra of  two kinds is not solvable, which contradicts  the solvableness of $\frakh$. Hence, $\frakh$ must coincide with $\tsb_n$. We complete the proof of the maximality of $\tsb_n$.

Let us finally investigate 
$\tsb_q$.
 We need to prove that for any solvable subalgebra $\frakh$, the inclusion $\frakh\supseteq \tsb_q$ implies the equality $\frakh=\tsb_q$.
Observe
 that $\frakh\cap W(x_1,\cdots,x_{n-q})\supseteq \tsb_0(x_1,\cdots,x_{n-q})$, along with a result just proved that   $\tsb_0(x_1,\cdots,x_{n-q})$ is a Borel subalgebra of $W(x_1,\cdots,x_{n-q})$. It follows that $\frakh\cap W(x_1,\cdots,x_{n-q})= \tsb_0(x_1,\cdots,x_{n-q})$. By the same reason, we have $$\frakh\cap W(x_{n-q+1},\cdots,x_{n})= \tsb_q(x_{n-q+1},\cdots,x_{n}).$$
 Thus we have
$$  \tsb_q\subseteq \frakh \subseteq \tsb_{0}(x_1,\cdots,x_{n-q})+\overline{Q_q} +\tsb_{q}(x_{n-q+1},\cdots,x_n)$$
where $\overline{Q_q}=\sum_{i=1}^{n-q}\sum_{{\bf{a}}(i),{\bf{b}}(i)}\bbk{\bf{u}}^{{\bf{a}}(i)}{\bf{w}}^{{\bf{b}}(i)} \partial_i +\sum_{j=n-q+1}^n\sum_{{\bf{a}}(j);{\bf{b}}(j)}\bbk{\bf{u}}^{{\bf{a}}(j)}{\bf{w}}^{{\bf{b}}(j)} \partial_j$,
 ${\bf{u}}=(x_1,\cdots,x_{n-q})$ and ${\bf{w}}=(x_{n-q+1},\cdots,x_n)$ , and  ${\bf{a}}(-):=(a_1,\cdots,a_{n-q})$ run through $\bbp^{n-q}$ for $(-)=(i),(j)$, and ${\bf{b}}(-):=(b_{n-q+1},\cdots,b_n)$ run through $\bbp^{q}$ for $(-)=(i),(j)$, subjected to the condition that $|{\bf{a}}(j)|>0$ and $|{\bf{b}}(i)|>0$.

Suppose $\frakh\supsetneqq \tsb_q$. Then there must be some element $X\in \overline{Q_q}\backslash Q_q$. Comparing $\overline{Q_q}$ and $Q_q$, we have
\begin{align*}
X=\sum_{i=1}^{n-q}\sum_{{\bf{a}}(i),{\bf{b}}(i)}C_{{\bf{a}}(i),{\bf{b}}(i)}{\bf{u}}^{{\bf{a}}(i)}{\bf{w}}^{{\bf{b}}(i)} \partial_i
\end{align*}
with  $C_{{\bf{a}}(i),{\bf{b}}(i)}\in\bbk$, and with ${{\bf{a}}(i),{\bf{b}}(i)}$ violating the condition listed below (\ref{Qq}) as long as $C_{{\bf{a}}(i),{\bf{b}}(i)}\neq 0$. This is to say, there exists $j$: $1\leqslant i<j \leqslant n-q$ satisfying  $|{\bf{a}}(i)|=1=a_j$ while   $C_{{\bf{a}}(i),{\bf{b}}(i)}\neq 0$. By the same arguments as previously, we conclude that $\frakh$ contains $x_j\partial_i, x_i\partial_j$ and $x_i\partial_i-x_j\partial_j$, thereby contains the subalgebra generated by them, which is not solvable. This contradicts  the hypothesis that $\frakh$ is solvable. Consequently, we must have   $\frakh=\tsb_q$. Thus, we complete the proof of the maximality of the solvable subalgebras $\tsb_q$.
\end{proof}

\section{Conjugacy classes of homogeneous Borel subalgebras}
Maintain the notations and conventions as before. Especially, the characteristic $p$ of the ground field $\bbk$ is assumed to be bigger than $3$ throughout this section.
\subsection{}\label{r B} In this section, we will prove that the standard  Borel subalgebras are representatives of conjugacy classes of all homogeneous Borel subalgebras. Let $\cb$ be any given Borel subalgebra of $W(n)$. By the definition of Borel subalgebras, $\cb$ contains a maximal torus. In view of Theorem \ref{Demuskintori}, we introduce an invariant $\textsf{r}(\cb)$ as follows
$$\textsf{r}(\cb):=\max\{r\mid \mbox{ there exists }\sigma\in \Aut(W(n))\mbox{ suth that } \sigma(\ttt_r)\subset \cb\}$$
 associated with the conjugacy class of $\cb$.
The following conclusion is obvious.
\begin{lemma}
$\textsf{r}(\tsb_r)=r$.
\end{lemma}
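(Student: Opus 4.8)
The plan is to prove the two inequalities $\textsf{r}(\tsb_r)\ge r$ and $\textsf{r}(\tsb_r)\le r$ separately. The lower bound is immediate: by Lemma \ref{subalgebras}(2) the standard Borel $\tsb_r$ already contains the standard torus $\ttt_r$, so taking $\sigma=\id$ shows that $r$ belongs to the set defining $\textsf{r}(\tsb_r)$, whence $\textsf{r}(\tsb_r)\ge r$.

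For the upper bound the idea is to read off the conjugacy type of a torus from its image under the projection $\pi\colon\ggg\to\ggg/\ggg_0\cong\ggg_{[-1]}$. Every maximal torus of $W(n)$ has dimension $n$ (each is conjugate to some $\ttt_r$ by Theorem \ref{Demuskintori}(2)), and $\ker(\pi|_\ttt)=\ttt\cap\ggg_0$, so rank--nullity gives $\dim\pi(\ttt)=n-\dim(\ttt\cap\ggg_0)$ for any maximal torus $\ttt$. By Theorem \ref{Demuskintori}(1) the quantity $\dim(\ttt\cap\ggg_0)$ is a $G$-conjugacy invariant, and a direct computation yields $\dim(\ttt_{r'}\cap\ggg_0)=n-r'$: the toral generators $x_i\partial_i$ lie in $\ggg_{[0]}$, whereas $(1+x_i)\partial_i=\partial_i+x_i\partial_i$ contributes a nonzero $\ggg_{[-1]}$-component, so exactly the $r'$ generators of the second kind leave $\ggg_0$. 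Hence a maximal torus in the class $[\ttt_{r'}]$ has $\dim\pi(\ttt)=r'$.

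It remains to bound $\dim\pi(\tsb_r)$. Since $\tsb_r$ contains $\ttt_0$, Corollary \ref{theSecondKeyLemma} shows $\tsb_r$ is standard-graded, so $\pi(\tsb_r)=(\tsb_r)_{[-1]}$; I would then identify this degree $(-1)$ piece from the root-space description in Lemma \ref{subalgebras}(1). The degree $(-1)$ root vectors of $W(n)$ are precisely the $\partial_j$, corresponding to roots $0\times(-\epsilon_j)$, and inspection of $\bar\Delta(r)_+$ shows that only the $r$ such roots with $n-r+1\le j\le n$ occur: they come from the full-varied block $\tsb_r(x_{n-r+1},\ldots,x_n)$, while the naught-varied block $\tsb_0(x_1,\ldots,x_{n-r})$ and the middle block $Q_r$ contain no element of degree $-1$ (every monomial there has polynomial degree $\ge 1$). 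Thus $\dim\pi(\tsb_r)=r$. Consequently any torus $\sigma(\ttt_{r'})\subset\tsb_r$ satisfies $r'=\dim\pi(\sigma(\ttt_{r'}))\le\dim\pi(\tsb_r)=r$, giving $\textsf{r}(\tsb_r)\le r$ and completing the proof.

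The only mildly delicate point, which I would flag as the step to get right rather than a genuine obstacle, is the bookkeeping that pins down $(\tsb_r)_{[-1]}$ exactly; everything else reduces to the $\Aut(W(n))$-invariance of $\ggg_0$ together with the conjugacy criterion of Theorem \ref{Demuskintori}.
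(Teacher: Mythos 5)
Your proof is correct. There is nothing in the paper to compare it against: the author states this lemma with the remark ``The following conclusion is obvious'' and gives no proof at all. Your argument supplies exactly the justification one would want — the lower bound from Lemma \ref{subalgebras}(2) via $\ttt_r\subset\tsb_r$, and the upper bound from the conjugacy invariant $\dim(\ttt\cap\ggg_0)$ of Theorem \ref{Demuskintori}(1) (equivalently, from the $\Aut(W(n))$-invariance of the filtration) combined with the correct identification $(\tsb_r)_{[-1]}=\sum_{j=n-r+1}^{n}\bbk\,\partial_j$, which you verify by checking that the nought-varied block and $Q_r$ contribute nothing in degree $-1$. Note also that your projection invariant $\dim\pi(\ttt)=n-\dim(\ttt\cap\ggg_0)$ is precisely the quantity $\textsf{d}$ the paper only introduces later, in Corollary \ref{Invar}, as a consequence of the full classification; your computation establishes $\textsf{d}(\tsb_r)=\textsf{r}(\tsb_r)=r$ for the standard Borel subalgebras directly, without the classification, which is a small but genuine bonus over the paper's bare assertion.
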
\label{rBequalto0}
\subsection{} Let us begin the arguments with two special cases $\textsf{r}(\cb)=0$ and $\textsf{r}(\cb)=n$.
\begin{lemma}
Assume $\cb$ is a homogeneous  Borel subalgebra of $W(n)$ with $\textsf{r}(\cb)=0$.  Then $\cb\cong \tsb_0$.
\end{lemma}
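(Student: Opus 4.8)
The plan is to reduce to the situation where the standard torus $\ttt_0$ lies inside $\cb$, then to show that the hypothesis $\textsf{r}(\cb)=0$ forces $\cb$ into the non-negative part $W(n)_0$ of the standard grading, and finally to quote the classification of Borel subalgebras of $W(n)_0$. First I would use that $\textsf{r}(\cb)=0\geq 0$ provides $\sigma\in\Aut(W(n))$ with $\sigma(\ttt_0)\subset\cb$; replacing $\cb$ by $\sigma^{-1}(\cb)$, which leaves $\textsf{r}$ unchanged, I may assume $\ttt_0\subset\cb$. Since $\cb$ contains $\ttt_0$, Corollary \ref{theSecondKeyLemma} gives that $\cb$ is standard-graded, $\cb=\sum_i\cb_{[i]}$ with $\cb_{[i]}=\cb\cap W(n)_{[i]}$.

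The heart of the argument, and the step I expect to be the main obstacle, is the claim $\cb_{[-1]}=0$. Suppose not, and pick $0\neq D=\sum_i c_i\partial_i\in\cb_{[-1]}$. Bracketing $D$ against the toral elements $x_j\partial_j\in\ttt_0$ and using $[x_j\partial_j,\partial_i]=-\delta_{ij}\partial_j$, I obtain $\partial_j\in\cb$ whenever $c_j\neq 0$. Fixing such a $j$, the relation $x_j\partial_j\in\ttt_0\subset\cb$ then forces $(1+x_j)\partial_j=\partial_j+x_j\partial_j\in\cb$. The elements $\{x_i\partial_i\mid i\neq j\}\cup\{(1+x_j)\partial_j\}$ are pairwise commuting semisimple elements spanning a maximal torus $\ttt'\subset\cb$ of dimension $n$ with $\dim(\ttt'\cap\ggg_0)=n-1$. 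By Theorem \ref{Demuskintori}(1) any such torus is conjugate to $\ttt_1$, so $\textsf{r}(\cb)\geq 1$, contradicting $\textsf{r}(\cb)=0$. The delicate point here is recognizing that a single nonzero degree $-1$ vector, combined with $\ttt_0$, already manufactures a torus in the conjugacy class of $\ttt_1$ in the sense of Demu\v{s}kin's classification, which is precisely what $\textsf{r}(\cb)=0$ rules out.

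With $\cb_{[-1]}=0$ in hand, the standard grading yields $\cb\subset\sum_{i\geq 0}W(n)_{[i]}=W(n)_0$. Since $\cb$ is maximal solvable in $W(n)$ and is contained in $W(n)_0$, it is a fortiori maximal solvable in $W(n)_0$; as it still contains $\ttt_0$, which is a maximal torus of $W(n)_0$ (indeed maximal already in $\ggg$ because $\Cent_\ggg(\ttt_0)=\ttt_0$), it is a Borel subalgebra of $W(n)_0$. Finally, Lemma \ref{The Third Key Lemma}(2) states that every Borel subalgebra of $W(n)_0$ is conjugate under $\Aut(W(n))$ to $\tsb_0=\bbb+W(n)_1$, whence $\cb\cong\tsb_0$, completing the proof.
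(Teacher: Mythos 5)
Your proof is correct and takes essentially the same route as the paper's: reduce by conjugacy to $\ttt_0\subset\cb$, invoke the grading lemma, show $\cb_{[-1]}=0$ by bracketing a putative $D=\sum_i c_i\partial_i$ with the toral elements $x_j\partial_j$ to produce $\partial_j$ and hence $(1+x_j)\partial_j$ in $\cb$, and finish with Lemma \ref{The Third Key Lemma}. Your only divergence is that you make explicit, via the torus spanned by $\{x_i\partial_i\mid i\neq j\}\cup\{(1+x_j)\partial_j\}$ and Demu\v{s}kin's classification (Theorem \ref{Demuskintori}), why $(1+x_j)\partial_j\in\cb$ contradicts $\textsf{r}(\cb)=0$ --- a step the paper asserts without elaboration --- which is a welcome clarification rather than a different argument.
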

\begin{proof} Up to conjugation, we may  assume $\cb\supset \ttt_0$. By the homogeneous condition, $\cb$ admits a standard-graded structure. So $\cb=\sum_i\cb_{[i]}$. We claim that $\dim\cb_{[-1]}=0$. Actually, if there is a non-zero $D\in \cb_{[-1]}$ which is expressed as $D=\sum_{i}c_{t_i}\partial_{t_i}$ with $c_{t_i}\in \bbk\backslash\{0\}$ for $t_i\in\{1,\cdots,n\}$. Those $\partial_{t_i}$ must fall in $\cb$ because $[x_{t_i}\partial_{t_i},D]=-c_{t_i}\partial_{t_i}\in \cb$. Hence $(1+x_{t_i})\partial_{t_i}\in\cb$, which contradicts the assumption  $\textsf{r}(\cb)=0$. Therefore, $\cb$ falls in $W(n)_0$. The remaining thing can be done, thanks to Lemma \ref{The Third Key Lemma}.
\end{proof}

\subsection{}
Let $\cb$ be a homogeneous Borel subalgebra of $W(n)$ with $\textsf{r}(\cb)=n$. This is to say, $\cb\supset \sigma(\ttt_n)$ for some $\sigma\in \Aut(W(n))$ and $\ttt_n=\bbk(1+x_1)\partial_1+\cdots+\bbk(1+x_n)\partial_n$.
\begin{lemma}\label{fullborel} $\cb$ is conjugate to $\tsb_n$.
\end{lemma}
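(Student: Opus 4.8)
The plan is to reduce by conjugation to the case $\cb\supseteq\ttt_n$ and then to match $\cb$ against $\tsb_n$ through its graded pieces. Since $\textsf{r}(\cb)=n$ gives $\sigma(\ttt_n)\subseteq\cb$ for some $\sigma\in\Aut(W(n))$, I would first replace $\cb$ by $\sigma^{-1}(\cb)$ and assume $\ttt_n\subseteq\cb$. By Corollary \ref{theSecondKeyLemma}(2), $\cb$ is then $\ttt_n$-graded; writing $y_i=1+x_i$, the weight of a monomial $\mathbf{y}^{\mathbf{c}}\partial_i$ under the toral basis $H_s=y_s\partial_s$ is $c_s-\delta_{is}$, so that $\bbk\partial_i$ is the unique weight space of weight $-\epsilon_i$ and $C_\cb(\ttt_n)=\ttt_n$ (Remark \ref{cartantroi}). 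In particular $\partial_i\in\cb$ if and only if the weight space $\cb_{-\epsilon_i}$ is nonzero, and, since $x_i\partial_i=y_i\partial_i-\partial_i$ with $y_i\partial_i\in\ttt_n$, this holds simultaneously for all $i$ exactly when $\ttt_0=\sum_i\bbk x_i\partial_i\subseteq\cb$.

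The crucial step is therefore to show that, after a further conjugation preserving $\ttt_n$, the Borel $\cb$ contains the full space $W(n)_{[-1]}=\sum_i\bbk\partial_i$ (equivalently, contains $\ttt_0$ as well as $\ttt_n$). The prototype is the rank-one computation used for $W(1)$: there the Borel $\bbk(1+x)\partial+\bbk(1+x)^a\partial$ contains $\ttt_1$ but not $\partial$, and the automorphism induced by $x\mapsto(1+x)^b-1$ with $ab\equiv1\pmod p$ fixes $\ttt_1$ up to scalars while carrying $(1+x)^a\partial$ to a multiple of $\partial$. For general $n$ I would run this one variable at a time: if $\partial_i\notin\cb$, then solvability of $\cb$ together with $y_i\partial_i\in\cb$ forces $\cb$ to contain a raising vector of weight $+\epsilon_i$ in the $i$-th direction, and I would apply the automorphism induced by $x_i\mapsto(1+x_i)^{b_i}-1$ to move it down onto $\partial_i$ while keeping $\ttt_n$ fixed. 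The main obstacle lies precisely here: the weight-$(+\epsilon_i)$ vectors in $\cb$ may involve cross terms in the other variables $y_j$, so the single-variable substitutions must be organized (inductively, or ordered by degree) so that straightening the $i$-th direction does not disturb directions already arranged. Controlling this simultaneous normalization is the technical heart of the argument.

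Once $\cb\supseteq W(n)_{[-1]}+\ttt_n$, and hence $\ttt_0\subseteq\cb$, the Lie algebra $\cb$ becomes standard-graded by Corollary \ref{theSecondKeyLemma}(1), with $\cb_{[-1]}=W(n)_{[-1]}$ full. The degree-zero part $\cb_{[0]}$ is then a solvable subalgebra of $W(n)_{[0]}\cong\gl(n,\bbk)$ containing the diagonal torus $\ttt_0$, so $\cb_{[0]}$ lies in some Borel of $\gl(n,\bbk)$ through $\ttt_0$; conjugating by an element of $N_{\GL(n,\bbk)}(\ttt_0)\subseteq G_0$, which fixes $\ttt_0$, preserves the standard grading, and preserves the full space $W(n)_{[-1]}$, I may assume $\cb_{[0]}\subseteq\bbb$ by Proposition \ref{GLBconjugation} and the conjugacy of Borels through a fixed torus.

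It remains to bound the higher graded pieces. For $s\geqslant1$ and a monomial $\mathbf{x}^{\mathbf{a}}\partial_q\in\cb_{[s]}$ I would exploit that all $\partial_i\in\cb$: applying $\ad\partial_i$ repeatedly lowers $\mathbf{a}$ within $\cb$ (each coefficient being a nonzero residue mod $p$), so if $a_q\geqslant2$ one reaches the triple $\partial_q,x_q\partial_q,x_q^2\partial_q$ spanning a copy of $\mathfrak{sl}_2$, contradicting solvability, whence $a_q\leqslant1$; and if $a_j\neq0$ for some $j>q$ one reaches $x_j\partial_q\in\cb_{[0]}\subseteq\bbb$, impossible since $x_j\partial_q\notin\bbb$ for $j>q$, whence $\mathbf{a}$ is supported on $\{1,\dots,q\}$. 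These are exactly the defining conditions of $\tsb_n$, so $\cb\subseteq\tsb_n$; as $\cb$ is a maximal solvable subalgebra and $\tsb_n$ is solvable, this forces $\cb=\tsb_n$, completing the proof.
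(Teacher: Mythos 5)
Your overall strategy is the same as the paper's (reduce to $\ttt_n\subseteq\cb$, use the $\ttt_n$-grading, enlarge $\cb^{(\ttt_n)}_{[-1]}$ by automorphisms of the form $x_q\mapsto(1+x_q)^{b}-1$, then finish in the case $\cb\supseteq\ttt_0+W(n)_{[-1]}$ by ruling out the non-solvable triples and invoking maximality; your concluding step is essentially the paper's base case, run by induction on $r'=\dim\cb^{(\ttt_n)}_{[-1]}$). But there are two genuine gaps, both located exactly where you wrote ``the technical heart.'' First, your claim that ``solvability of $\cb$ together with $y_i\partial_i\in\cb$ forces $\cb$ to contain a raising vector of weight $+\epsilon_i$'' is false as stated: solvability only constrains $\cb$, it produces nothing. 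It is entirely possible a priori that in the missing directions $\cb_2:=\cb\cap W(x_{r+1},\dots,x_n)$ contains \emph{no} nilpotent element at all, i.e.\ $\cb_2=\ttt_{n-r}(x_{r+1},\dots,x_n)$. The paper must exclude this by a separate, nontrivial maximality argument: it ``saturates'' the mixed part of $\cb$ by all powers $(1+x_{r+1})^{\gamma_{r+1}}\cdots(1+x_n)^{\gamma_n}$, checks the saturated space $\overline\cb$ is still a solvable subalgebra containing $\cb$ (so $\cb=\overline\cb$ by maximal solvability), and then observes that $\overline\cb+\bbk\partial_n$ is solvable and strictly larger --- a contradiction. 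Nothing in your proposal replaces this step, and without it the induction cannot start in general.

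Second, even granting a nilpotent element, it need not have weight $+\epsilon_q$: it has the form $X=(1+x_{r+1})^{a_{r+1}}\cdots(1+x_n)^{a_n}\partial_q$ with arbitrary cross exponents (and $a_q=0$ or $a_q\geqslant 2$ by nilpotency). You correctly identified that the single-variable substitution $x_q\mapsto(1+x_q)^{b_q}-1$ does not handle these cross terms, but you left the resolution open. The paper resolves it with a two-stage normalization: first the multi-variable automorphism $\Phi$ induced by $x_q\mapsto(1+x_q)\prod_{j\neq q}(1+x_j)^{d_j}-1$ (with $d_j=d_q(p-a_j)$, $d_q(a_q-1)\equiv 1 \bmod p$), which fixes $\ttt_{n-r}(x_{r+1},\dots,x_n)$ pointwise and clears all cross factors, carrying $X$ to $(1+x_q)^{a_q}\partial_q$; only then the one-variable $\Psi$ you describe, carrying this to $\partial_q$. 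Both maps fix the already-arranged $\partial_1,\dots,\partial_r$, so $\dim\Psi\Phi(\cb)^{(\ttt_n)}_{[-1]}=r'+1$ and the induction closes. Since you explicitly deferred precisely this normalization and misattributed the existence of the nilpotent vector to solvability rather than to the maximality argument above, the proposal as written does not constitute a proof; its skeleton, however, matches the paper's.
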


\begin{proof} We might as well assume $\cb\supset \ttt_n$ without loss of generality.
By the homogeneous condition,  $\cb$ is $\ttt_n$-graded.  We will prove the lemma by induction on $k':=n-\dim\cb^{(\ttt_n)}_{[-1]}$ (recall the notation in \S2.5). We additionally set $k=\dim\cb^{(\ttt_n)}_{[-1]}$.

When $k'=0$, i.e. $\dim \cb^{(\ttt_n)}_{[-1]}=n$, we have  $\cb\supset \ttt_0$. By the homogeneous condition, $\cb$ is standard graded, i.e. $\cb=\sum_{i}\cb_{[i]}$. Furthermore, 
$\cb_{[-1]}=W(n)_{[-1]}$ and $\cb_{[0]}\supset \ttt_0$. Set $\cb_0=\sum_{i\geq 0} \cb_{[i]}$. We claim that
 \begin{itemize}
\item[($\ast$)] Under the conjugation arising from some  permutation $\pi$ of $(12\cdots n)$ (see Theorem \ref{wilsonauto}(1)),
$$\cb_0\subset \sum_{i}\sum_{{\bf{a}}(i)}\bbk {\bf{x}}^{{\bf{a}}(i)}\partial_{\pi(i)}$$
where ${\bf{x}}=(x_{\pi(1)},\cdots,x_{\pi(n)})$, and ${\bf{a}}(i)=(a_1,\cdots,a_i,0,\cdots,0)\in \bbp^{n}$ subjected to the condition that $a_i=0$ or $a_i=1$.
\end{itemize}
Actually, if the claim ($\ast$) is not true, by the $\ttt_0$-graded property and by  suitable  $\ad \partial_s$-actions, $s\in \{1,\cdots,n\}$, we readily conclude that one of the following things happens:
  \begin{itemize}
  \item[($\dagger$)]
  there exist in $\cb$ a pair of elements $x_i\partial_j$ and $x_j\partial_i$ with $i\ne j$.
    \item[($\ddagger$)] there exists  $x_i^2\partial_i$ in $\cb$ for some $i$.
  \end{itemize}
The former means that there are a triple in $\cb$: $x_i\partial_j$, $x_j\partial_i$ and $x_i\partial_i-x_j\partial_j$; the latter means that there are a triple in $\cb$: $\partial_i$, $x_i\partial_i$ and $x_i^2\partial_i$.  Any  of both situations  contradicts the solvableness of $\cb$. Hence the claim ($\ast$) is true. Thus, by a suitable permutation $\tau$ of $(x_1,\cdots,x_n)$ which gives rise to an automorphism of $W(n)$,  $\cb$ is isomorphic to a solvable subalgebra of $\tsb_n$. Hence $\cb\cong \tsb_n$ by the maximal solvableness of Borel subalgebras. We complete the proof for $r=0$.

Now suppose $k'>0$, and the statement has held when less than $k'$.
Without loss of generality, we assume $\cb_{[-1]}^{(\ttt_n)}=\bbk\partial_1+\cdots+\bbk\partial_{k}$ (we may apply some $g\in \GL(n)$ to get this). Then $\cb\supset \ttt_{n-k}=\bbk x_1\partial_1
+\cdots+\bbk x_k\partial_k+\bbk(1+x_{k+1})\partial_{k+1}+\cdots+\bbk (1+x_n)\partial_n$. Set $\cb_1:=\cb\cap W(x_1,\cdots,x_k)$ and
$\cb_2:=\cb\cap W(x_{k+1},\cdots,x_n)$. Then

\begin{align*}
&\cb_1\supset \sum_{i=1}^k\bbk\partial_i+ \sum_{i=1}^k\bbk x_i\partial_i;\cr
&W(x_{k+1},\cdots,x_n)^{(\ttt_{n-k})}_0\supset \cb_2\supset \sum_{i=k+1}^n\bbk(1+x_i)\partial_i.
\end{align*}
The further arguments will be divided into two cases: whether or not $\cb_2$ contains nilpotent elements.

{\bf {The first case:}} there is no nilpotent element in $\cb_2$. Then $\cb_2=\bbk(1+x_{k+1})\partial_{k+1}+\cdots+\bbk(1+x_n)\partial_n$. We write
\begin{align}\label{Bfirstcase-1}
\cb=\cb_1+\sum_{(b_1,\cdots,b_k;c_{k+1},\cdots,c_n;i)\in I}\bbk x_1^{b_1}\cdots x_k^{b_k}(1+x_{k+1})^{c_{k+1}}\cdots(1+x_n)^{c_n}\partial_i+\cb_2;
\end{align}
for a subset $I$ of $\Gamma=\bbp^k\times \bbp^{n-k}\times \{1,\cdots,k\}$ where $\bbp:= \{0,1,\cdots,p-1\}$. Denote the above second sum formula in (\ref{Bfirstcase-1}) by $\cb_1'$, surely $\cb_1'\subset \cb$. 

 Let us introduce a new subset of $\overline I$ of  $\Gamma$, associated with $I$
 \begin{align*}
 \overline I:=\{(b_1,\cdots,b_k; \gamma_{k+1},\cdots,\gamma_n; i)\mid  &\gamma_i=0,1,\cdots,p-1; \forall i=k+1,\cdots,n \newline
 \cr
 &\mbox{if exists }(b_1,\cdots,b_k;c_{k+1},\cdots,c_n;i)\in I\}.
 \end{align*}
We simply write  elements of $\overline I$ as $(b_1,\cdots,b_k;\infty,\cdots,\infty;i)$, which just means the existence of some $(b_1,\cdots,b_k;-,\cdots,-;i)\in I$.
Set $$\overline{\cb'_1}=\sum_{(b_1,\cdots,b_k;\infty,\cdots,\infty;i)\in \overline I}\bbk x_1^{b_1}\cdots x_k^{b_k}(1+x_{k+1})^\infty\cdots (1+x_n)^\infty\partial_i.$$
And set
$$\overline \cb=\cb_1+\overline{\cb'_1}+\cb_2.$$
We claim  that
 \begin{align*}
 \overline\cb \mbox{ is a solvable subalgebra of }W(n) \mbox{ containing }\cb,
\end{align*}
thereby $\cb$ coincides with $\overline\cb$ by the maximality of $\cb$.
To prove the above claim, we first need to check that the linear subspace $\overline \cb$ is a subalgebra. For this, note that $\cb_1+\overline{\cb'_1}$ is normalized by $\cb_2$. We only need to check $\cb_1+\overline{\cb'_1}$ ($=\overline{\cb'_1}$) is a subalgebra.
 In order  to check this,  we take any two elements  $\overline X, \overline X'\in \overline{\cb'_1}$ with $\overline X=x_1^{b_1}\cdots x_k^{b_k}(1+x_{k+1})^\infty\cdots (1+x_n)^\infty\partial_i$, $\overline X'=x_1^{b'_1}\cdots x_k^{b'_k}(1+x_{k+1})^\infty\cdots (1+x_n)^\infty\partial_j$. Correspondingly we can find two elements  $X$ and $X'$ in $\cb_1'$ by the definition as below
\begin{align*}
X&=x_1^{b_1}\cdots x_k^{b_k}(1+x_{k+1})^{c_{k+1}}\cdots (1+x_n)^{c_n}\partial_i \cr
  X'&= x_1^{b'_1}\cdots x_k^{b'_k}(1+x_{k+1})^{c'_{k+1}}\cdots (1+x_n)^{c'_n}\partial_j
   \end{align*}
   for some $(c_{k+1},\cdots,c_n),(c'_{k+1},\cdots,c'_n)\in\bbp^{n-k}$. And then
\begin{align}\label{Lieproduct}
[\overline X,\overline X']=(1+x_{k+1})^\infty\cdots (1+x_n)^\infty[x_1^{b_1}\cdots x_k^{b_k}\partial_i,x_1^{b'_1}\cdots x_k^{b'_k}\partial_j]\in \overline\cb'_1
\end{align}
because $[X,X']\in\cb'_1$, and $\cb'_1\subset \cb$ is $\ttt_{n-k}$-graded. The solvableness of $\overline\cb$ follows from (\ref{Lieproduct}), along  with the normalization of $\overline\cb'_1$ to $\cb_1$, and the solvableness of $\cb$. Hence,  $\cb=\overline\cb$.
On the other hand, the structure of $\overline\cb$ implies that $\cb+\bbk\partial_n=\overline\cb+\bbk\partial_n$ is a solvable subalgebra of $W(n)$, which contradicts  the maximal solvableness of $\cb$ because $\cb+\bbk\partial_n\supsetneqq \cb$.
So the first  case could not happen.

 {\bf{The second case:}} there exist some nilpotent elements in $\cb_2$. In this case,  $\cb_2 \supsetneqq\ttt_{n-k}(x_{k+1},\cdots, x_n)=\bbk(1+x_{k+1})\partial_{k+1}+\cdots+\bbk(1+x_{n})\partial_n$. Note that $\cb$ is a maximal solvable subalgebra containing $\ttt_n$, and $\cb_2$ is $\ttt_{n-k}(x_{k+1},\cdots,x_n)$-graded.
 We may suppose that there exists a   nilpotent element of "monomial form"  as below
 \begin{align*}
 X=(1+x_{k+1})^{a_{k+1}}\cdots (1+x_n)^{a_n}\partial_q
 \end{align*}
 which satisfies $a_q\ne 1$ (called {\sl {the  "q"-condition}}).  At first,  by a direct computation, any derivations of the form
 $(1+x_q)(\prod_{i\ne q} (1+x_{i})^{a_{i}})\partial_q$ are non-nilpotent,  thereby a nilpotent element of "monomial form"  
 must satisfy the "q"-condition. 
  Next, we show that such a nilpotent element of "monomial form" really exists in $\cb_2$. By  $\ttt_{n-k}(x_{k+1},\cdots,x_n)$-grading, we can first take a nilpotent element $X'$ to be a linear combination  of some "monomial form" elements of a   $\ttt_{n-k}(x_{k+1},\cdots,x_n)$-grading $(d_{k+1},\cdots,d_n)$.  Under  a suitable conjugation similar to the forthcoming (\ref{autoauto}) (only involving $x_{k+1},\cdots,x_n$ and satisfying the forthcoming ($\Phi$-2)), we can get a "monomial" summand from the combination such that its $\ttt_{n-k}(x_{k+1},\cdots,x_n)$-grading is different from the other summands. Then  such a "monomial" summand  must be a nilpotent element in $\cb_2$. We take it as a desired $X$ (the above computation is trivial and tedious, we omit the details here).

    By the same way as in \S\ref{Automorphisms}, we  define an automorphism $\varphi$ of the truncated polynomial $A(n)$ via
\begin{align}\label{autoauto}
&\varphi: x_j\mapsto x_j \mbox{ for } j\in\{1,\cdots,n\}\backslash \{q\}, \cr
&\varphi: x_q\mapsto (1+x_q)\prod_{j\in\{k+1,\cdots,n\}, j\neq q}(1+x_j)^{d_j}-1
\end{align}
where $d_j=d_q(p-a_j)\in \{0,1\cdots,p-1\}$ with $d_q(a_q-1)\equiv 1\mod p$. Then $\varphi$ induces an automorphism $\overline\varphi$ of $W(n)$, denoted by $\Phi$.
Then $\Phi|_{\cb_1}=$ identity. We have
\begin{align*}
\Phi(\partial_q)&=\varphi\circ\partial_q\circ\varphi^{-1}\cr
&=\prod_{j\in\{k+1,\cdots,n\}, j\neq q}(1+x_j)^{p-d_j}\partial_q.
\end{align*}
And then we have
\begin{align*}
\Phi(X)
=&(\prod_{j\in\{k+1,\cdots,n\}, j\neq q}(1+x_j)^{a_j})\varphi((1+x_q)^{a_q})\Phi(\partial_q)\cr
=&(1+x_q)^{a_q}\partial_q.
\end{align*}
Furthermore, $\Phi((1+x_j)\partial_j=(1+x_j)\partial_j$ for $j\in \{ k+1,\cdots,n\}\backslash\{q\}$. And
$$\Phi((1+x_q)\partial_q)=\varphi(1+x_q)\Phi(\partial_q)
=(1+x_q)\partial_q.$$
Hence $\Phi$ satisfies  the following properties
\begin{itemize}
\item[($\Phi$-1)] $\Phi|_{\ttt_{n-k}(x_{k+1},\cdots, x_n)}=$ identity.
\item[($\Phi$-2)] Under $\Phi$, $\Phi(\cb)=\cb_1+\Phi(\cb_2)$ which contains $\ttt_{n}$, and intersects with $W(n)^{(n)}_{[-1]}$ at $\sum_{i=1}^k\bbk\partial_i$.
\item[($\Phi$-3)] It is a specially important consequence that $\Phi(\cb_2)$ admits a nilpotent element $\Phi(X)=(1+x_q)^{a_q}\partial_q$.
\end{itemize}
 Next we define an automorphism $\Psi$ of $W(n)$ via
 $$\psi(x_j)=\begin{cases} x_j, &\mbox{ if } j\neq q;\cr
(1+x_q)^{b_q}-1, &\mbox{ if } j=q;
\end{cases}$$
where $b_q\in\{1,\cdots,p-1\}$ with $(p-a_q+1)b_q\equiv 1\mod p$. By a straightforward  computations as in (\ref{compWitt}), we have
 $\Psi\circ\Phi(\cb)\supset \ttt_n$ with $\dim\Psi\circ\Phi(\cb)^{(\ttt_n)}_{[-1]}=k'+1$.
 The inductive hypothesis yields that  $\Psi\circ\Phi(\cb)$ is conjugate to $\cb_n$. We complete the proof.
\end{proof}

\subsection{} It is the position for us to investigate the general cases. 
\begin{lemma} \label{conjugate to Br} Let $\cb$ be any given homogeneous Borel subalgebra of $W(n)$.   Then $\cb$ is conjugate to $\tsb_r$, where $r=\textsf{r}(\cb)$.
\end{lemma}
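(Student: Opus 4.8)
The plan is to reduce the general case to the two already-settled extremes $\textsf{r}(\cb)=0$ and $\textsf{r}(\cb)=n$ by splitting the variables into an ``unshifted'' block and a ``shifted'' block dictated by $\ttt_r$. Up to conjugation by $\Aut(W(n))$ I may assume $\ttt_r\subset\cb$, so that by Corollary \ref{theSecondKeyLemma} the algebra $\cb$ is $\ttt_r$-graded and decomposes into $\ttt_r$-weight spaces. Write $U=\{1,\dots,n-r\}$ and $S=\{n-r+1,\dots,n\}$, so that $\ttt_r=\sum_{i\in U}\bbk x_i\partial_i+\sum_{i\in S}\bbk(1+x_i)\partial_i$. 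Since the only $\ttt_r$-homogeneous elements of degree $-1$ are the $\partial_i$, each spanning a distinct one-dimensional weight space, the degree $-1$ part of $\cb$ is $\sum_{i\in P}\bbk\partial_i$ for some $P\subset\{1,\dots,n\}$.

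First I would pin down $P$ using that $\textsf{r}(\cb)=r$ is \emph{maximal}. If $\partial_i\in\cb$ for some $i\in U$, then since $x_i\partial_i\in\ttt_r\subset\cb$ we also get $(1+x_i)\partial_i=\partial_i+x_i\partial_i\in\cb$; replacing $x_i\partial_i$ by $(1+x_i)\partial_i$ yields a maximal torus inside $\cb$ with $r+1$ shifted coordinates, whose intersection with $\ggg_0$ has dimension $n-r-1$. By Theorem \ref{Demuskintori}(1) this torus is conjugate to $\ttt_{r+1}$, whence $\textsf{r}(\cb)\ge r+1$, a contradiction. Therefore $P\subset S$, i.e. $\cb$ contains no $\partial_i$ with $i\in U$.

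Next I would straighten the two blocks separately, using automorphisms that fix the complementary variables and hence $\ttt_r$. Set $\cb_S=\cb\cap W(x_{n-r+1},\dots,x_n)$ and $\cb_U=\cb\cap W(x_1,\dots,x_{n-r})$. Extending $\cb_S$ to a Borel of $W(x_{n-r+1},\dots,x_n)\cong W(r)$ containing the fully shifted torus $\ttt_r(x_{n-r+1},\dots,x_n)$, this Borel has invariant $r$, so Lemma \ref{fullborel} (applied to $W(r)$) provides an automorphism of $W(r)$, extended by the identity on $x_1,\dots,x_{n-r}$, carrying $\cb_S$ into $\tsb_r(x_{n-r+1},\dots,x_n)$; in particular every $\partial_i$ with $i\in S$ now lies in $\cb$. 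Likewise $\cb_U$ is solvable in $W(x_1,\dots,x_{n-r})$, contains $\ttt_0(x_1,\dots,x_{n-r})$ and no $\partial_i$, so it is of type $\textsf{r}=0$; by the argument for the case $\textsf{r}(\cb)=0$, realized by a $\GL(n-r,\bbk)$-conjugation normalizing $\ttt_0(x_1,\dots,x_{n-r})$ (Lemma \ref{The Third Key Lemma}), I may carry $\cb_U$ into $\tsb_0(x_1,\dots,x_{n-r})$. As these two automorphisms act on disjoint sets of variables and both fix $\ttt_r$, they compose to a single $\sigma$ after which $\sigma(\cb)\cap W(x_1,\dots,x_{n-r})=\tsb_0(x_1,\dots,x_{n-r})$, $\sigma(\cb)\cap W(x_{n-r+1},\dots,x_n)=\tsb_r(x_{n-r+1},\dots,x_n)$, and $\sigma(\cb)$ is still $\ttt_r$-graded.

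Finally I would analyze the mixed terms and conclude. With both pure blocks pinned to the standard Borels and all $\partial_s$ $(s\in S)$ available, every $\ttt_r$-homogeneous monomial ${\bf u}^{{\bf a}}{\bf w}^{{\bf b}}\partial_i$ of $\sigma(\cb)$ must satisfy the defining conditions of $Q_r$ in (\ref{Qq}); otherwise, exactly as in the maximality argument of \S\ref{standardborels}, applying suitable $\ad\partial_s$ $(s\in S)$ lowers the shifted degree and produces either a forbidden $\partial_i$ with $i\in U$, or a lower-triangular $x_j\partial_i$ with $i<j$ in $U$ — the latter together with the present $x_i\partial_j\in\tsb_0(x_1,\dots,x_{n-r})$ and $x_i\partial_i-x_j\partial_j\in\ttt_r$ spanning a non-solvable $\mathfrak{sl}_2$-triple — both contradicting the data already fixed. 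Hence $\sigma(\cb)\subseteq\tsb_r$, and since $\cb$ (so also $\sigma(\cb)$) is maximal solvable while $\tsb_r$ is solvable, $\sigma(\cb)=\tsb_r$. The main obstacle is precisely this last combinatorial bookkeeping: isolating each offending monomial out of a $\ttt_r$-weight vector, organizing the violations into the finitely many forbidden patterns, and checking each yields a contradiction, all while confirming that the block-straightening automorphisms of the previous step genuinely preserve the complementary block and the torus $\ttt_r$.
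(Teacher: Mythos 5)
Your outline follows the paper's strategy in broad strokes (pass to a $\ttt_r$-graded $\cb$, exclude $\partial_i$ for $i\in U$ via the maximality of $\textsf{r}(\cb)$, straighten the two variable blocks, control the mixed terms, finish by maximal solvability), and your first step is correct: the argument that $P\subset S$, using $(1+x_i)\partial_i=\partial_i+x_i\partial_i$ and Theorem \ref{Demuskintori}(1) to produce a torus conjugate to $\ttt_{r+1}$, is a valid and even more explicit version of the paper's item (r-0). The genuine gap occurs at the straightening of the shifted block, and it is two-fold. First, Lemma \ref{fullborel} supplies \emph{some} automorphism of $W(x_{n-r+1},\dots,x_n)$ carrying a Borel containing $\cb_S$ to $\tsb_r(x_{n-r+1},\dots,x_n)$, but you assert without justification that it ``fixes $\ttt_r$''; if it does not normalize $\ttt_r(x_{n-r+1},\dots,x_n)$, then after applying it you no longer know $\ttt_r\subset\sigma(\cb)$, and the $\ttt_r$-grading on which all your later steps rest is lost. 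Second, and fatally, from $\sigma(\cb_S)\subseteq\tsb_r(x_{n-r+1},\dots,x_n)$ you conclude ``in particular every $\partial_i$ with $i\in S$ now lies in $\cb$.'' This is a non sequitur: containment of $\cb\cap W(x_{n-r+1},\dots,x_n)$ in the standard block Borel does not place the elements $\partial_s$ \emph{inside} $\cb$. In fact it cannot be forced: the endpoint of the paper's own proof is $\cb=\Theta(\tsb_r)$ for the twist $\Theta$ induced by $x_{n-r+i}\mapsto(1+x_{n-r+i})^{p-1}-1$, and a direct computation ($\Theta(\partial_s)=-(1+x_s)^2\partial_s\notin\tsb_r$) shows $\Theta(\tsb_r)$ contains no $\partial_s$ whatsoever, even though it contains $\ttt_r$ and has $\textsf{r}=r$. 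Since your entire concluding step — stripping the $(1+x_j)$-powers from an offending mixed monomial by iterated $\ad\partial_s$, $s\in S$ — depends on having all these $\partial_s$ available, the proof collapses here; and proving that some conjugate of $\cb$ containing $\ttt_r$ also contains all $\partial_s$ is essentially the lemma itself, so the step cannot be patched cheaply.

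The paper is engineered around exactly this obstruction. It never pins $\cb_2=\cb\cap W(x_{n-r+1},\dots,x_n)$ to the standard block Borel, only to the torus-normalizing twist $\Theta(\tsb_r(x_{n-r+1},\dots,x_n))$ in item (r-3), and it excludes the mixed terms by two mechanisms that require no $\partial_s$: an element $Y=\mathbf{y}^{\mathbf{b}(q)}\partial_q$ of the form (\ref{formY}) is ruled out by the automorphism $\Omega$ (from $x_q\mapsto x_q\prod_j(1+x_j)^{b_j}$), which turns $Y$ into $\partial_q$ and exhibits a torus of type $\ttt_{r+1}$ inside $\Omega(\cb)$, contradicting $\textsf{r}(\cb)=r$; and a pair $Z(m,q)$, $Z(q,m)$ of the form (\ref{formZ}) generates, via the computation (\ref{PiSL2}), a non-solvable $\Pi$-twisted $\mathfrak{sl}_2$-type subalgebra — note that the paper is careful here to use that $x_q\partial_m$ actually lies in $\cb$ by the normalization (r-1), whereas your version invokes ``the present $x_i\partial_j\in\tsb_0(x_1,\dots,x_{n-r})$,'' which only gives membership in $\tsb_0(x_1,\dots,x_{n-r})$, not in $\cb$, another instance of the same confusion between $\cb$ and its ambient standard Borel. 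These exclusions yield $\cb\subseteq\cb_1+\cb_2+Q\subseteq\Theta(\tsb_r)$, maximality forces $\cb=\Theta(\tsb_r)$, and conjugacy to $\tsb_r$ follows only after the further twist $\Theta^{-1}$. To repair your proposal you would have to either establish that every Borel of $W(r)$ containing the full torus can be moved onto $\tsb_r(W(r))$ by an automorphism normalizing that torus together with an independent proof that $\partial_s\in\cb$ for all $s\in S$ (neither is available), or adopt the paper's $\Theta$-twisted target and replace the $\ad\partial_s$-stripping by the $\Omega$-automorphism and non-solvability arguments.
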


\begin{proof} If $r=n$, then the statement holds, thanks to Lemma \ref{fullborel}. In the following argument, we assume $r<n$.
Up to conjugation, we might as well assume $\cb$ contains $\ttt_r=\bbk x_1\partial_1+\bbk x_2\partial_2+\cdots +\bbk x_{n-r}\partial_{n-r}+\bbk(1+x_{n-r+1})\partial_{n-r+1}+\cdots+\bbk(1+x_n)\partial_n$. Then $\cb$ is $\ttt_r$-graded, owing to the homogeneous conditon.
  According to the assumption, we have the following observation:
\begin{itemize}
\item[(r-0)] $\cb^{(\ttt_r)}_{[-1]}\subset \bbk\partial_{n-r+1}+\cdots+\bbk\partial_n$.
\end{itemize}
Note that $\cb^{(\ttt_r)}_{[0]}\cap W(x_1,\cdots,x_{n-r})$ is a solvable subalgebra of $W(x_1,\cdots,x_{n-r})_{[0]}$ containing the maximal torus $\ttt_0(x_1,\cdots,x_{n-r})$. And $G_0=\GL(n,\bbk)\supset \GL(n-r,\bbk)\times \GL(r)$. By a result on Borel subalgebra conjugacy classes of the classical Lie algebras (cf. \cite[\S11.3-4]{Hum-1}), we can further  make the following assumption without loss of  generality.
\begin{itemize}
\item[(r-1)] $\cb^{(\ttt_r)}_{[0]}\cap W(x_1,\cdots,x_{n-r})\subset \sum_{i\leqq j;\; i,j=1,\cdots,n-r}\bbk x_i\partial_j$.
\end{itemize}
Set $\cb_1=\cb\cap W(x_1,\cdots,x_{n-r})$ which is solvable . Then $\ttt_0(x_1,\cdots,x_{n-r})\subset \cb_1\subset W(n-r)_0$. According to Lemma \ref{The Third Key Lemma}, we can additionally make the following  assumption without loss of generality:
\begin{itemize}
\item[(r-2)] $\cb_1\subset\mbox{ the Borel subalgebra }\tsb_0(x_1,\cdots,x_{n-r})$.
\end{itemize}
Set $\cb_2=\cb\cap W(x_{n-r+1},\cdots,x_n)$. Then $\cb_2$ is solvable and contains the maximal torus $$\ttt_{r}(x_{n-r+1},\cdots,x_n)$$ of $W(x_{n-r+1},\cdots,x_n)$. Observe that
$$\Aut(W(x_1,\cdots,x_{n-r}))\times \Aut(W(x_{n-r+1},\cdots,x_n))\subset \Aut(W(n)).$$
So
we may make the following assumption  additionally, taking Lemma \ref{fullborel} into account:
\begin{itemize}
\item[(r-3)] $\cb_2\subset \Theta(\tsb_{r}(x_{n-r+1},\cdots,x_n))$ for a certain  $\Theta\in\Aut(W(x_{n-r+1},\cdots,x_n))$.
\end{itemize}
 Set ${\bf{x}}=(x_1,\cdots,x_{n-r})$ and ${\bf{y}}=(y_{n-r+1},\cdots,y_n)$ for $y_i=1+x_i$.
Consider a subspace $Q$ as below (keeping the notations in  Conventions \ref{conventions}):
$$Q=\sum_{i=1}^{n-r}\sum_{{\bf{a}}(i),{\bf{b}}(i)}\bbk{\bf{x}}^{{\bf{a}}(i)}{\bf{y}}^{{\bf{b}}(i)} \partial_i +\sum_{j=n-r+1}^n\sum_{{\bf{a}}(j);{\bf{b}}(j)}\bbk{\bf{x}}^{{\bf{a}}(j)}{\bf{y}}^{{\bf{b}}(j)} \partial_j,
$$
where ${\bf{a}}(i):=(a_1,\cdots,a_{n-r})\in \bbp^{n-r}$ is subjected to the condition that either $|{\bf{a}}(i)|>1$ or $|{\bf{a}}(i)|=1=a_1+\cdots+a_i$, while ${\bf{a}}(j)\in \bbp^{n-r}$ is subjected to the condition $|{\bf{a}}(j)|>0$, and ${\bf{b}}(-):=(b_{n-r+1},\cdots,b_n)$ runs through $\bbp^{r}$ for $(-)=(i), (j)$.
It is not hard to see that $Q$ is stabilized under the action of $\Aut(W(x_{n-r+1},\cdots,x_n))$, and that the following  statements hold
\begin{itemize}
\item[(r-4)] $Q$ is a subalgebra normalizing $\tsb_0(x_1,\cdots,x_{n-r})$. And $Q+\tsb_0(x_1,\cdots,x_{n-r})$ normalizes $\Theta(\tsb_r(x_{n-r+1},$ $\cdots,x_n))$.
\item[(r-5)]  $\Theta (\tsb_r)=\tsb_0(x_1,\cdots,x_{n-r})+ Q+\Theta(\tsb_r(x_{n-r+1},\cdots,x_n))$ (note that $\Theta$ can be naturally regarded as an automorphism of $W(n)$).
\end{itemize}
Set $\cb_Q:=\cb_1+Q+\cb_2$.  Then $\cb_Q\subset \Theta(\tsb_r)$.  From (r-2), (r-3) and (r-4), we know that $\cb_Q$ is a solvable subalgebra. We will finally show the following inclusion relation
\begin{itemize}
\item[(r-6)]
$\cb_Q\supset \cb$.
\end{itemize}
In order to prove (r-6), we first claim that  $\cb$ may contain, up to conjugation,
 neither elements of the forthcoming form (\ref{formY}), nor elements of the forthcoming form (\ref{formZ}), presented as
\begin{align} \label{formY}
 Y={\bf{y}}^{{\bf{b}}(q)}\partial_q\mbox{ with }q\in\{1,2,\cdots,n-r\};
 \end{align}
and
\begin{align}\label{formZ}
Z=x_m{\bf{y}}^{{\bf{b}}(q)}\partial_q\mbox{ with } m,q\in\{1,2,\cdots,n-r\}, m>q.
\end{align}
Suppose there exists in $\cb$ such an element $Y$  of form (\ref{formY}). Clearly, $|{\bf{b}}(q)|>0$ by (r-0).
For $Y=(1+x_{n-r+1})^{b_{n-r+1}}\cdots (1+x_n)^{b_n}\partial_q$.  According to the argument of \S\ref{Automorphisms}, one can consider an automorphism $\Omega$ of $W(n)$ induced by $\omega\in\Aut(A(n))$, which is defined via
\begin{align*}
&\omega: x_i\mapsto x_i \mbox{ for } i\in\{1,\cdots,n\}\backslash \{q\}, \cr
&\omega: x_q\mapsto x_q\prod_{j\in\{n-r+1,\cdots,n\}}(1+x_j)^{b_j}
\end{align*}
 Then $\Omega(Y)=\partial_q$, while $\Omega(x_q\partial_q)=x_q\partial_q$, and we have $\Omega(\cb)\supset \sum_{i=1, i\neq q}^{n-r}\bbk x_i\partial_i+\bbk(1+x_q)\partial_q+\sum_{i=n-r+1}^n \bbk   (1+x_i)\partial_i\cong \ttt_{r+1}$, which contradicts the assumption $\textsf{r}(\cb)=r$.
  Thus we have proved that $\cb$ does not contain any elements of the form (\ref{formY}).

For (\ref{formZ}), we first conclude that
\begin{align}\label{STATE}
\cb &\mbox{ does not contain any pair of elements of the forms:}\cr
&Z(m,q)=x_m(1+x_{n-r+1})^{b_{n-r+1}}\cdots (1+x_n)^{b_n}\partial_q,\cr
&Z(q,m)=x_q (1+x_{n-r+1})^{c_{n-r+1}}\cdots (1+x_n)^{c_n}\partial_m,
\end{align}
with $m,q\in\{1,2,\cdots,n-r\}, m>q$. Otherwise, suppose that $\cb$ contains a pair of elements as above. Then one can easily find a non-solvable subalgebra in $\cb$ arising  from such pairs, which contradicts the solvableness of $\cb$.
 Combining the statement (\ref{STATE}) with the assumption (r-1), we may conclude that the maximal solvable  subalgebra $\cb$ does not contain any elements of the form (\ref{formZ})
 (if necessary, we change the order of the indeterminants $x_i$ for $i=1,\cdots,n-r$, which gives rise to some automorphism of $W(n)$).

  In order to  complete the proof of (r-6),  we only need to show that there is no  element in $\cb$, which is a combination of some elements of the form (\ref{formY}) or (\ref{formZ}). Notice that all summands in such a combination admit different $\ttt_r$-gradings (different elements of the form   (\ref{formY}) or (\ref{formZ}) admit different $\ttt_r$-gradings).  On the other hand,  $\cb$ is $\ttt_r$-graded (the homogeneous condition).  If  $\cb$ contains such a combination, then $\cb$ contains its all summands. This contradicts what we just concluded. So $\cb$ does not contain  such a combination. We have completed the proof of (r-6).

It follows from (r-2)-(r-6) that  $\cb\subset \cb_Q\subset\Theta(\tsb_r)$. The maximality of $\cb$ implies  $\cb=\Theta(\tsb_r)$. We complete the proof.
\end{proof}

We have a direct consequence
\begin{corollary} \label{Invar} The following statements hold.
\begin{itemize}
\item[(1)] Any homogeneous Borel subalgebra of $W(n)$ contains a maximal torus conjugate to $\ttt_0$.

(Therefore, for given  homogeneous Borel subalgebra $\cb$ of $W(n)$, we can take its conjugacy $\cb^0$ such that  $\cb^0\supset\ttt_0$.)

\item[(2)] If set $\textsf{d}(\cb)=\dim \cb^0_{[-1]}$, then
$\textsf{d}(\cb)=\textsf{r}(\cb)$.
\end{itemize}
\end{corollary}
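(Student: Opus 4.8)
\emph{Plan for (1).} I would deduce this immediately from the classification Lemma~\ref{conjugate to Br}. Given an arbitrary Borel subalgebra $\cb$, put $r=\textsf{r}(\cb)$; then there is $\sigma\in G$ with $\cb=\sigma(\tsb_r)$. By Lemma~\ref{subalgebras}(2) the standard Borel subalgebra $\tsb_r$ contains $\ttt_0$, so $\cb=\sigma(\tsb_r)\supset\sigma(\ttt_0)$. Since $\ttt_0$ is a maximal torus of $W(n)$ and $\sigma$ is an automorphism, $\sigma(\ttt_0)$ is a maximal torus lying in $G\cdot\ttt_0$; this proves (1).

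\emph{Plan for (2): well-definedness.} The first task is to check that $\dim\cb^0_{[-1]}$ does not depend on the choice of $\cb^0\in G\cdot\cb$ with $\cb^0\supset\ttt_0$. Any such $\cb^0$ is $\ttt_0$-graded by Corollary~\ref{theSecondKeyLemma}(1), so $\cb^0=\bigoplus_i\cb^0_{[i]}$ with $\cb^0_{[i]}=\cb^0\cap W(n)_{[i]}$; in particular $\cb^0_{[-1]}=\cb^0\cap W(n)_{[-1]}$ and $\cb^0\cap\ggg_0=\bigoplus_{i\geqslant 0}\cb^0_{[i]}$. As $\ggg=W(n)_{[-1]}\oplus\ggg_0$, this gives $\dim\cb^0_{[-1]}=\dim\cb^0-\dim(\cb^0\cap\ggg_0)$. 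Now the filtration $\{\ggg_i\}$ is $\Aut(W(n))$-invariant by Theorem~\ref{wilsonauto}, so if $\cb^0=g\cb$ then $\cb^0\cap\ggg_0=g(\cb\cap\ggg_0)$; hence both $\dim\cb^0$ and $\dim(\cb^0\cap\ggg_0)$ are invariants of the $G$-orbit of $\cb$. Therefore $\textsf{d}(\cb)$ is well-defined.

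\emph{Plan for (2): value.} To compute the common value it suffices to evaluate $\dim\cb^0_{[-1]}$ on the representative $\cb^0=\tsb_r$, which is admissible since $\tsb_r\in G\cdot\cb$ and $\tsb_r\supset\ttt_0$. Inspecting the defining decomposition $\tsb_r=\tsb_0(x_1,\dots,x_{n-r})+Q_r+\tsb_r(x_{n-r+1},\dots,x_n)$, the nought-varied summand and the block $Q_r$ consist of terms of non-negative grading degree, so contribute nothing in degree $-1$, while the full-varied summand contributes exactly $\bbk\partial_{n-r+1}+\cdots+\bbk\partial_n$. Hence $\dim(\tsb_r)_{[-1]}=r$, and combined with the well-definedness this yields $\textsf{d}(\cb)=r=\textsf{r}(\cb)$.

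\emph{Main obstacle.} The only point requiring care is the well-definedness in the second step: a priori two $\ttt_0$-containing conjugates of $\cb$ could interact with the grading differently, and it is precisely the $\Aut(W(n))$-invariance of the filtration, together with the splitting of a $\ttt_0$-graded subalgebra into its degree $-1$ part and its intersection with $\ggg_0$, that forces $\dim\cb^0_{[-1]}$ to be a conjugacy invariant. After that, reading off $\dim(\tsb_r)_{[-1]}$ is routine bookkeeping of the generators of $\tsb_r$.
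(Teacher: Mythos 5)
Your proposal is correct and takes essentially the same route the paper intends: Corollary \ref{Invar} is stated there as a direct consequence of Lemma \ref{conjugate to Br} together with the fact that $\tsb_r\supset\ttt_0$ (Lemma \ref{subalgebras}(2)), and the value $\dim(\tsb_r)_{[-1]}=r$ is read off from the standard representative, exactly as you do. Your explicit verification that $\textsf{d}(\cb)$ is well-defined—using Corollary \ref{theSecondKeyLemma}(1) to split $\cb^0=\cb^0_{[-1]}\oplus(\cb^0\cap\ggg_0)$ and the $\Aut(W(n))$-invariance of the filtration from Theorem \ref{wilsonauto}—is a detail the paper leaves implicit, and you carry it out correctly.
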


So, there are two equal invariants of Borel subalgebras: $\textsf{d}(\cb)$ and $\textsf{r}(\cb)$ under $\Aut(W(n))$.

\subsection{}
By Lemma \ref{conjugate to Br}, Corollary \ref{Invar} and \cite[Proposition 3.3]{BFS} (note that in \cite{BFS}, the order of the parameters in the subscripts of the notations $\{\ttt_r\}$ is inverted to the one in the present paper),
    we have
\begin{theorem} \label{maintheorem} Assume that  $\ggg=W(n)$, $G=\Aut(W(n))$, and that the characteristic $p$ of the ground field $\bbk$ is bigger than $3$. Then the following statements hold.
\begin{itemize}
\item[(1)] All standard Borel subalgebras $\tsb_i$, $i=0,1,\cdots,n$ are homogeneous.
\item[(2)] There are $(n+1)$ conjugacy classes of homogeneous Borel subalgebras of $W(n)$ under $G$.
The standard Borel subalgebras $\tsb_i$, $i=0,1,\cdots, n$ are representatives of $(n+1)$ conjugacy classes.
\item[(3)] There are only one conjugacy class of generic homogeneous Borel subalgebras, which is conjugate to $\tsb_n$.
\end{itemize}
\end{theorem}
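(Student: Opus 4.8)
The plan is to deduce both assertions from the structural results already in hand, treating the theorem as a synthesis rather than a fresh computation. The substantive work — that an arbitrary Borel subalgebra is conjugate to one of the standard ones — has been carried out in Lemma \ref{conjugate to Br}, so the remaining task is to organize the counting and to identify precisely which class is the generic one.

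For part (1), I would first recall that any Borel subalgebra $\cb$ of $W(n)$ contains a maximal torus, and that by Theorem \ref{Demuskintori} this torus is $G$-conjugate to exactly one of $\ttt_0,\dots,\ttt_n$; this makes the invariant $\textsf{r}(\cb)$ well defined with values in $\{0,1,\dots,n\}$. Lemma \ref{conjugate to Br} then gives $\cb\cong \tsb_{\textsf{r}(\cb)}$, so every Borel falls into one of the $n+1$ classes represented by $\tsb_0,\dots,\tsb_n$. To see these classes are genuinely distinct, I would invoke Corollary \ref{Invar}: the quantity $\textsf{r}$ (equivalently $\textsf{d}$) is a $G$-invariant, while $\textsf{r}(\tsb_i)=i$ by Lemma \ref{rBequalto0}; hence $\tsb_i$ and $\tsb_j$ are non-conjugate whenever $i\neq j$. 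This yields precisely $n+1$ conjugacy classes.

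For part (2), the key is to locate the generic class. By Lemma \ref{genericlemma} a generic Borel is one containing a generic Cartan subalgebra $\hhh_\gen=C_\ggg(\ttt_\gen)$, and by Remark \ref{cartantroi} every such centralizer $\Cent_\ggg(\ttt_r)$ equals the torus itself, so $\hhh_\gen=\ttt_\gen$. Using \cite[Proposition 3.3]{BFS}, and keeping in mind the inverted indexing convention flagged in the statement, the generic torus of $W(n)$ is $G$-conjugate to the full-varied torus $\ttt_n$. Consequently a generic Borel contains a conjugate of $\ttt_n$, which forces $\textsf{r}(\cb)=n$ since $n$ is the maximal admissible value, and by part (1) such a Borel is conjugate to $\tsb_n$. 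Conversely $\tsb_n\supset \ttt_n$ by Lemma \ref{subalgebras}(2), and $\ttt_n$ is generic, so $\tsb_n$ is itself generic; thus the generic Borels constitute the single class of $\tsb_n$.

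I expect the only genuinely delicate point to be the index-matching in part (2): reconciling the parametrization of maximal tori in \cite[Proposition 3.3]{BFS} with the one used here (where the subscripts run in the opposite order) so as to confirm that the generic torus is $\ttt_n$ rather than $\ttt_0$, together with the passage from $\ttt_\gen$ to $\hhh_\gen$ via Remark \ref{cartantroi}. Everything else is bookkeeping, since both the conjugacy statement and the $G$-invariance of $\textsf{r}$ have already been secured in Lemma \ref{conjugate to Br} and Corollary \ref{Invar}.
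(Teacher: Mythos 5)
Your proposal is correct and takes essentially the same route as the paper: the paper derives Theorem \ref{maintheorem} in one line by combining Lemma \ref{conjugate to Br}, Corollary \ref{Invar}, \cite[Proposition 3.3]{BFS} (with exactly the inverted torus-indexing caveat you flag) and Remark \ref{cartantroi}. Your write-up simply makes explicit the bookkeeping — the well-definedness of $\textsf{r}(\cb)$, the non-conjugacy of the $\tsb_i$ via the invariant $\textsf{r}$, and the identification of the generic class with that of $\tsb_n$ through $\hhh_\gen=\ttt_\gen$ — that the paper leaves implicit in its citation.
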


\begin{remark}
In view of  \cite[Theorem D]{HS},  it is reasonable to expect
that all maximal solvable subalgebras of $W(n)$ are Borel subalgebras  if and only if $p>n$. When $p\leqslant n$, the  situation of conjugacy of maximal solvable subalgebras would become very complicated.
\end{remark}

\section{Solvable subgroups associated with standard Borel subalgebras}

Let $(\ggg,[p])$ be a restricted Lie algebra, and $G$ the adjoint group of $\ggg$. 
We are  specially interested in  $\Stab_G(\cb_\gen)$ and $G\slash\Stab_G(\cb_\gen)$ for a generic Borel subalgebra $\cb_\gen$ when $\ggg$ admits generic tori. In this section, we investigate the subgroups $\Stab_G(\cb_r)$  associated with standard Borel algebras in the case $\ggg=W(n)$. Keep the notations  in Theorem \ref{wilsonauto}.
\subsection{} For the standard homogeneous Borel subalgebras $\tsb_r$ of $W(n)$, denote $S_r=\Stab_G(\tsb_r)$, $r=0,1,\cdots,n$. Keeping it in mind that  $G_0$ is isomorphic to $\GL(n,\bbk)$ (cf. Theorem \ref{wilsonauto}), we have a standard Borel subgroup $B_0$ of $G_0$ which corresponds to the one consisting of  invertible upper-triangular  matrices of $\GL(n,\bbk)$.
Then we have the following general description of $S_r$.
\begin{prop} Let $B_0$ be the standard Borel subgroup of $G_0$ corresponding to the one consisting of invertible upper-triangular matrices of $\GL(n,\bbk)$. The following statements hold.
\begin{itemize}
\item[(1)] The solvable subgroup  $S_0$ coincides with the Borel subgroup $B_0\ltimes U$ of $G$.
\item[(2)] For $r=0, 1,\cdots,n$, $S_r=B_0\ltimes U_r$ is a connected subgroup of the Borel subgroup $B_0\ltimes U$, where $U_r=U\cap\Stab_G(\tsb_r)$.
\end{itemize}
\end{prop}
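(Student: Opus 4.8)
The plan is to treat both parts through the $\ttt_0$-grading of $\tsb_r$, reducing each stabilizer to its degree-zero piece together with the unipotent radical. Since every $\tsb_r$ contains $\ttt_0$, Corollary~\ref{theSecondKeyLemma} gives $\tsb_r=\bigoplus_i(\tsb_r)_{[i]}$ with $(\tsb_r)_{[i]}=\tsb_r\cap\ggg_{[i]}$, and a direct inspection of the three summands defining $\tsb_r$ shows $(\tsb_r)_{[0]}=\bbb$: the first and last summands contribute the upper-triangular generators $x_i\partial_j$ inside the two index blocks, while the degree-zero part of $Q_r$ supplies exactly the cross terms $x_i\partial_j$ with $i\le n-r<j$. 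I would also record that $\Lie(G)=\ggg_0$ (from Theorem~\ref{wilsonauto}, since $\Lie(G_0)=\ggg_{[0]}$ and $\Lie(U)=\ggg_1$), so that $\Lie(S_r)=\ggg_0\cap N_\ggg(\tsb_r)=\ggg_0\cap\tsb_r=\bbb\oplus(\tsb_r\cap\ggg_1)$, where the self-normalizing property of a Borel subalgebra gives $N_\ggg(\tsb_r)=\tsb_r$; note the degree $-1$ part of $\tsb_r$ simply does not meet $\Lie(G)$.

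For (1) I would prove the two inclusions yielding $S_0=B_0\ltimes U$. Any $u\in U$ stabilizes $\tsb_0=\bbb\oplus\ggg_1$, since $(u-\id)\ggg_i\subseteq\ggg_{i+1}$ keeps both $\bbb$ and $\ggg_1$ inside $\tsb_0$; and $B_0$ stabilizes $\tsb_0$ because it preserves the grading and $\Stab_{G_0}(\bbb)=B_0$ (the normalizer of a Borel subalgebra of $\gl(n,\bbk)$ is the corresponding Borel subgroup). Conversely, writing $g=g_0u$ with $g_0\in G_0$, $u\in U$, the inclusion $U\subseteq S_0$ forces $g_0=gu^{-1}\in S_0\cap G_0=\Stab_{G_0}(\bbb)=B_0$. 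Finally $B_0\ltimes U$ is a Borel subgroup of $G=\GL(n,\bbk)\ltimes U$ because $U$ is the unipotent radical and $B_0$ is a Borel subgroup of the Levi factor.

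For (2) I would first establish the semidirect decomposition $S_r=B_0\ltimes U_r$. The inclusion $B_0\subseteq S_r$ holds because $\ttt_0\subseteq\tsb_r$ and each strictly-upper generator $x_i\partial_j\in\bbb\subseteq\tsb_r$ ($i<j$) produces, through the subalgebra property of $\tsb_r$, a one-parameter subgroup of $G_0$ preserving $\tsb_r$; these together with the diagonal torus generate $B_0$. For the reverse, given $g=g_0u\in S_r$, the $G$-invariance of the filtration lets me pass to $\Gr\ggg$: the element $u$ acts as the identity there, so the induced automorphism of $\Gr\ggg$ is $g_0$, which must stabilize $\Gr(\tsb_r)=\tsb_r$. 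Hence $g_0\in S_r\cap G_0$, and this equals $B_0$ since a grading-preserving stabilizer of $\tsb_r$ stabilizes $(\tsb_r)_{[0]}=\bbb$. Then $u=g_0^{-1}g\in U\cap S_r=U_r$; as $U\trianglelefteq G$ and $B_0\cap U=\{1\}$, this gives $S_r=B_0\ltimes U_r\subseteq B_0\ltimes U$.

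The main obstacle is connectedness of $S_r$, equivalently of $U_r$ (since $B_0$ is connected). I would use the characteristic filtration $U=U^{[1]}\supseteq U^{[2]}\supseteq\cdots$, $U^{[m]}=\{g\in G:(g-\id)\ggg_i\subseteq\ggg_{i+m}\}$, whose successive quotients $U^{[m]}/U^{[m+1]}$ are vector groups identified with $\ggg_{[m]}$ compatibly with the $T_0$-action (from the explicit description of $U$ in \S\ref{Automorphisms}). Intersecting with $U_r$ filters $U_r$, with $m$-th graded piece $Q_m$ a closed subgroup of $\ggg_{[m]}$. The crucial point is that the grading cocharacter $\lambda\colon\mathbb{G}_m\to T_0\subseteq B_0\subseteq S_r$, arising from the Euler operator $\sum x_i\partial_i\in\ttt_0$, normalizes $U_r$ and acts on $\ggg_{[m]}$ as multiplication by $s^m$; since $\bbk$ is algebraically closed, $\{s^m:s\in\bbk^\times\}=\bbk^\times$, so $Q_m$ is stable under all scalar multiplications and is therefore a linear subspace, in particular connected. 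A downward induction on $m$ (with $U^{[m]}=\{1\}$ for $m\gg0$), using that an extension of connected groups is connected, then shows each $U_r\cap U^{[m]}$, and finally $U_r$, is connected. The delicate part to get right is exactly this reduction: one must verify that the filtration and induced symbol maps restrict correctly to $U_r$, and that the $\lambda$-scaling genuinely excludes the Frobenius-type nonlinear connected subgroups that can otherwise appear in characteristic $p$—this is precisely what the full-scaling action $\{s^m\}=\bbk^\times$ accomplishes.
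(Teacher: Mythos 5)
Your part (1) and the semidirect reduction $S_r=B_0\ltimes U_r$ (via the associated graded algebra, using $(\tsb_r)_{[0]}=\bbb$ and $\Stab_{G_0}(\bbb)=B_0$) are correct and flesh out exactly what the paper dismisses as ``immediate'' and ``easily verified''. But one step in your inclusion $B_0\subseteq S_r$ is not valid as written: you derive stability of $\tsb_r$ under the root subgroups $u_{ij}(t)$ from ``the subalgebra property of $\tsb_r$'', i.e., from $x_i\partial_j\in\tsb_r$ plus nilpotency of $\ad(x_i\partial_j)$. In characteristic $p$ the conjugation action $\Ad\,u_{ij}(t)$ on $W(n)$ is \emph{not} the truncated exponential $\sum_{k<p}t^k\ad(x_i\partial_j)^k/k!$: using $\Ad\,u_{ij}(t)(f\partial_k)=\varphi(f)\,(\partial_k-t\delta_{ki}\partial_j)$ with $\varphi(x_j)=x_j+tx_i$, one finds for instance
\begin{align*}
\Ad\,u_{ij}(t)\bigl(x_j^{p-1}\partial_i\bigr)=(x_j+tx_i)^{p-1}(\partial_i-t\partial_j),
\end{align*}
which contains the term $-t^{p}x_i^{p-1}\partial_j$, invisible to the $\ad$-powers since $(\ad x_i\partial_j)^p=\ad\bigl((x_i\partial_j)^{[p]}\bigr)=0$. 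So invariance of $\tsb_r$ under $\ad(\bbb)$ does not by itself give invariance under $B_0$; the terms of order $\geq p$ in $t$ must be controlled. The repair is elementary but must be done by hand: check on the monomial basis of $\tsb_r$, in the three index regimes $i,j\leq n-r$; $i\leq n-r<j$; $n-r<i<j$, that every output monomial satisfies the exponent conditions defining $\tsb_0(x_1,\cdots,x_{n-r})$, $Q_r$ and $\tsb_r(x_{n-r+1},\cdots,x_n)$. Note the check must be global in $\tsb_r$, not summand-by-summand: e.g.\ when the substitution converts \emph{all} of the $\bf{w}$-weight of a $Q_r$-monomial $\bf{u}^{\bf{a}}\bf{w}^{\bf{b}}\partial_k$ into $\bf{u}$-weight, the output violates $|{\bf{b}}|>0$ but lands in the $\tsb_0$-block instead. (Your aside $\Lie(S_r)=\ggg_0\cap N_\ggg(\tsb_r)$ also tacitly assumes smoothness of the stabilizer scheme, but it carries no weight in the argument.)

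For connectedness your route is genuinely different from the paper's and is correct modulo the standard facts you invoke about $U$. The paper argues in one stroke: $T_0$ normalizes $U_r$; for $g\in U_r$ the morphism $t\mapsto\tau(t)g\tau(t)^{-1}$ with $\tau(t)=\diag(t,\cdots,t)$ has connected image containing $g$, and under the identification $G\cong\Aut(A(n))$ one sees $\lim_{t\rightarrow 0}\tau(t)g\tau(t)^{-1}=\id_{A(n)}$, so every $g$ lies in the identity component. Your argument instead filters $U_r$ by the subgroups $U^{[m]}$, identifies the subquotients of $U_r$ with closed subgroups of the vector groups $\ggg_{[m]}$, and uses the full scaling $\{s^m:s\in\bbk^\times\}=\bbk^\times$ to force these to be linear subspaces, correctly excluding the Frobenius-type connected subgroups peculiar to characteristic $p$; the extension induction is sound. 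This is longer and leans on facts the paper never states (that the symbol maps give algebraic-group isomorphisms $U^{[m]}/U^{[m+1]}\cong\ggg_{[m]}$, $\lambda$-equivariantly), but it buys finer information: the graded pieces of $U_r$ are genuinely linear, not just connected. Both proofs pivot on the same central cocharacter; the paper's contraction-limit argument is the shorter path.
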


\begin{proof} The first assertion  is immediate. For (2), it is easily verified  that $S_r=B_0\ltimes U_r$. It remains to prove the connectedness of $U_r$. We identify $B_0$ with the standard Borel subgroup of $\GL(n,\bbk)$ consisting of invertible upper-triangular matrices, which has a maximal torus group $T_0$ consisting of invertible diagonal matrices. Obviously, $T_0$ normalizes $U_r$.  Take $\tau(t)=\diag(t,t,\cdots,t)\in T_0$ for a given $t\in \bbk\backslash\{0\}$. Given a unipotent element $g\in U_r$, all $\tau(t)g\tau(t)^{-1}$ belong to the same connected component of $U_r$ as $g$ lies in. Note that $G$ can be identified with $\Aut(A(n))$. Under this identification, we can easily see that $\lim_{t\rightarrow 0}\tau(t)g\tau(t)^{-1}=\id_{A(n)}$. Hence $U_r$ is connected. We complete the proof.
\end{proof}

\subsection{}Set $\ggg=W(n)$ and $G=\Aut(W(n))$ described as in Theorem \ref{wilsonauto}. We use the notations $B_\gen$ for $\Stab_G(\tsb_n)$ instead of $S_n$.

\begin{prop}
In aid of identification between $G$ and $\Aut(A(n))$, $B_{\gen}$ can be regarded as a subgroup of the isotropy group of the following algebra flag
\begin{align*}
\bbk[x_1,\cdots,x_n]\supset \bbk[x_1,\cdots,x_{n-1}]\supset\cdots\supset \bbk[x_1].
\end{align*}
Furthermore, $B_{\gen}=B_0\ltimes U_n$, with
$$U_n=\{g\in U\mid (g-\id)(x_i)\in (\bbk+\bbk x_i)\bbk[x_1,\cdots,x_{i-1}], i=1,\cdots,n\}.$$
\end{prop}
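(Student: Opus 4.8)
The plan is to first reinterpret $\tsb_n$ coefficient-wise and then analyse $\Stab_G(\tsb_n)\cap U$ through that description. Writing $A_q=\bbk[x_1,\cdots,x_q]$ for the $q$th term of the flag (\ref{flag}) and $R_q:=(\bbk+\bbk x_q)\bbk[x_1,\cdots,x_{q-1}]$ for the elements of $A_q$ of degree $\leqslant 1$ in $x_q$, I would read off from the definition of $\tsb_n$ in \S\ref{Borels} that a derivation $D=\sum_q f_q\partial_q$ lies in $\tsb_n$ if and only if $f_q=D(x_q)\in R_q$ for every $q$: the monomials ${\bf x}^{{\bf a}(q)}\partial_q$ admitted in $\tsb_n$ are exactly those with ${\bf a}(q)$ supported on $x_1,\cdots,x_q$ and $a_q\leqslant 1$. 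A short induction on $q$, using that $D(A_{q-1})\subseteq A_{q-1}$ forces $D$ not to raise the $x_q$-degree, upgrades this to the intrinsic statement
\begin{align*}
\tsb_n=\{D\in W(n)\mid D(R_q)\subseteq R_q,\ q=1,\cdots,n\}.
\end{align*}
In particular, since $D(x_j)\in R_j\subseteq A_j$, every $D\in\tsb_n$ stabilises the flag (\ref{flag}).

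By the preceding proposition we already know $B_\gen=B_0\ltimes U_n$ with $U_n=U\cap\Stab_G(\tsb_n)$, so the task reduces to identifying $U_n$ with $U_n^0:=\{g\in U\mid (g-\id)(x_i)\in R_i,\ i=1,\cdots,n\}$. For the inclusion $U_n^0\subseteq U_n$ I would argue, just as above, that any $\varphi\in U_n^0$ satisfies $\varphi(R_q)=R_q$ for all $q$ (induction on $q$ gives $\varphi(R_q)\subseteq R_q$, and equality follows by finite-dimensionality; this simultaneously shows $U_n^0$ is a subgroup closed under inverses). Since the conjugation $\bar\varphi(D)=\varphi\circ D\circ\varphi^{-1}$ is then built from maps each preserving every $R_q$, the reformulation of $\tsb_n$ gives $\bar\varphi(\tsb_n)=\tsb_n$, i.e. $U_n^0\subseteq U_n$. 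This also settles Part (1): both $U_n^0$ and the upper-triangular group $B_0$ (whose elements satisfy $\varphi(x_j)\in\langle x_1,\cdots,x_j\rangle$) visibly preserve the flag (\ref{flag}).

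The substantive step is the reverse inclusion $U_n\subseteq U_n^0$, and here the main obstacle is to control an arbitrary $\varphi\in U$ stabilising $\tsb_n$, rather than one already adapted to the flag. I would run a leading-term induction on the standard grading. Writing $\varphi(x_i)=x_i+h_i$ with $h_i\in\mmm^2$ ($\mmm$ the maximal ideal of $A(n)$) and letting $d_0\geqslant 2$ be the lowest degree in which the homogeneous parts $h_i^{(d_0)}$ occur, I would set $\delta:=\sum_i h_i^{(d_0)}\partial_i\in W(n)_{[d_0-1]}$ and compute by the chain rule that, for homogeneous $D\in\tsb_n$ of degree $e$,
\begin{align*}
\bar\varphi(D)=D+[\delta,D]+(\text{terms of degree}>e+d_0-1).
\end{align*}
Since $\tsb_n$ is standard-graded (it contains $\ttt_0$, Corollary \ref{theSecondKeyLemma}), comparing the degree-$(e+d_0-1)$ components forces $[\delta,D]\in\tsb_n$ for all such $D$, hence $\delta\in N_{W(n)}(\tsb_n)=\tsb_n$ by self-normalisation of Borel subalgebras. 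By the coefficient description this means $h_i^{(d_0)}\in R_i$, so the automorphism $\varphi_1$ with $\varphi_1(x_i)=x_i+h_i^{(d_0)}$ lies in $U_n^0$; replacing $\varphi$ by $\varphi\circ\varphi_1^{-1}\in U\cap\Stab_G(\tsb_n)$ strictly raises $d_0$, and since degrees are bounded by $n(p-1)$ the process terminates, exhibiting $\varphi$ as a product of elements of $U_n^0$.

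The two points needing care are the first-order conjugation formula $\bar\varphi(D)\equiv D+[\delta,D]$ modulo higher degree (a chain-rule computation that deliberately avoids any use of $\exp$, which is what makes the argument robust in characteristic $p$) and the self-normalising property $N_{W(n)}(\tsb_n)=\tsb_n$, which is available because $\tsb_n$ is a Borel subalgebra. Unwinding $U_n=U_n^0$ then gives the displayed description of $U_n$, and combined with $B_0$ preserving (\ref{flag}) it yields the assertion that $B_\gen$ lies in the isotropy group of the flag.
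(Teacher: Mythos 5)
Your proof is correct, but it reaches the hard inclusion $U\cap\Stab_G(\tsb_n)\subseteq U_n$ by a genuinely different route than the paper. The paper argues by a single direct computation: writing $\sigma=\sigma_0\circ\sigma_1$ with $\sigma_0\in G_0$ and $\sigma_1\in U$, it notes that $\sigma_0$ stabilizes $\bbb$ (hence $\sigma_0\in B_0$), and then, assuming $\sigma_1(x_i)=x_i+x_jf_1+x_i^2f_2+f_3$ with $j>i$ and $f_1,f_2$ not both zero, it tests the single element $x_i\partial_i\in\tsb_n$: the conjugate $\overline\sigma_1(x_i\partial_i)=x_i\partial_i+(x_jf_1+x_i^2f_2+f_3)\partial_i+\ast$ falls outside $\tsb_n$, a contradiction. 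You instead (a) recast $\tsb_n$ intrinsically as $\{D\in W(n)\mid D(R_q)\subseteq R_q,\ q=1,\cdots,n\}$ --- a characterization the paper never states, and which makes both the containment $B_0\ltimes U_n\subseteq B_\gen$ (dismissed as ``clear'' in the paper) and the flag-stabilization assertion of part (1) genuinely transparent --- and (b) run a leading-term induction: the lowest-degree deviation $\delta$ of $\varphi\in U\cap\Stab_G(\tsb_n)$ from the identity normalizes the standard-graded subalgebra $\tsb_n$ via the first-order conjugation formula, hence lies in $\tsb_n$ by self-normalization of maximal solvable subalgebras, so one can divide off an element of your $U_n^0$ and iterate, with termination forced by the degree bound $n(p-1)$. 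What each approach buys: the paper's test-element computation is much shorter, but it leaves the expansion ``$+\ast$'' and the asserted shape of $\sigma_1^{-1}(x_i)$ unverified, whereas your successive-approximation scheme is checked degree by degree, uses the gradedness of $\tsb_n$ (Corollary \ref{theSecondKeyLemma}) and a characteristic-$p$-safe first-order formula in place of $\exp$, and would adapt with little change to the other standard Borel subalgebras $\tsb_r$. Two small points you should make explicit: the map $\varphi_1$ determined by $x_i\mapsto x_i+h_i^{(d_0)}$ really is an automorphism, since it stabilizes the maximal ideal of $A(n)$ and its Jacobian is the identity modulo that ideal (the criterion of \S\ref{Automorphisms}), with $\varphi_1(x_i)^p=0$ automatic in characteristic $p$; and $U_n^0$ is closed under composition as well as under inverses (for $\varphi,\psi\in U_n^0$ one has $(\varphi\psi)(x_i)-x_i=(\varphi(x_i)-x_i)+\varphi(\psi(x_i)-x_i)\in R_i$ because $\varphi(R_i)=R_i$), so your terminating factorization does place $\varphi$ in $U_n^0$.
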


\begin{proof} It is clear that $B_\gen$ contains both $B_0$ and $U_n$, thereby contains $B\ltimes U_n$. Next we prove the inverse inclusion.

For any given $\sigma\in B_\gen$, by Theorem \ref{wilsonauto} we can write $\sigma=\sigma_0\circ \sigma_1$ with $\sigma_0\in G_0$ and $\sigma_1\in U$.
So $\sigma_0$ stabilizes $\bbb\subset \tsb_n$,  which means $\sigma_0\in B_0\subset B_\gen$.
Thus $\sigma_1\in B_\gen$.  It remains to show $\sigma_1\in U_n$. Suppose $\sigma_1$ is not in $U_n$. Then there exists some $i$ such that $\sigma_1(x_i)=x_i+ x_jf_1+x_i^2f_2+f_3$ with $j>i$,  $f_1\in \bbk[x_1,\cdots,x_n], f_2\in \bbk[x_1,\cdots,\widehat{x_j}, \cdots,x_n], f_3\in (\bbk+\bbk x_i)\bbk[x_1,\cdots, \widehat{x_i},\cdots,\widehat{x_j},\cdots,x_n]$, and $f_1$, $f_2$ are not all zero.
  Here the notations $\widehat{x_i}, \widehat{x_j}$ mean omitting the indeterminants $x_i$, $x_j$ respectively.
Note that $\sigma_1\in U$. By a straightforward computation,  $\sigma_1^{-1}(x_i)=x_i+x_jg_1+x_i^{2}g_2+g_3$, where $g_1, g_2, g_3\in A(n)$ are subjected to the same conditions as the ones for $f_1$, $f_2$ and $f_3$.
Then $\overline\sigma_1(x_i\partial_i)=x_i\partial_i+(x_jf_1+x_i^2f_2+f_3)\partial_i+*$ does not fall in $\tsb_n$. This is a contradiction.
Hence $\sigma$ falls in $B_0\ltimes U_n$. We complete the proof.
\end{proof}

\section*{Additional Notation List}\label{list}

For the truncated polynomial $A(n)$ and its derivation algebra $W(n)$.
\begin{itemize}
\item $\bbp=\{0,1\cdots,p-1\}$.

\item ${\bf{\epsilon}}_i=(\delta_{i,1},\cdots,\delta_{i,n})\in \bbp^n$, $\delta_{i,j}=1$ if $i=j$, and $0$ otherwise.

\item $A(n)=\bbk[T_1,\cdots,T_n]\slash (T_1^p,\cdots, T_n^p)$.

\item $x_i=T_i+(T_1^p,\cdots,T_n^p)\in A(n)$, $y_i=x_i+1$,  and $z_i=x_i$ or $y_i$ according to the prescribed, $i=1,\cdots,n$.

\item $\overline \varphi$ is an automorphism in $W(n)$, induced from $\varphi \in \Aut(A(n))$.

\item ${\bf{x}}^{\bf{a}}=x_1^{a_1}\cdots x_n^{a_n}$; ${\bf{y}}^{\bf{a}}=y_1^{a_1}\cdots y_n^{a_n}$;  and ${\bf{z}}^{\bf{a}}=z_1^{a_1}\cdots z_n^{a_n}$ all in $A(n)$ for  ${\bf{a}}=(a_1,\cdots,a_n)\in \bbp^n$.  More verified forms of them can be found in Convention \ref{conventions}.

\end{itemize}

For maximal tori and related gradations:

\begin{itemize}
\item $\ttt_r=\sum_{i=1}^{n-r} \bbk x_i\partial_i + \sum_{i=(n-r)+1}^n (1+x_i)\partial_i$, $r= 1,\cdots,n$ (see \S\ref{2.3}).

\item $\frakh^{(\ttt_r)}_{[i]}$: $\bbz[\ttt_r]$-graded subspace, here and below $\frakh$ is a subalgebra of $W(n)$ containing $\ttt_r$.
\item $\frakh^{[\ttt_r]}_\alpha$:  $\ttt_r$-graded  subspace (root subspace).
\item   $\frakh_{[i]}$:  standard graded subspace (associated with $\ttt_0$), here $\frakh$  contains $\ttt_0$.

\end{itemize}

For  Borel subalgebras:

\begin{itemize}
\item $\bbb$: the standard Borel subalgebra in $W_{[0]}\cong \gl(n)$ (see \S\ref{2.6}).
\item  $L^{(i)}$ for a Lie algebra $L$:  the $i$-th derived ideal (see \S\ref{2.6}).
\item $\tsb_q$:  Borel subspaces of $W(n)$, $q=0,1,\cdots,n$ (see \S\ref{Borels}). All of them are finally proved to be Borel subalgebras, and to parameterize the iso-classes of homogeneous Borel subalgebras of $W(n)$.
 \item $\tsb_0$ and $\tsb_n$:  the nought-switched Borel subspace, and the full-switched Borel subspace of $W(n)$ respectively (see \S\ref{Borels}).
\item $\tsb_0(u_1,\cdots,u_q)$ and $\tsb_q(u_1,\cdots,u_q)$: the same meaning as above with respect to $u_1,\cdots,u_q$ (see Convention \ref{conventions}).
\item  $\textsf{r}(\cb)$: an invariant of homogeneous Borel conjugacies (see \S\ref{r B}).
\end{itemize}

\subsection*{Acknowledgements}  The author  thanks Hao Chang, Rolf Farnsteiner, Zongzhu Lin, Ke Ou, Xin Wen, Husileng Xiao, Yufeng Yao and Mengmeng Zhang for helpful discussions.

\end{document}